\newtheorem{theorem}{Theorem}[section]
\newtheorem{lemma}[theorem]{Lemma}
\newtheorem{corollary}[theorem]{Corollary}
\theoremstyle{definition}
\newcommand{\op}[1]{\textrm{\upshape #1}}
\newcommand{\join}{\vee}
\newcommand{\meet}{\wedge}
\newcommand{\la}{\langle}
\newcommand{\ra}{\rangle}
\newcommand{\alg}[1]{{\textbf{\upshape #1}}}  %
\newcommand{\vv}[1]{\mathsf {#1}}
\renewcommand{\a}{\alpha}
\renewcommand{\d}{\delta}
\renewcommand{\th}{\theta}
\renewcommand{\o}{\omega}
\newcommand{\sse}{\subseteq}
\newcommand{\app}{\approx}
\newcommand{\HH}{{\mathbf H}}  
\newcommand{\SU}{{\mathbf S}} 
\newcommand{\PP}{{\mathbf P}}   
\newcommand{\VV}{{\mathbf V}}   
\newcommand{\ib}{\item[$\bullet$]}
\newcommand{\Con}[1]{\operatorname{Con}(\alg #1)}
\newcommand{\imp}{\rightarrow}
\newcommand{\nneg}{\mathop{\sim}}
\def\square{\RIfM@\bgroup\else$\bgroup\aftergroup$\fi
  \vcenter{\hrule\hbox{\vrule\@height.6em\kern.6em\vrule}\hrule}\egroup}
\begin{document}
\title{Varieties of Bounded K-lattices\footnote{This work was supported by the Italian {\em National Group for Algebra and Geometric Structures} (GNSAGA--INDAM).}}
\author{Paolo Aglian\`{o}\\
DIISM\\
Universit\`a di Siena\\
Italy\\
agliano@live.com
\and
Miguel Andr\'es Marcos\\
Facultad de Ingenier\'ia Qu\'imica\\ CONICET - Universidad Nacional del Litoral \\
Argentina\\
mmarcos@santafe-conicet.gov.ar
}
\date{}
\maketitle

\begin{abstract}
In this paper we continue to study varieties of K-lattices, focusing on their bounded versions. These (bounded) commutative residuated lattices arise from a specific kind of construction: the {\em twist-product} of a lattice. Twist-products were first considered by Kalman in 1958 to deal with order involutions on plain lattices, but the extension of this concept to residuated lattices has attracted some attention lately.
\end{abstract}

\section*{Introduction}

This paper is a natural continuation of the investigation in \cite{AglianoMarcos2020a} on varieties of K-lattices. These are commutative residuated lattices coming from a specific kind of construction: the {\em twist-product}.

The idea of considering the twist-product construction  goes back to Kalman's paper \cite{Kalman1958}, where only pure lattices were considered. The extension of this concept to residuated lattices is due to Tsinakis and Wille \cite{TsinakisWille2006}; they considered the twist-product of a residuated lattice $\alg L$ having a greatest element $\top$  such that the element $(\top,1)$ ($1$ the monoid identity) is the dualizing element relative to the natural involution. In other words for all $(x,y) \in \alg L \times \alg L^\partial$, $\nneg(x,y) = (x,y) \imp (\top,1)$ and so $((x,y) \imp (\top,1)) \imp (\top,1) = (x,y)$.

K-lattices were introduced by \cite{BusanicheCignoli2014} and they are residuated lattices that are subalgebras of the algebras obtained by applying the Tsinakis-Wille construction to {\em integral} commutative residuated lattices; in this case $\top=1$ and the dualizing pair is $(1,1)$.

The lattice of subvarieties of K-lattices have been investigated in \cite{AglianoMarcos2020a} and this paper is mostly based on the results therein, but here we choose a different starting point, as we consider twist-products coming from {\em bounded} commutative residuated lattices.

We will not try to develop a theory of bounded K-lattices, paralleling the one in \cite{AglianoMarcos2020a} and the reason is that many parts would be just a repetition of our previous work. In this paper we will focus on the differences in the various parts of the theory, quoting freely \cite{AglianoMarcos2020a} for the parts that are clearly identical. Moreover we will use the same notation for residuated lattices that are lower bounded and their {\em bounded} version, letting the context clear any possible ambiguity.

The paper is organized as follows. In Section \ref{S1.prelim}, we enumerate all general results needed to tackle the problem at hand, as well as those results from K-lattices that will be needed. In Sections \ref{S2.atomsandcovers} we describe the lower part of the lattice of subvarieties of bounded K-lattices, specifically we show that there is only one atom, describe (up to a certain extent) all finitely generated covers of the atom, and mention some infinite covers. Finally in Section \ref{S3.examples} we consider some special subvarieties of bounded K-lattices, and study the lattice of subvarieties for those cases.

\section{Preliminaries}\label{S1.prelim}

As in the case of \cite{AglianoMarcos2020a}, we first mention the classical result by B. J\'onsson.

\begin{lemma}\label{jonsson} (J\'onsson's Lemma) Let $\vv K$ be a class of algebras such that $\VV(\vv K)$ is congruence distributive; then
\begin{enumerate}
\item if $\alg A$ is a finitely subdirectly irreducible algebra in $\VV(\vv K)$, then $\alg A \in \HH\SU\PP_u(\vv K)$;
\item if $\alg A,\alg B$ are finite subdirectly irreducible algebras in $\VV(\vv K)$ then $\VV(\alg A) = \VV(\alg B)$ if and only if $\alg A\cong\alg B$.
\end{enumerate}
\end{lemma}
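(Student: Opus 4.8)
This is J\'onsson's classical lemma, so the plan is to run the standard ultraproduct argument, paying attention to the single place where congruence distributivity is essential. For (1), I would start from Tarski's theorem $\VV(\vv K)=\HH\SU\PP(\vv K)$, so that a finitely subdirectly irreducible $\alg A\in\VV(\vv K)$ can be written as $\alg A\cong\alg B/\theta$ with $\alg B$ a subalgebra of a product $\prod_{i\in I}\alg C_i$, each $\alg C_i\in\vv K$. Writing $\rho_i\in\Con B$ for the restriction to $\alg B$ of the kernel of the $i$-th projection and, for $J\subseteq I$, $\rho_J=\bigcap_{i\in J}\rho_i$, one has $\rho_I=\Delta$, $\rho_\emptyset=\nabla$, $\rho_{J\cup K}=\rho_J\cap\rho_K$, and $\bigcap_{i\in I}\rho_i=\Delta\subseteq\theta$. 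The goal is to produce an ultrafilter $U$ on $I$ with $\theta_U:=\{(a,b)\in B^2:\{i:a_i=b_i\}\in U\}\subseteq\theta$; since $\theta_U=\bigcup_{J\in U}\rho_J$, the quotient $\alg A\cong\alg B/\theta$ is then a homomorphic image of $\alg B/\theta_U$, and $\alg B/\theta_U$ embeds into the ultraproduct $(\prod_{i\in I}\alg C_i)/U\in\PP_u(\vv K)$, giving $\alg A\in\HH\SU\PP_u(\vv K)$.

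To obtain $U$, the key preliminary observation is that, because $\alg B\in\VV(\vv K)$, the lattice $\Con B$ is distributive, and because $\alg B/\theta\cong\alg A$ is finitely subdirectly irreducible, $\theta$ is meet-prime in $\Con B$: $\alpha\cap\beta\subseteq\theta$ implies $\alpha\subseteq\theta$ or $\beta\subseteq\theta$ (translate the defining condition of finite subdirect irreducibility along $\Con A\cong[\theta,\nabla]$ and use distributivity). Then I would check that $\mathcal H=\{J\subseteq I:\rho_J\not\subseteq\theta\}$ is a proper order ideal of $\mathcal P(I)$: it is downward closed since a larger index set gives a smaller $\rho_J$; it is closed under finite unions precisely because $\theta$ is meet-prime and $\rho_{J\cup K}=\rho_J\cap\rho_K$; and it is proper since $\rho_I=\Delta\subseteq\theta$. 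Dualizing $\mathcal H$ yields a proper filter on $I$, which extends to an ultrafilter $U$; if $J\in U$ then $J\notin\mathcal H$ (else $J$ and $I\setminus J$ would both lie in $U$), so $\rho_J\subseteq\theta$, and hence $\theta_U=\bigcup_{J\in U}\rho_J\subseteq\theta$.

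For (2), the implication $\alg A\cong\alg B\Rightarrow\VV(\alg A)=\VV(\alg B)$ is immediate, so assume $\alg A,\alg B$ are finite and subdirectly irreducible with $\VV(\alg A)=\VV(\alg B)$. This common variety, being a subvariety of a congruence distributive variety, is itself congruence distributive, so (1) applies inside $\VV(\alg B)=\VV(\{\alg B\})$ and gives $\alg A\in\HH\SU\PP_u(\{\alg B\})$. Since $\alg B$ is finite, every ultrapower of $\alg B$ is isomorphic to $\alg B$, so $\PP_u(\{\alg B\})=\II(\{\alg B\})$ and therefore $\alg A\in\HH\SU(\{\alg B\})$, in particular $|A|\le|B|$. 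By symmetry $|B|\le|A|$, so $|A|=|B|$, which forces the relevant subalgebra of $\alg B$ to be all of $\alg B$ and the surjection onto $\alg A$ to be a bijection, i.e.\ $\alg A\cong\alg B$.

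The only genuine obstacle is the combinatorial heart of (1): recognizing that congruence distributivity is used exactly to make $\theta$ meet-prime, which is precisely what makes $\mathcal H$ closed under finite unions and hence dualizable to an ultrafilter. Once that is isolated, the remaining ingredients --- Tarski's $\HH\SU\PP$ theorem, the identity $\theta_U=\bigcup_{J\in U}\rho_J$, the embedding of $\alg B/\theta_U$ into the ultraproduct, and the finiteness argument in (2) --- are routine.
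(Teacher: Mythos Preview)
Your argument is correct and is essentially the standard proof of J\'onsson's Lemma. The paper itself does not give a proof at all: the lemma is merely quoted as ``the classical result by B.\ J\'onsson'' and used as a black box throughout, so there is nothing in the paper to compare your argument against.

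One small remark on presentation: the phrase ``dualizing $\mathcal H$ yields a proper filter'' could be misread as taking the set-theoretic complement $\mathcal P(I)\setminus\mathcal H$, which is \emph{not} in general closed under finite intersections. What you actually use (and what your parenthetical justification shows) is the filter $\{I\setminus J : J\in\mathcal H\}$ obtained via the complementation anti-automorphism of $\mathcal P(I)$; extending that to an ultrafilter $U$ and arguing that $J\in U$, $J\in\mathcal H$ would force $I\setminus J\in U$ is exactly right. It would be worth making this explicit so the reader does not stumble. Everything else, including the use of distributivity to promote finite meet-irreducibility of $\theta$ to meet-primeness, and the finiteness/cardinality argument in part (2), is clean.
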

In particular if $\vv K$ is a finite class of finite algebras and $\VV(\vv K)$ is congruence distributive, then all the finitely subdirectly irreducible algebras in $\VV(\vv K)$ are in $\HH\SU(\vv K)$.

\medskip

A {\bf commutative residuated lattice} is a structure $\la A,\join,\meet,\cdot,\imp,1\ra$ such that
\begin{enumerate}
\item $\la A,\join,\meet\ra$ is a lattice;
\item $\la A,\cdot,1\ra$ is a commutative monoid;
\item $(\cdot,\imp)$ form a residuated pair w.r.t. the ordering, i.e. for all $a,b,c \in A$
$$
ab \le c\qquad\text{if and only if}\qquad a \le b \imp c.
$$
\end{enumerate}

We denote this variety by $\mathsf{CRL}$; if $1$ is the largest element in the ordering the lattice is said to be {\bf integral}. Commutative and integral residuated lattices form a variety which we call $\mathsf{CIRL}$. Note that algebras in $\mathsf{CRL}$ are congruence distributive, since they have a lattice reduct.

If we enlarge the type of $\mathsf{CIRL}$ with a constant $0$ and we add the axiom $0 \le x$ then we get the variety of {\bf bounded} commutative residuated lattices, denote by $\mathsf{BCRL}$ \footnote{We follow the common usage that bounded implies integral, see \cite{GJKO}.}. Any  finite algebra in $\mathsf{CIRL}$ is naturally bounded and most subvarieties of $\mathsf{CIRL}$ have their bounded version. In $\mathsf{BCRL}$ we have a natural negation given by $\neg x = x\imp 0$.

\medskip

There are two equations that result in interesting subvarieties of $\mathsf{CIRL}$ and $\mathsf{BCRL}$
\begin{align}
&(x \imp y) \join (y \imp x) \app 1.\tag{P}\\
&x(x \imp y) \app y(y \imp x); \tag{D}
\end{align}

The subvariety of $\mathsf{BCRL}$ given by equation (P) is the variety of $\mathsf{MTL}$-algebras, and the one given by equations (P) and (D) is the variety of $\mathsf{BL}$-algebras.

Algebras in $\mathsf{CIRL}$ satisfying (D) are called hoops. For hoops usually there is a convention when dealing with their bounded counterparts: we use the name {\em algebra} to identify the bounded version; so we talk about Wajsberg algebras (or $\mathsf{MV}$-algebras), product algebras, G\"odel algebras and of course the bounded version of generalized Boolean algebras are just Boolean algebras. While the term {\em basic algebra} is used sometimes in the literature the most common name for the bounded version of basic hoops is {\bf $\mathsf{BL}$-algebras} and there is no standard name for the bounded version of hoops; in this paper we call them {\bf $\mathsf{HL}$-algebras}. The theory of the bounded version of any variety is almost equal to the theory of the unbounded version.  There is one main difference though: since all the algebras are bounded they cannot be cancellative, so there is no variety corresponding to cancellative hoops. Of course cancellation comes back through the window as soon as we start talking about ordinal sums (see the definition below); in fact it is evident that any ordinal sum in which the first component is  an algebra in $\mathsf{BCRL}$ and all the  others are algebras in $\mathsf{CIRL}$ is an algebra in $\mathsf{BCRL}$.  Another general fact is the following: adding a constant changes the subalgebras of an algebra, in the sense that one has less subalgebras to consider. This usually simplifies the lattice of subvarieties: it is well known (and an easy exercise) that the lattice of subvarieties of product algebras is the three element chain (and the atom is the variety of Boolean algebras). The reader can compare this with the lattice of subvarieties of product hoops described in \cite{AFM}.

The connection however is strong; it can be proved that if $\vv V$ is a subvariety of $\mathsf{BCRL}$ then the class $\SU^0(\vv V)$ of its zero-free subreducts (that is, subalgebras as $\mathsf{CIRL}$ of the zero-free reducts of algebras in $\vv V$) is a subvariety of $\mathsf{CIRL}$ (see \cite{AFM}, where the result is stated and proved for hoops, but the divisibility is never used in the proof). But there are also more subtle differences; it is tempting to conjecture that if a sets of zero-free equations axiomatize a subvariety $\vv V$ of $\mathsf{BCRL}$ then the variety $\SU^0(\vv V)$ of its zero-free subreducts is axiomatized by the same set of equations. Though this happens very often it does not always happen, as shown in \cite{AglianoPanti1999} p. 372.

\medskip

A powerful tool for constructing is the ordinal sum. If $\alg A\in \mathsf{BCRL}$ and $\alg B \in \mathsf{CIRL}$ we  put a structure on the set  $A\setminus \{1\} \cup B\setminus\{1\} \cup \{1\}$. The ordering is given by
$$
a \le b \quad\text{if and only if} \quad \left\{
             \begin{array}{l}
               \hbox{$b=1$, or} \\
               \hbox{$a \in A_0\setminus\{1\}$ and $b \in A_1\setminus\{1\}$ or} \\
               \hbox{$a,b \in A_i\setminus\{1\}$ and $a \le_{A_i} b$, $i=0,1$.}
             \end{array}
           \right.
$$
and we define
\begin{align*}
&a \imp b = \left\{
              \begin{array}{ll}
                b, &\hbox{if $a=1$;} \\
                1, &\hbox{if $b=1$;} \\
                a \imp_{A_i} b, &\hbox{if $a,b \in A_i\setminus\{1\}$ and $a \le_{A_i} b$, $i=0,1$.}
              \end{array}
            \right.\\
&a\cdot b = \left\{
              \begin{array}{ll}
                a, & \hbox{if $a \in A_0\setminus\{1\}$ and $b \in A_1$;} \\
                b, & \hbox{if $a \in A_1$ and $b\in A_0\setminus\{1\}$}\\
                a  \cdot_{A_i} b, &\hbox{if $a,b \in A_i\setminus\{1\}$ and $a \le_{A_i} b$, $i=0,1$.}
              \end{array}
            \right.
\end{align*}
If we call $\alg A \oplus \alg B$ the resulting structure, then it is easily checked that $\alg A \oplus \alg B$ is a semilattice ordered integral and commutative residuated monoid (and so the ordinal sum of two hoops in the sense of \cite{BlokFerr2000} always exists). It might not be a residuated lattice though and  the reason is that if $1_{A}$ is not join irreducible and $\alg B$ is not bounded we run into trouble. In fact if $a,b \in A\setminus\{1\}$ and $a \join_{A} b =1_{A}$ then  the upper bounds of $\{a,b\}$ all lie in $B$; and since $B$ is not bounded there can be no least upper bound of $\{a,b\}$ in $\alg A \oplus \alg B$ and the ordering cannot be a lattice ordering. However this is the only case we have to worry about; if $1_{A}$ is join irreducible, then the problem disappears, and if $1_{A}$ is not join irreducible but $\alg B$ is bounded, say by $u$, then we can define
$$
a \join b = \left\{
              \begin{array}{ll}
                a, & \hbox{$a \in B$ and $b \in A$;} \\
                b, & \hbox{$a \in A$ and $b \in B$;} \\
                a \join_{B} b, & \hbox{if $a,b \in B$;} \\
                a \join_{A} b, & \hbox{if $a,b \in A$ and $a \join_{A} b < 1$;}\\
                u, & \hbox{if $a,b \in A$ and $a \join_{A} b = 1$;}\\
              \end{array}
            \right.
$$

We will call $\alg A \oplus \alg B$ the {\bf ordinal sum} and we will say that the ordinal sum {\bf exists} if $\alg A \oplus \alg B \in \mathsf{BCRL}$. We will now list some examples of varieties in which ordinal sums play a special role.

It is easy to see that the 3-element G\"odel chain $\alg G_3$ is isomorphic to $\alg 2\oplus\alg 2$. If we consider $\mathsf{GA}$ the variety of G\"odel algebras, it can be shown that $\mathsf{GA}$ is a locally finite variety and it is generated by all the finite G\"odel chains $\alg G_n = \alg 2 \oplus \ldots \oplus \alg 2$ ($n - 1$ summands).

More generally, algebras of the form $\alg 2 \oplus \alg D$ for some $\alg D\in \mathsf{CIRL}$ are exactly the directly idecomposable Stonean algebras (see Section \ref{Stone}).

On the other hand, subdirectly irreducible $\mathsf{BL}$-algebras are of the form $\alg A\oplus \alg D$, with $\alg A$ a (bounded) Wajsberg chain and $\alg D$ a basic hoop.

\medskip

A large class of algebras in $\mathsf{BCRL}$ can be obtained as follows. For any $\alg A \in \mathsf{CIRL}$ we construct a new algebra, called the {\bf connected rotation} of $\alg A$ in the following way; if $\alg \L_2 = \{0,\frac{1}{2},1\}$ then the universe is
$$
A^{\d_2}= (\{0\} \times A) \cup \left\{\left(\frac{1}{2},1\right)\right\} \cup (\{1\} \times A)
$$
and the operations are defined as

\begin{align*}
(x,a) \join (y,b) &= \left\{
                       \begin{array}{ll}
                         (1,a \join b) & \hbox{if $x=y=1$;} \\
                         (0,a \meet b), & \hbox{if $x=y=0$;} \\
                         (y,b), & \hbox{if $x <y$.}
                       \end{array}
                     \right.\\
(x,a) \meet (y,b) &= \left\{
                       \begin{array}{ll}
                         (1,a \meet b) & \hbox{if $x=y=1$;} \\
                         (0,a \join b), & \hbox{if $x=y=0$;} \\
                         (y,b), & \hbox{if $x <y$.}
                       \end{array}
                     \right.\\
(x,a)(y,b) &= \left\{
                \begin{array}{ll}
                  (1,ab), & \hbox{if $x=y=1$;} \\
                  (xy,1), & \hbox{if $x,y \ne 1$;} \\
                  (y,b), & \hbox{if $x=1$ and $y=\frac{1}{2}$;} \\
                  (y,a \imp b), & \hbox{if $x=1$ and $y=0$.}
                \end{array}
              \right.\\
(x,a) \imp (y,b) &= \left\{
                      \begin{array}{ll}
                        (1,a \imp b), & \hbox{if $x=y=1$;} \\
                        (1,b \imp a), & \hbox{if $x=y=0$;} \\
                        (0,ab), & \hbox{if $x=1$ and $y=0$;} \\
                        (1,1), & \hbox{if $x <y$;} \\
                        (x \imp y,1), & \hbox{otherwise.}
                      \end{array}
                    \right.
\end{align*}

It is not hard to check that $\alg A^{\d_2} \in \mathsf{BCRL}$  and it is also involutive; moreover the set $A^{\d_2} \setminus \left\{\left(\frac{1}{2},1\right)\right\}$ is a subalgebra of $\alg A^{\d_2}$ that we will call the {\bf disconnected rotation} of $\alg A$ and denote by $\alg A^{\d_1}$. Connected and disconnected rotations of algebras in $\mathsf{CIRL}$ have been studied in \cite{BMU2018}, \cite{AFU2017} and \cite{AglianoUgolini2019a}.

On one hand, $\alg 2^{\d_1}$, the disconnected rotation of the 2-element Boolean algebra, is the 4-element nilpotent minimum chain $\alg N_4$. In general, the variety of nilpotent minimum algebras is generated by connected and disconnected rotations of finite G\"odel chains (see Section \ref{nilpotent}).

On the other hand, the Chang algebra $\alg \L_1^\o$ (see Section \ref{mv}) is isomorphic to the disconnected rotation of the cancellative hoop $\alg C_\omega$.

\medskip

Let $\alg A \in \mathsf{CIRL}$; the {\bf K-expansion} $K(\alg A)$ of $\alg A$ is a structure whose universe is $A\times A$ and the operations are defined as:
\begin{align*}
&\la a,b\ra \join \la c,d\ra := \la a \join c, b \meet d\ra \\
&\la a,b\ra \meet \la c,d\ra := \la a \meet  c, b \join d\ra\\
&\la a,b\ra \la c,d\ra := \la ac,( a \imp d) \meet (c \imp b)\ra\\
&\la a,b\ra \imp \la c,d\ra := \la (a \imp c) \meet (d \imp b),ad\ra.
\end{align*}

\begin{theorem}\cite{BusanicheCignoli2014} For every $\alg A \in \mathsf{CIRL}$, $K(\alg A)$ is a commutative residuated lattice that is also
\begin{enumerate}
\item  $1$-involutive;
\item {\bf $1$-distributive}, i.e. it satisfies both distributive laws for lattices whenever at least one of the elements is equal to $1$.
\item If we set $\nneg x = x \imp 1$ then it satisfies the equations
\begin{align*}
& xy \meet 1 \app (x\meet 1)(y\meet 1) \tag{K1}\\
&((x \meet 1) \imp y) \meet ((\nneg y \meet 1) \imp \nneg x) \app x \imp y.  \tag{K2}
\end{align*}
\end{enumerate}
\end{theorem}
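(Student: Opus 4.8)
The plan is to verify the four assertions by direct computation in $\alg A$, keeping track at each stage of which identities of $\mathsf{CIRL}$ are actually needed. First I would check that $K(\alg A)$ is a member of $\mathsf{CRL}$. The lattice reduct is precisely the product $\alg A\times\alg A^\partial$, hence automatically a lattice; commutativity of the monoid product is immediate from the symmetry of its defining formula, and $\la 1,1\ra$ is the monoid identity because integrality gives $a\imp 1=1$ and $1\imp b=b$. The only laborious point at this stage is associativity: expanding both $(\la a,b\ra\la c,d\ra)\la e,f\ra$ and $\la a,b\ra(\la c,d\ra\la e,f\ra)$ and using the import--export law $x\imp(y\imp z)\app xy\imp z$ together with the right-distributivity of $\imp$ over $\meet$, I expect both products to collapse to $\la ace,\, (ac\imp f)\meet(ae\imp d)\meet(ce\imp b)\ra$. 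For residuation I would unravel $\la a,b\ra\la c,d\ra\le\la e,f\ra$ in the product order into the system $ac\le e$, $af\le d$, $cf\le b$ (the last clause via residuation in $\alg A$), and then check that $\la a,b\ra\le\la c,d\ra\imp\la e,f\ra$ unravels into exactly the same system.

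For 1-involutivity I would compute $\nneg\la a,b\ra=\la a,b\ra\imp\la 1,1\ra$ and note that integrality collapses it to the coordinate swap $\la b,a\ra$; hence $\nneg$ is an involution and $\la 1,1\ra$ is a dualizing element. For 1-distributivity the decisive observation is that $\la a,b\ra\meet\la 1,1\ra=\la a,1\ra$ and $\la a,b\ra\join\la 1,1\ra=\la 1,b\ra$, that is, meeting (respectively joining) with $\la 1,1\ra$ forces the second (respectively first) coordinate to $1$; once that collapse has occurred, every instance of the distributive laws in which one of the three arguments equals $\la 1,1\ra$ reduces to an absorption identity valid in every lattice $\alg A$, with no distributivity hypothesis.

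For (K1) and (K2) I would simply substitute the formulas above into the definitions. For (K1), $xy\meet\la 1,1\ra=\la ac,1\ra$ since $ac\le 1$, while $(x\meet 1)(y\meet 1)=\la a,1\ra\la c,1\ra=\la ac,1\ra$ as well. For (K2), from $x\meet\la 1,1\ra=\la a,1\ra$ one obtains $(x\meet 1)\imp y=\la a\imp c,\, ad\ra$, and from $\nneg y=\la d,c\ra$, $\nneg y\meet\la 1,1\ra=\la d,1\ra$ and $\nneg x=\la b,a\ra$ one obtains $(\nneg y\meet 1)\imp\nneg x=\la d\imp b,\, da\ra$; the meet of these two equals $\la (a\imp c)\meet(d\imp b),\, ad\ra$, which is exactly $\la a,b\ra\imp\la c,d\ra=x\imp y$.

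I do not expect a deep obstacle: the statement is a computation. The two steps that require genuine care are the associativity of the monoid product and the residuation law, both of which rest on the import--export law (and right-distributivity of $\imp$ over $\meet$) in $\mathsf{CIRL}$; once the formula $\nneg\la a,b\ra=\la b,a\ra$ is in hand, the remaining items are essentially mechanical.
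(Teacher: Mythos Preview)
Your proposal is correct. The paper itself does not supply a proof of this theorem; it is quoted from \cite{BusanicheCignoli2014} and used as background. The direct componentwise verification you outline is precisely the standard argument: the associativity and residuation checks reduce, via import--export and right-distributivity of $\imp$ over $\meet$, to the symmetric expressions you wrote; the computation $\nneg\la a,b\ra=\la b,a\ra$ immediately gives $1$-involutivity; the $1$-distributivity cases collapse to lattice absorption once one coordinate is forced to $1$; and (K1), (K2) are one-line substitutions. There is nothing to add.
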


A {\bf Kalman lattice} or just {\bf K-lattice} is a a commutative integral residuated lattice that is $1$-involutive, $1$-distributive and satisfies (K1) and (K2); the variety of K-lattices is denoted by $\mathsf{KL}$.

\medskip

If we start with an algebra $\alg A\in\mathsf{BCRL}$, then $K(\alg A)$ will have $(0,1)$ as a lower bound, and $(1,0)=\sim(0,1)$ as an upper bound. Thus we define \textbf{Bounded K-lattices} as the variety of K-lattices with an extra constant $0$ and the axiom $0 \le x$ (observe that they will also be upper bounded by $\top = \sim 0$). We will denote the variety of bounded K-lattices by $\mathsf{BKL}$.

Most of the basic definitions about K-lattices in \cite{AglianoMarcos2020a} can be formulated in the bounded case as well. In particular if $\alg B \in \mathsf{BKL}$ we denote by $\alg B^-$ the algebra whose universe is
$\{b \in B: b \le 1\}$ endowed with same lattice operations, constants $0,1$ and multiplication as $\alg B$ and a new implication $a \imp^- b = a \imp b \meet 1$. It is clear that $\alg B^- \in \mathsf{BCRL}$ and moreover $\Con B$ and
$\op{Con}(\alg B^-)$ are isomorphic via the mappings
$$
\a \longmapsto \a^-= \a \cap (B^- \times B^-) \qquad  \th \longmapsto \op{Cg}_\alg B(\th).
$$
In particular for each $\a \in \Con B$ there is an $\a^- \in \op{Con}(\alg B^-)$ such that $(\alg B/\a)^- \cong \alg B^-/\a^-$ (see \cite{HartRafterTsinakis2002} for details. Moreover we have two lemmas whose proofs are identical
to the analogous one for the unbounded case (the first appears in \cite{AglianoMarcos2020a} and the second    in \cite{BusanicheCignoli2014}).

\begin{lemma}\label{subdir}  Let $\alg A,\alg B  \in \mathsf{BCRL}$ and let $(\alg A _i)_{i\in I} \sse \mathsf{BCRL}$. Then
\begin{enumerate}
\item if $\alg A \le \alg B$, then $\alg A^- \le \alg B^-$;
\item $(\Pi_{i \in I} \alg A_i)^- \cong \Pi_{i \in I} \alg A^-$.
\end{enumerate}
\end{lemma}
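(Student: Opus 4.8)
The plan is to treat both parts as essentially diagram-chasing on universes, using the description of $\alg B^-$ as the set $\{b\in B: b\le 1\}$ with the inherited lattice operations, constants $0,1$, and multiplication, and the modified implication $a\imp^- b = (a\imp b)\meet 1$. Since all four structures in the statement live in $\mathsf{BCRL}$ and the maps involved are the obvious restrictions/projections, the only thing to verify is that these restrictions/projections are homomorphisms and behave well with respect to the $(-)^-$ construction.

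For part (1): suppose $\alg A \le \alg B$. First I would observe that $A^- = A \cap \{b\in B : b\le 1\}$, and since $1 \in A$ (the monoid identity is part of the signature) and $0\in A$ (the bottom constant is in the signature), we have $A^- = \{a\in A : a\le_{\alg A} 1\}$, which agrees with the ambient order because $\alg A$ is a sublattice of $\alg B$. Then $A^-$ is closed under the operations of $\alg B^-$: it is closed under $\join,\meet$ because $\alg A$ is, it contains $0,1$, it is closed under $\cdot$ because $\alg A$ is and because $a,b\le 1$ implies $ab\le 1$ by integrality, and it is closed under $\imp^-$ since $a\imp^- b = (a\imp b)\meet 1$ and $\alg A$ is closed under both $\imp$ and $\meet$. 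Finally the operations of $\alg B^-$ restricted to $A^-$ are exactly the operations of $\alg A^-$, because the underlying $\imp$, $\cdot$, $\join$, $\meet$ of $\alg B$ restrict to those of $\alg A$, and $\imp^-$ is defined by the same formula in both. Hence $\alg A^- \le \alg B^-$.

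For part (2): let $\alg C = \Pi_{i\in I}\alg A_i$ with projections $\pi_i$. I would show the map $\f\colon C^- \to \Pi_{i\in I}\alg A_i^-$ given by $\f(c) = (\pi_i(c))_{i\in I}$ is an isomorphism. It is well-defined because $c\le 1$ in $\alg C$ iff $\pi_i(c)\le 1$ in $\alg A_i$ for all $i$, the order on a product being computed coordinatewise; this same equivalence shows $\f$ is onto. Injectivity is immediate since the $\pi_i$ jointly separate points. That $\f$ preserves $\join,\meet,\cdot,0,1$ is just the fact that these are computed coordinatewise in both $\alg C^-$ and the product $\Pi\alg A_i^-$ (they agree with the ambient product operations). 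For $\imp^-$ one checks $\f(a\imp^-_{\alg C^-} b) = \f((a\imp_{\alg C} b)\meet 1)$ has $i$-th coordinate $(\pi_i(a)\imp_{\alg A_i}\pi_i(b))\meet 1 = \pi_i(a)\imp^-_{\alg A_i^-}\pi_i(b)$, which is the $i$-th coordinate of $\f(a)\imp^-\f(b)$ in $\Pi\alg A_i^-$; here the key point is that $\meet$ with $1$ commutes with the projections. So $\f$ is the desired isomorphism, and I would write $(\Pi_{i\in I}\alg A_i)^- \cong \Pi_{i\in I}\alg A_i^-$ (correcting the evident typo in the statement, where the product on the right should range over $i$).

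There is no real obstacle here: the whole proof is the remark, already made in the excerpt, that the proofs are "identical to the analogous one for the unbounded case," so the only genuine content is checking that the extra constant $0$ is carried along (which it is, being in the signature and automatically in every subalgebra and in every product coordinatewise) and that the new implication $\imp^-$, being a composite of operations that are themselves preserved, is preserved. The mildly delicate point worth stating explicitly is the equivalence $c\le 1 \iff \forall i\, \pi_i(c)\le 1$ used for well-definedness and surjectivity in part (2), and the analogous $a\le_{\alg B} 1 \iff a\le_{\alg A} 1$ for $a\in A$ used in part (1); both are immediate from the fact that sublattices and products compute the order the expected way. Accordingly I would keep the proof to a few lines, citing \cite{HartRafterTsinakis2002} and \cite{AglianoMarcos2020a} for the routine verifications and spelling out only these order-theoretic compatibilities.
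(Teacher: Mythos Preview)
Your proposal is correct and matches the paper's approach exactly: the paper gives no proof at all, merely stating that the argument is ``identical to the analogous one for the unbounded case'' in \cite{AglianoMarcos2020a}, and your write-up is precisely that routine verification spelled out (and your closing remark to keep it to a few lines with citations is literally what the paper does). Note, incidentally, that the hypothesis $\alg A,\alg B\in\mathsf{BCRL}$ is a typo for $\mathsf{BKL}$---in $\mathsf{BCRL}$ every algebra is integral so $(-)^-$ would be the identity---but your argument is written for the intended $\mathsf{BKL}$ setting and is fine there.
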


\begin{lemma}\cite{BusanicheCignoli2014}\label{inclusions} If $\alg L \in \mathsf{BCRL}$ then $\alg L \cong K(\alg L)^-$. If $\alg A \in \mathsf{BKL}$, then $f:a \longmapsto (a \meet 1, \nneg a \meet 1)$ is an embedding of $\alg A $ in $K(\alg A^-)$.
\end{lemma}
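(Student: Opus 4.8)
The plan is to follow the twist-product calculations in \cite{BusanicheCignoli2014} essentially verbatim; the only genuinely new point is that the constant $0$ behaves correctly, and since $0$ is simply the bottom element this costs almost nothing.

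\emph{First statement.} For $\alg L\in\mathsf{BCRL}$ the algebra $\alg L$ is integral, so in $K(\alg L)$ one has $\la a,b\ra\le\la 1,1\ra$ iff $a\le 1$ and $b\ge 1$, i.e. iff $b=1$; hence $K(\alg L)^-=\{\la a,1\ra: a\in L\}$. I would show that $\f\colon a\longmapsto\la a,1\ra$ is the desired isomorphism. Bijectivity is clear, and using integrality of $\alg L$ (so that $a\imp 1=1$ and $a\imp b\le 1$ for all $a,b$) a direct check from the defining clauses of $K(\alg L)$ gives $\f(a\join b)=\f(a)\join\f(b)$, likewise for $\meet$ and $\cdot$, and $\f(a\imp b)=\la a\imp b,1\ra=(\la a,1\ra\imp\la b,1\ra)\meet\la 1,1\ra=\f(a)\imp^{-}\f(b)$; finally $\f(1)=\la 1,1\ra$ is the monoid identity of $K(\alg L)$ and $\f(0)=\la 0,1\ra$ is its bottom, so $\f$ respects both constants.

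\emph{Second statement, homomorphism part.} Write $\nneg x=x\imp 1$. From $1$-involutivity ($\nneg\nneg x=x$) together with the antitonicity of $\imp$ in its first argument one gets that $\nneg$ is an order-reversing involution, hence a lattice anti-automorphism: $\nneg(x\join y)=\nneg x\meet\nneg y$ and $\nneg(x\meet y)=\nneg x\join\nneg y$; one also has $\nneg(xy)=x\imp\nneg y$ by currying, and $\nneg(x\imp y)=x\nneg y$ (the inequality $x\nneg y\le\nneg(x\imp y)$ is immediate, and applying $\nneg$ to the identity $\nneg(x\nneg y)=x\imp\nneg\nneg y=x\imp y$ gives the reverse). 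I would also record the bookkeeping fact that for $z\le 1$ one has $z\imp 1\ge 1$, so that $(z\imp 1)\meet 1=1$ and hence $(z\imp(u\meet 1))\meet 1=(z\imp u)\meet 1$, and that multiplication (resp. implication $\imp^{-}$) of $\alg A^-$ is the restriction of that of $\alg A$ (resp. is $(x\imp y)\meet 1$). With these in hand I would check that $f(a)=\la a\meet 1,\nneg a\meet 1\ra$ preserves each operation of $\mathsf{BKL}$: the $\join$ and $\meet$ clauses of the K-expansion reduce to $1$-distributivity together with the anti-automorphism identities (and the universal law $(x\join y)\imp z=(x\imp z)\meet(y\imp z)$); the first coordinates of $f(ab)$ and of $f(a\imp b)$ are handled by (K1), and the second coordinate of $f(a\imp b)$ is $(a\nneg b)\meet 1=\nneg(a\imp b)\meet 1$ again by (K1); the second coordinate of $f(ab)$ and the first coordinate of $f(a\imp b)$ follow from (K2) applied with suitable substitutions (e.g.\ $x\mapsto a$, $y\mapsto\nneg b$, using $\nneg\nneg b=b$) after the simplification above. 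Finally $f(1)=\la 1,1\ra$ and $f(0)=\la 0,\nneg 0\meet 1\ra=\la 0,1\ra$, the identity and the bottom of $K(\alg A^-)$.

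\emph{Injectivity and main obstacle.} For injectivity I would establish the order characterization: for $a,b\in\alg A$, $a\le b$ iff $a\meet 1\le b\meet 1$ and $\nneg b\meet 1\le\nneg a\meet 1$. The forward direction is monotonicity of $-\meet 1$ and antitonicity of $\nneg$. For the converse, (K2) gives $a\imp b=((a\meet 1)\imp b)\meet((\nneg b\meet 1)\imp\nneg a)$; the hypotheses yield $a\meet 1\le b\meet 1\le b$ and $\nneg b\meet 1\le\nneg a\meet 1\le\nneg a$, i.e.\ both meetands are $\ge 1$, whence $a\imp b\ge 1$ and $a=a\cdot 1\le b$. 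In particular $f(a)=f(b)$ forces $a\le b$ and $b\le a$, so $f$ is injective, and being an operation-preserving map it is an embedding. The operation-by-operation verifications are long but mechanical; the only step needing a real (if short) argument is the identity $\nneg(x\imp y)=x\nneg y$ in $1$-involutive residuated lattices, and the care required is in tracking which meets with $1$ may be discarded, since — unlike in $\mathsf{BCRL}$ — a K-lattice is not integral and $a\imp b$ need not lie below $1$.
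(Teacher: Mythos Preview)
The paper gives no proof of this lemma: it simply attributes the result to \cite{BusanicheCignoli2014} and remarks that the argument is identical to the unbounded case. Your proposal reconstructs that standard proof correctly---the identification $K(\alg L)^-=\{\la a,1\ra:a\in L\}$, the operation-by-operation verification for $f$ using $1$-distributivity, (K1), (K2) and the involution identities, and the injectivity argument via (K2)---so there is nothing to compare; your write-up is precisely the omitted verification, with the extra observation that the bottom constant behaves as expected.
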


By the same token, all the properties of the operator $K$ transfer to the bounded case without any change. As in \cite{AglianoMarcos2020a}, a subvariety $\vv W$ of $\vv BKL$ is a \textbf{Kalman variety} if $\vv W = K(\vv W^-)$.

\begin{lemma}\label{techlemma}Let $\vv V$ be any subvariety of $\mathsf{BCRL}$,  $\vv W$ any subvariety of $\mathsf{BKL}$ and $\vv K$ any subclass of $\mathsf{BCRL}$:
\begin{enumerate}
 \item $K(\vv V)$ is a subvariety of $\mathsf{BKL}$ and $K(\vv V) = \{\alg A\in \mathsf{KL}: \alg A^- \in \vv V\}$;
 \item  $K(\VV(\vv K)) = \VV(K(\vv K))$;
\item $\HH\SU\PP_u(K(\vv K)) \sse \SU K(\HH\SU\PP_u(\vv K))$;
 \item  $K(\vv V)^-= \vv V$ and $\vv W \sse K(\vv W^-)$;
\item  $K(\vv W^-)$ is the smallest Kalman variety containing $\vv W$;
\item $K:\Lambda(\mathsf{BCRL})  \longmapsto\Lambda(\mathsf{BKL})$ is a lattice homomorphism;
\item  $\vv W^- \sse \vv V$ if and only if $\vv W \sse K(\vv V)$,
 hence\footnote{i.e. the operators $K,^-$ form a {\em left adjoint pair} between $\Lambda(\mathsf{BCRL})$ and $\Lambda(\mathsf{BKL})$.}
$$
\vv W^- = \bigwedge\{\vv U: \vv W \sse K(\vv U)\}.
$$
\item $K$ is also injective, i.e. it is an embedding.
\end{enumerate}
\end{lemma}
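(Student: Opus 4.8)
The plan is to transcribe the proof of the corresponding statement for (unbounded) K-lattices in \cite{AglianoMarcos2020a}, replacing throughout the unbounded versions of Lemmas~\ref{subdir} and~\ref{inclusions} by the bounded ones above and using the remark made just before the statement that every property of the operator $K$ --- in particular that $K$ commutes with $\PP$ and $\PP_u$, preserves embeddings, and preserves surjective homomorphisms --- carries over verbatim. I would establish (1) first, since it underlies everything else. Put $\vv V'=\{\alg A\in\mathsf{BKL}:\alg A^-\in\vv V\}$. By Lemma~\ref{subdir}(1) the class $\vv V'$ is closed under $\SU$, by Lemma~\ref{subdir}(2) it is closed under $\PP$, and if $\alg A\in\vv V'$ and $\alpha\in\Con A$ then $(\alg A/\alpha)^-\cong\alg A^-/\alpha^-\in\vv V$ (the congruence correspondence recalled before the lemma), so $\vv V'$ is closed under $\HH$; hence $\vv V'$ is a subvariety of $\mathsf{BKL}$. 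Since $K(\alg L)^-\cong\alg L$ we get $K(\vv V)\sse\vv V'$, and since every $\alg A\in\vv V'$ embeds into $K(\alg A^-)$ with $\alg A^-\in\vv V$ (Lemma~\ref{inclusions}) and $\vv V'$ is $\SU$-closed, also $\vv V'\sse K(\vv V)$; thus $K(\vv V)=\vv V'$ has the stated description.

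Items (2) and (3) would then go exactly as in the unbounded case. For (2): $\VV(K(\vv K))\sse K(\VV(\vv K))$ is immediate from (1), the right-hand side being a variety containing $K(\vv K)$; for the reverse inclusion let $\alg A\in K(\VV(\vv K))$, so $\alg A^-\in\VV(\vv K)=\HH\SU\PP(\vv K)$, say $\alg A^-=\alg B/\theta$ with $\alg B\le\prod_i\alg C_i$, $\alg C_i\in\vv K$; then $K(\alg B)\le K(\prod_i\alg C_i)\cong\prod_i K(\alg C_i)$ and $K(\alg B/\theta)$ is a surjective image of $K(\alg B)$, so $K(\alg A^-)\in\HH\SU\PP(K(\vv K))=\VV(K(\vv K))$, and since $\alg A\hookrightarrow K(\alg A^-)$ we conclude $\alg A\in\VV(K(\vv K))$. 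For (3): given $\alg A\in\HH\SU\PP_u(K(\vv K))$, write $\alg A=\alg E/\theta$ with $\alg E\le\prod_U K(\alg C_i)\cong K(\prod_U\alg C_i)$, $\alg C_i\in\vv K$; by Lemma~\ref{subdir}(1) $\alg E^-\le\prod_U\alg C_i$, so $\alg E^-\in\SU\PP_u(\vv K)$, whence $(\alg E/\theta)^-\cong\alg E^-/\theta^-\in\HH\SU\PP_u(\vv K)$, and $\alg A$ embeds into $K((\alg E/\theta)^-)$ by Lemma~\ref{inclusions}; thus $\alg A\in\SU K(\HH\SU\PP_u(\vv K))$.

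The remaining items are formal consequences of (1)--(3) and Lemma~\ref{inclusions}. For (4): $K(\vv V)^-\sse\vv V$ is clear from the description of $K(\vv V)$ in (1), $\vv V\sse K(\vv V)^-$ follows from $\alg L\cong K(\alg L)^-$, and $\vv W\sse K(\vv W^-)$ follows from $\alg A\hookrightarrow K(\alg A^-)$ together with $\SU$-closure of the variety $K(\vv W^-)$. For (5): $K(\vv W^-)$ contains $\vv W$ by (4) and is Kalman because $K(\vv W^-)^-=\vv W^-$ by (4); and if $\vv U$ is Kalman with $\vv W\sse\vv U$ then $\vv W^-\sse\vv U^-$, hence $K(\vv W^-)\sse K(\vv U^-)=\vv U$. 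For (6): $K$ preserves meets because $\alg A\in K(\vv V_1\meet\vv V_2)$ iff $\alg A^-\in\vv V_1$ and $\alg A^-\in\vv V_2$ iff $\alg A\in K(\vv V_1)\meet K(\vv V_2)$ by (1); and it preserves joins because $K(\vv V_1\join\vv V_2)=K(\VV(\vv V_1\cup\vv V_2))=\VV(K(\vv V_1\cup\vv V_2))=K(\vv V_1)\join K(\vv V_2)$, using (2) and the fact that $K(\vv V_1\cup\vv V_2)$ generates the same subvariety as $K(\vv V_1)\cup K(\vv V_2)$. For (7): $\vv W^-\sse\vv V$ implies $K(\vv W^-)\sse K(\vv V)$, hence $\vv W\sse K(\vv V)$ by (4); conversely $\vv W\sse K(\vv V)$ implies $\vv W^-\sse K(\vv V)^-=\vv V$ by (4); the displayed identity then holds because $\vv W^-$ belongs to $\{\vv U:\vv W\sse K(\vv U)\}$ and lies below every member of it. Finally (8): if $K(\vv V_1)=K(\vv V_2)$ then $\vv V_1=K(\vv V_1)^-=K(\vv V_2)^-=\vv V_2$ by (4), so $K$ is injective; together with (6) this makes $K$ a lattice embedding.

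I expect the only place requiring more than bookkeeping to lie in (1)--(3), and there only in checking that the two imported ingredients --- that the congruence correspondence $\Con B\cong\op{Con}(\alg B^-)$ of \cite{HartRafterTsinakis2002} is compatible with quotients in the sense $(\alg A/\alpha)^-\cong\alg A^-/\alpha^-$, and that $K$ commutes with $\PP$ and $\PP_u$ and preserves embeddings and surjections --- do not in fact depend on anything lost when one passes from $\mathsf{CIRL}$ to $\mathsf{BCRL}$ by adjoining the constant $0$. Since Lemmas~\ref{subdir} and~\ref{inclusions} are precisely the bounded packaging of these facts, I anticipate the proof will read as a routine transcription of its unbounded predecessor.
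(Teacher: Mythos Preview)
Your proposal is correct and matches the paper's approach exactly: the paper gives no proof at all, stating just before the lemma that ``all the properties of the operator $K$ transfer to the bounded case without any change'' and deferring to \cite{AglianoMarcos2020a}. Your transcription of that unbounded argument, with Lemmas~\ref{subdir} and~\ref{inclusions} substituted for their unbounded analogues and the congruence correspondence $(\alg A/\alpha)^-\cong\alg A^-/\alpha^-$ supplying closure under $\HH$, is precisely what the paper has in mind.
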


\medskip

If ${\alg A}\in\mathsf{BKL}$ and $\alg B$ is a subalgebra of $\alg A$, we say that $\alg B$ is an \textbf{admissible subalgebra} if  $B^-=A^-$. Since it is evident that the intersection of any family of admissible subalgebras is still admissible, admissible subalgebras form a complete meet-semilattice (indeed algebraic) under inclusion.

If $\alg A\in \mathsf{BCRL}$, we define $K_0(\alg A)$ as the minimal admissible subalgebra of $K(\alg A)$. This notation will come in handy when we look at some of the examples.

In some special cases admissible subalgebras can be totally characterized.

\medskip

An algebra $\alg A \in \mathsf{BCRL}$ is {\bf involutive} if for all $a \in A$, $\neg\neg a =a$; in this case we may define $a \oplus b = (a \imp 0)\imp b$.

\begin{theorem}\label{kalmanwajsberg} Let $\alg A \in \mathsf{BCRL}$ be  involutive; then there is a one to one and onto correspondence between the lattice filters of $\alg A$ and the admissible subalgebras of $K(\alg A)$. More precisely, if $F$ is a lattice filter of $\alg A$, then
$$
K(\alg B,F) =\{(a,b) \in K(\alg B): a \oplus b \in F\} \le K(\alg B)
$$
is admissible. Conversely if $\alg A\in \mathsf{BKL}$ is such that $\alg A^-$ is involutive, then
$$
F= \{((a \meet 1) \imp_1 0) \imp_1 \nneg a: a \in A\}
$$
is a lattice filter of $\alg A^-$ and $K(\alg A^-,F)$ is an admissible subalgebra of $K(\alg A^-)$.
\end{theorem}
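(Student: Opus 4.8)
The plan is to establish the correspondence in two directions, exploiting the structure of $K(\alg A)$ for involutive $\alg A \in \mathsf{BCRL}$. First I would analyze the map $F \longmapsto K(\alg A, F)$. Given a lattice filter $F$ of $\alg A$, I must check that $K(\alg A,F)$ is a subalgebra and that it is admissible. For the subalgebra property, I would verify closure under each of the four operations: given $(a,b),(c,d)$ with $a\oplus b, c\oplus d \in F$, one computes the $\oplus$-value of the result of join, meet, product and implication, and shows it lies in $F$ using that $F$ is an up-set closed under finite meets and $1 \in F$. The key computational facts here are the identities governing $a\oplus b = (a \imp 0)\imp b = \neg a \imp b$ in an involutive $\mathsf{BCRL}$; for instance, for the join $(a\join c, b \meet d)$ one needs $\neg(a\join c) \imp (b\meet d) \in F$, which should follow since this element is above both $\neg a \imp b$ and $\neg c \imp d$ (monotonicity), and $F$ is an up-set. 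For admissibility I would check $K(\alg A,F)^- = \alg A^-$: an element $(a,b)$ with $(a,b) \le 1 = (1,1)$ forces $b = 1$ (by the definition of the K-order), and then $a \oplus 1 = \neg a \imp 1 = 1 \in F$ automatically, so every element of the negative cone survives; conversely $K(\alg A,F)^- \le K(\alg A)^- = \alg A^-$ by Lemma~\ref{subdir}.

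Next I would treat the converse. Starting from $\alg A \in \mathsf{BKL}$ with $\alg A^-$ involutive, I define $F$ as in the statement and must show (i) $F$ is a lattice filter of $\alg A^-$, and (ii) the embedding $f$ of Lemma~\ref{inclusions} identifies $\alg A$ with $K(\alg A^-, F)$ inside $K(\alg A^-)$. For (i), note $F$ consists of elements of the form $((a\meet 1)\imp_1 0)\imp_1 \nneg a$ where $\imp_1$ is the implication in $\alg A^-$; these lie in the negative cone since $\imp_1$ lands below $1$. Taking $a = 1$ gives $\nneg 1 = 1$ wait — more carefully, choosing $a$ with $a \meet 1 = 1$ and $\nneg a = 1$ (e.g.\ $a = 1$) yields the element $(1 \imp_1 0)\imp_1 1 = 0 \imp_1 1 = 1 \in F$. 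Upward closure and closure under meet should follow by manipulating the involutive negation and the residuation identities, possibly reindexing: the elements of $F$ can be rewritten as $(\neg^-(a\meet 1)) \imp_1 (\nneg a \meet 1)$ where $\neg^- x = x \imp_1 0$, which is exactly $(a\meet 1)\oplus(\nneg a \meet 1)$ in the involutive algebra $\alg A^-$.

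For (ii), I would show that $f(a) = (a\meet 1, \nneg a \meet 1)$ satisfies $(a\meet 1)\oplus(\nneg a \meet 1) \in F$ by the very definition of $F$, so $f(\alg A) \sse K(\alg A^-,F)$; the reverse inclusion is the real point, and it should follow from a dimension/counting argument or, better, from the observation that $f(\alg A)$ is itself admissible (since $f(\alg A)^- \cong \alg A^-$ via $f$ restricted to the negative cone) together with the fact that $K(\alg A^-,F)$ is the \emph{smallest} admissible subalgebra containing the image of the negative cone — or one shows directly that any $(a,b) \in K(\alg A^-,F)$ equals $f$ of some element, using $a \oplus b \in F$ to recover a preimage. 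Finally I would verify the two assignments are mutually inverse on the relevant lattices.

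The main obstacle I expect is the bookkeeping in the converse direction, specifically proving that $F$ as defined is genuinely closed under binary meets and that $K(\alg A^-,F)$ is exactly $f(\alg A)$ rather than merely containing it. Both require pushing the involutive identities together with (K2) — which relates $\imp$ to the negative-cone data $(a\meet 1)\imp y$ and $(\nneg y \meet 1)\imp \nneg x$ — through several nested residuation steps; getting the element $\oplus$-values to match up on the nose, rather than just up to the filter, is where the argument is most delicate. The forward direction (that $K(\alg A,F)$ is an admissible subalgebra) is more routine, being essentially four monotonicity checks plus the trivial observation about the negative cone.
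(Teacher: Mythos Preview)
The paper does not actually prove this theorem: immediately after stating it (together with Theorem~\ref{kalmanheyting}) the authors write ``For a proof of the two previous results we quote \cite{BusanicheCignoli2014}.'' So there is no in-paper argument to compare against; your plan is a reconstruction of what the cited proof presumably does.

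That said, your outline is essentially the natural one and matches the shape of the Busaniche--Cignoli argument. A couple of small points: for the join case your monotonicity claim is slightly misstated --- $(a\join c)\oplus(b\meet d)$ is not above \emph{both} $a\oplus b$ and $c\oplus d$, only above their meet, but since $F$ is a lattice filter this suffices. In the converse direction you correctly identify the delicate step as the equality $f(\alg A)=K(\alg A^-,F)$ rather than mere inclusion; this is indeed where the involutive hypothesis on $\alg A^-$ does real work (via the identity $x=\neg^-\neg^- x$ allowing one to reconstruct an element of $\alg A$ from any pair $(a,b)$ with $a\oplus b\in F$), and your diagnosis that (K2) together with the involutive identities is what makes the bookkeeping go through is accurate.
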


Wajsberg algebras and rotations (both connected and disconnected) are examples of involutive $\mathsf{BCRL}$.

\medskip

 Let now $\alg H$ be a  Heyting algebra; an element $a\in \alg H$ is {\bf dense} if $\neg a=0$. A filter $F$ of $\alg H$ is {\bf regular} of it contains all the dense elements; it is easy to see that $F$ is regular if and only if $\alg H/F$ is a Boolean algebra.

\begin{theorem}\label{kalmanheyting} Let $\alg H$ be a Heyting algebra; then there is a one to one and  correspondence between the regular filters of $\alg H$ and the algebras in $\mathsf{KL}$ whose negative cone is isomorphic with $\alg H$. More precisely, if $F$ is a regular filter of $\alg H$, then
$$
K(\alg H,F)= \{(a,b) \in K(\alg H):  a \join b \in F\} \le K(\alg H)
$$
and $K(\alg H,F)^- \cong \alg H$. Conversely if $\alg A\in \mathsf{BKL}$ is such that  $\alg A^-$ is a Heyting algebra, then
$F=\{(a \join \nneg a) \meet 1: a \in A\}$ is a regular filter of $\alg A^-$ such that $K(\alg A^-,F) \cong \alg A$.
\end{theorem}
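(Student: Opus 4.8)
The plan is to establish the two directions of the correspondence separately and then check they are mutually inverse. For the first direction, start with a regular filter $F$ of the Heyting algebra $\alg H$. First I would verify that $K(\alg H, F) = \{(a,b) \in K(\alg H) : a \join b \in F\}$ is closed under the four K-expansion operations: for $\join$ and $\meet$ this uses that $F$ is a lattice filter together with $1$-distributivity of $K(\alg H)$; for multiplication $\la a,b\ra\la c,d\ra = \la ac, (a\imp d)\meet(c\imp b)\ra$ one must show $ac \join ((a\imp d)\meet(c\imp b)) \in F$ assuming $a\join b, c\join d \in F$, which in a Heyting algebra reduces to a computation with $\imp$ and $\join$ inside $F$ (here regularity will matter, since dense elements can appear); similarly for $\imp$. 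So $K(\alg H,F)$ is a subalgebra. Next, observe $(a,b)\le 1 = (1,1)$ in $K(\alg H)$ forces $b = 1$ by the ordering on $K$, hence $(a,1)\in K(\alg H,F)$ automatically (as $a\join 1 = 1 \in F$), giving $K(\alg H,F)^- = \{(a,1): a\in H\} \cong \alg H$; note the negative cone of $K(\alg H,F)$ does not depend on $F$ at all, and that it is a Heyting algebra because idempotency of multiplication on the negative cone transfers. It remains to check $K(\alg H,F)\in\mathsf{KL}$, i.e. that it is $1$-involutive, $1$-distributive, and satisfies (K1) and (K2); these are inherited from $K(\alg H)$ since $K(\alg H,F)$ is a subalgebra and all of these are equational (or at least subalgebra-stable) properties, using the theorem of \cite{BusanicheCignoli2014} quoted above.

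For the converse direction, take $\alg A\in\mathsf{BKL}$ with $\alg A^-$ a Heyting algebra, and set $F = \{(a\join\nneg a)\meet 1 : a\in A\}$. First I would show $F$ is an up-set and closed under $\meet$ in $\alg A^-$, hence a lattice filter: for an up-set, if $x\ge (a\join\nneg a)\meet 1$ with $x\le 1$, write $x$ itself in the required form using that in a Heyting algebra every element $x\le 1$ satisfies $x = (x \join \nneg x)\meet 1$ when... — more carefully, one should show directly that $F$ equals the set of dense-or-above elements, i.e. $F = \{x\in A^- : \nneg x = 0 \text{ in } \alg A^-\}\upa$ is not quite it either; the cleanest route is to show $F$ contains all dense elements of $\alg A^-$ and is a filter, hence regular. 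Density: if $d\in A^-$ with $\neg d = 0$, exhibit $a\in A$ with $(a\join\nneg a)\meet 1 \le d$ or $= d$; the natural candidate uses the embedding $f$ of Lemma \ref{inclusions}. Then I would invoke Lemma \ref{inclusions}: $\alg A$ embeds in $K(\alg A^-)$ via $a\mapsto(a\meet 1,\nneg a\meet 1)$, and the image is precisely $K(\alg A^-, F)$ — this is the crux. One inclusion: for $a\in A$, $(a\meet 1)\join(\nneg a\meet 1) \ge (a\join\nneg a)\meet 1 \in F$ (and in fact equals it by $1$-distributivity), so $f(a)\in K(\alg A^-,F)$. The reverse inclusion, that every $(x,y)\in K(\alg A^-)$ with $x\join y\in F$ is $f(a)$ for some $a\in A$, is where $1$-involutivity and (K2) of $\alg A$ get used: one recovers the candidate $a$ and checks $a\meet 1 = x$, $\nneg a\meet 1 = y$ using the defining equations of $\mathsf{KL}$.

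Finally I would check the two constructions are mutually inverse: starting from a regular filter $F$ and forming $K(\alg H,F)$, the filter recovered as $\{(c\join\nneg c)\meet 1 : c\in K(\alg H,F)\}$ should be (isomorphic to, under $\alg H\cong K(\alg H,F)^-$) the original $F$ — one containment is the density argument again, the other is direct since for $(a,b)\in K(\alg H,F)$ we get $(a\join b)\meet 1$, identified with $a\join b\in F$. And starting from $\alg A$ with $\alg A^-$ Heyting, $K(\alg A^-, F)\cong\alg A$ is exactly the content of the embedding being onto its image, already done. The main obstacle I expect is the surjectivity half of $f[A] = K(\alg A^-, F)$: it requires genuinely using that $\alg A$ is a K-lattice (not merely a BCRL) — specifically that (K2) lets one reconstruct an element of $A$ from its pair of "coordinates" in $A^-$ — and checking that the side condition $x\join y\in F$ is exactly what guarantees the reconstructed element lands back in $A$ rather than in the larger $K(\alg A^-)$. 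The role of regularity of $F$ (equivalently, that $\alg A^-$ being Heyting means multiplication is idempotent, forcing the relevant quotient to be Boolean) should be isolated early and cited wherever the density of $F$ is invoked.
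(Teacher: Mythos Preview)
The paper does not give its own proof of this theorem: immediately after stating it (together with the analogous Theorem~\ref{kalmanwajsberg}) the authors simply write ``For a proof of the two previous results we quote \cite{BusanicheCignoli2014}.'' So there is no in-paper argument to compare your plan against.

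That said, your outline follows the natural route (and, as far as one can tell, the one in the cited source): verify that $K(\alg H,F)$ is a subalgebra with full negative cone, use the embedding of Lemma~\ref{inclusions} for the converse direction, and check that the two assignments are mutually inverse. The places you flag as delicate --- closure of $K(\alg H,F)$ under product and implication (where regularity is genuinely needed, since elements like $a\join(a\imp d)\ge a\join\neg a$ are dense and must lie in $F$) and surjectivity of the embedding $f$ onto $K(\alg A^-,F)$ --- are exactly the substantive steps, so your identification of the obstacles is accurate even if the details remain to be filled in.
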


For a proof of the two previous results we quote \cite{BusanicheCignoli2014}.

\medskip

We also have the following result, concerning ordinal sums.

\begin{theorem}\label{admissibleordinalsum}(\cite{AglianoMarcos2020a}, Theorem 3.4) The admissible subalgebras of $K({\alg A}\oplus {\alg B})$ are in one to one correspondence with the admissible subalgebras of $K({\alg A})$. Moreover, if $\alg S$ is an admissible subalgebra of $K({\alg A})$, then $T_S^\alg B=S\cup (A\times B)\cup (B\times A)\cup (B\times B)$ is the universe of an admissible subalgebra of $K({\alg A}\oplus {\alg B})$.
And if $\alg T$ is an admissible subalgebra of $K({\alg A}\oplus {\alg B})$, then $S_T=T\cap A\times A$ is the universe of an admissible subalgebra of $K({\alg A})$ that satisfies $T_{S_T}^\alg B=T$.
\end{theorem}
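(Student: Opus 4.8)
The plan is to realise the correspondence via the canonical projection of the ordinal sum onto its first factor, which isolates the only genuinely computational step and turns the rest into formal manipulation. Write $\alg C=\alg A\oplus\alg B$ and $U=(C\times C)\setminus(A\times A)=\{\la c,d\ra: c\in B\setminus\{1\}\text{ or }d\in B\setminus\{1\}\}$. I would begin with three elementary remarks: (a) by Lemma~\ref{inclusions} the negative cone of $K(\alg D)$ (for $\alg D\in\mathsf{BCRL}$) is $D\times\{1\}$, so an admissible subalgebra of $K(\alg D)$ is exactly a subalgebra containing $D\times\{1\}$, and since $\la 1,d\ra=\nneg\la d,1\ra$ such a subalgebra also contains $\{1\}\times D$; (b) hence for admissible $\alg S\le K(\alg A)$ one has $T_S^\alg B=S\cup U$, because the three blocks $A\times B$, $B\times A$, $B\times B$ together cover $U$ plus $A\times\{1\}$ and $\{1\}\times A$, and the last two already sit inside $S$; (c) $U\cap(A\times A)=\emptyset$. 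Next I would check that $f\colon\alg C\to\alg A$, acting as the identity on $A$ and as the constant $1$ on $B\setminus\{1\}$, is a surjective $\mathsf{BCRL}$-homomorphism (equivalently, $B$ is a filter of $\alg C$ with $\alg C/B\cong\alg A$); this is routine from the definition of $\oplus$, the only slightly delicate clause being the join when $1_A$ is not join irreducible, which is precisely the case requiring $\alg B$ bounded. Since $K$ acts coordinatewise it carries $f$ to a surjective $\mathsf{BKL}$-homomorphism $K(f)\colon K(\alg C)\to K(\alg A)$, $\la c,d\ra\mapsto\la f(c),f(d)\ra$.

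For one direction of the correspondence I would show $T_S^\alg B=K(f)^{-1}(S)$ for every admissible $\alg S\le K(\alg A)$. On $A\times A$ the map $K(f)$ is the identity, so a pair there lies in $K(f)^{-1}(S)$ iff it lies in $S$; and if $\la c,d\ra\in U$ then $K(f)\la c,d\ra$ has a coordinate equal to $1$, hence belongs to $A\times\{1\}\cup\{1\}\times A\subseteq S$, so $U\subseteq K(f)^{-1}(S)$. Thus $K(f)^{-1}(S)=S\cup U=T_S^\alg B$; being the preimage of a subalgebra under a homomorphism, it is a subalgebra of $K(\alg C)$, and it contains $C\times\{1\}$ (split as $A\times\{1\}\subseteq S$ and $(B\setminus\{1\})\times\{1\}\subseteq U$), so it is admissible. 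By (c), $S_{T_S^\alg B}=T_S^\alg B\cap(A\times A)=S$, so $S\mapsto T_S^\alg B$ is injective.

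The other direction carries the real work: for admissible $\alg T\le K(\alg C)$ one must prove $U\subseteq T$, and for this I would use only $C\times\{1\}\subseteq T$, closure under $\nneg$, and the ordinal-sum identities $u\imp_C a=a$ (for $u\in B\setminus\{1\}$, $a\in A\setminus\{1\}$) and $0_A\join_C v=v$ (for $v\in B\setminus\{1\}$). Then $\la u,1\ra\cdot\la 1,a\ra=\la u,a\ra$ gives $(B\setminus\{1\})\times A\subseteq T$; applying $\nneg$, which sends $\la u,a\ra$ to $\la a,u\ra$, gives $A\times(B\setminus\{1\})\subseteq T$; and $\la u,0_A\ra\meet\la 1,v\ra=\la u,v\ra$ gives $(B\setminus\{1\})\times(B\setminus\{1\})\subseteq T$; together these yield $U\subseteq T$. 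Hence $T=(T\cap A\times A)\cup U=S_T\cup U$, and $S_T=K(f)(T)$ (since $K(f)$ fixes $A\times A$ pointwise and maps $U$ into $A\times\{1\}\cup\{1\}\times A\subseteq S_T$), so $S_T$ is a homomorphic image of a subalgebra and contains $A\times\{1\}$, hence is an admissible subalgebra of $K(\alg A)$, with $T_{S_T}^\alg B=S_T\cup U=T$. Thus $T\mapsto S_T$ is a two-sided inverse of $S\mapsto T_S^\alg B$, both maps preserve inclusion, and the degenerate cases ($\alg A$ or $\alg B$ trivial) are immediate.

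The step I expect to be the main obstacle is precisely $U\subseteq T$ for an arbitrary admissible $\alg T$: it is the only one that cannot be reduced to formalities about the projection $f$ and the functoriality of $K$, and it forces one to pin down the explicit behaviour of $\imp$, $\meet$ and $\join$ across the two components of $\oplus$ (in particular the three identities quoted above); once it is in hand, the description $T_S^\alg B=K(f)^{-1}(S)$ delivers the rest essentially by inspection.
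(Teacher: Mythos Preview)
The paper does not prove this theorem; it merely quotes it from \cite{AglianoMarcos2020a} (Theorem~3.4) without reproducing the argument, so there is no in-paper proof to compare against.

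Your proof is correct and well organised. The device of realising the correspondence through the quotient homomorphism $f\colon\alg A\oplus\alg B\to\alg A$ (collapsing $B$ to $1$) and its Kalman lift $K(f)$ is efficient: it turns the verification that $T_S^{\alg B}$ is a subalgebra into the triviality that $K(f)^{-1}(S)$ is one, and dually exhibits $S_T$ as the homomorphic image $K(f)(T)$. You rightly isolate $U\subseteq T$ as the only step requiring actual computation in the ordinal sum, and your three identities $\la u,1\ra\cdot\la 1,a\ra=\la u,a\ra$, $\nneg\la u,a\ra=\la a,u\ra$, and $\la u,0_A\ra\meet\la 1,v\ra=\la u,v\ra$ (for $u,v\in B\setminus\{1\}$, $a\in A$) do cover it cleanly. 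One minor remark: the functoriality of $K$ on homomorphisms is not stated explicitly in this paper, but it is immediate from the coordinatewise definitions, so your appeal to it is unproblematic.
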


As a particular case, we can completely describe the admissible subalgebras of $K({\alg A}\oplus {\alg B})$ if $\alg A\in\mathsf{BCRL}$ is either involutive or Heyting, using Theorems \ref{kalmanwajsberg} and \ref{kalmanheyting}.

\section{Atoms and covers}\label{S2.atomsandcovers}

If $\vv V$ is any variety we denote by $\Lambda(\vv V)$ its lattice of subvarieties; a variety $\vv W \in \Lambda(\vv V)$ that is a cover of an atom is called {\bf almost minimal}. The lattice of subvarieties of (non necessarily bounded) K-lattices  has been investigated at length in \cite{AglianoMarcos2020a}.  If $\mathbf 2$ is the two element Boolean algebra and  then $K(\mathbf 2)$ has four elements  and it is the only four element bounded K-lattice, so it makes sense to denote it by $\alg K_4$.  Moreover  if $2=\{0,1\}$, then  $\{(0,1),(1,1),(1,0)\}$ is the universe of admissible subalgebra $\alg K_3$ of $K(\mathbf 2)$. As in the unbounded case \cite{BusanicheCignoli2014}  $\alg K_3$ is the only totally ordered algebra in $\mathsf{BKL}$ and therefore
$\VV(\alg K_3)$ is the only representable subvariety of $\mathsf{BKL}$.  $K(\VV(\mathbf 2))$ is of course the variety generated by the {\em kalmanization} of the variety $\mathsf{BA}$ of Boolean algebras; and in the same way that \cite{AglianoMarcos2020a} we can check that $\VV(\alg K_3)$ is not a Kalman variety.
Here is the first result that is different for bounded K-lattices.

\begin{theorem}\label{onlyatom} $\VV(\alg K_3)$ is the only  atom in $\Lambda(\mathsf{BKL})$.
\end{theorem}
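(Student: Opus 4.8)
The plan is to show that any nontrivial subvariety $\vv W$ of $\mathsf{BKL}$ contains $\VV(\alg K_3)$, and that $\VV(\alg K_3)$ has no proper nontrivial subvariety; the latter is immediate since $\alg K_3$ is a $3$-element algebra with no proper nontrivial subalgebra (it is generated by $0$) and is simple, so I will concentrate on the former. First I would take a nontrivial $\vv W \in \Lambda(\mathsf{BKL})$ and pick a nontrivial subdirectly irreducible $\alg A \in \vv W$. Since algebras in $\mathsf{CRL}$ are congruence distributive, J\'onsson's Lemma (Lemma \ref{jonsson}) is available; but for producing an atom below $\vv W$ the key structural move is to pass to $\alg A^-$, which is a nontrivial algebra in $\mathsf{BCRL}$, hence has $0 < 1$, and therefore has $\alg 2$ (the two-element Boolean algebra) as a subalgebra via $\{0,1\}$. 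Then by Lemma \ref{subdir}(1), or rather by the embedding $\alg A \hookrightarrow K(\alg A^-)$ of Lemma \ref{inclusions} together with the monotonicity of $K$ and the fact that $K(\alg 2) = \alg K_4 \le K(\alg A^-)$, one gets $\alg K_4 \in \SU(K(\alg A^-))$.

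The subtlety is that $\alg K_4$ need not itself lie in $\vv W$; only $\alg A$ does, and $\alg A$ embeds into $K(\alg A^-)$ but is not claimed to equal it. So the actual target must be $\alg K_3$, the minimal admissible subalgebra, and I would argue as follows. Consider the subalgebra of $\alg A$ generated by $\{0,1\}$ (equivalently by $0$, since $1$ is a constant and $\top = {\sim}0$, $1 = 0 \imp 0$ up to the relevant operations). I claim this subalgebra is isomorphic to $\alg K_3$: its negative cone is the subalgebra of $\alg A^-$ generated by $0$, which is $\{0,1\} \cong \alg 2$ since $\alg 2$ is the prime field of $\mathsf{BCRL}$; and then, using the description of admissible subalgebras and the embedding $\alg A \hookrightarrow K(\alg A^-)$ from Lemma \ref{inclusions}, the subalgebra of $K(\alg A^-)$ generated by $(0,1)$ has universe contained in $\{(0,1),(1,1),(1,0)\}$ — one checks $(0,1) \imp (0,1) = (1,1)$, ${\sim}(0,1) = (1,0)$, and that this three-element set is closed under all operations — so it is exactly $\alg K_3$ (it cannot collapse further because $\alg K_3$ has three distinct elements and the generated subalgebra contains $0 = (0,1) \ne 1 = (1,1) \ne \top = (1,0)$). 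Hence $\alg K_3 \in \SU(\alg A) \sse \vv W$, giving $\VV(\alg K_3) \sse \vv W$.

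The remaining point, that $\VV(\alg K_3)$ is itself an atom and is the \emph{only} atom, follows because every nontrivial subvariety contains it (so no other variety can be minimal) and because $\alg K_3$, being a $3$-element simple algebra generated by its constant $0$, generates a variety with no proper nontrivial subvariety: any nontrivial $\vv U \sse \VV(\alg K_3)$ contains a nontrivial SI algebra which, by J\'onsson's Lemma applied to the finite algebra $\alg K_3$, lies in $\HH\SU(\alg K_3) = \{\alg K_3\}$ up to isomorphism and trivial algebras, so $\vv U = \VV(\alg K_3)$.

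I expect the main obstacle to be the verification, carried out concretely inside $K(\alg A^-)$, that the subalgebra generated by the bottom element $0$ is \emph{exactly} $\alg K_3$ and not something larger — in particular checking that $\{(0,1),(1,1),(1,0)\}$ is closed under the $\mathsf{BKL}$ operations (the products $(1,0)\cdot(1,0)$, $(0,1)\cdot(1,0)$, the residua among these three pairs, and the lattice operations) using the explicit formulas for $K$. This is a short finite computation but it is the crux; everything else is a routine application of the adjunction and congruence-distributivity machinery already set up in the preliminaries. One should also note explicitly that $\alg K_3 = \alg A$ is possible, and that the argument does not require $\alg A^-$ to be involutive or Heyting, so Theorems \ref{kalmanwajsberg} and \ref{kalmanheyting} are not needed here.
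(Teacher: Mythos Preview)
Your argument is correct. The paper actually states Theorem~\ref{onlyatom} without proof, presumably because it is regarded as immediate once one has the bounded signature: the smallest subalgebra of any nontrivial $\alg A \in \mathsf{BKL}$ (the one generated by the constants $0$ and $1$) is, via the embedding of Lemma~\ref{inclusions}, exactly $\{(0,1),(1,1),(1,0)\} \cong \alg K_3$, and $\alg K_3$ is simple with no proper nontrivial subalgebras. Your proposal spells out precisely this route, so it matches what the authors have in mind.

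One small correction: your parenthetical remark that ``$1 = 0 \imp 0$'' is wrong in $\mathsf{BKL}$. Computing in $K(\alg A^-)$ gives $(0,1) \imp (0,1) = \bigl((0\imp 0)\meet(1\imp 1),\,0\cdot 1\bigr) = (1,0) = \top$, not $1$. This does not damage the argument, since $1$ is already a constant of the signature and you correctly rely on that fact; but the aside should be deleted. You might also note explicitly that the three elements $0,1,\top$ are pairwise distinct in any nontrivial $\alg A$: if $\top = 1$ then $\alg A$ is integral and $1$-involutive, forcing $0 = {\sim}{\sim}0 = {\sim}1 = 1$; and $0 = 1$ makes $\alg A^-$ trivial, whence $\alg A$ is trivial by injectivity of the embedding in Lemma~\ref{inclusions}.
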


There is another subtler difference of behavior in bounded K-lattices. Here, the element $(0,0)$ always exists in $K(\alg A)$ for $\alg A\in \mathsf{BCRL}$, and can be described equationally.

\begin{lemma} If $\alg A\in \mathsf{BKL}$, then there exists at most one element ${\bf o}\in \alg A$ such that
\begin{itemize}
	\item $\sim {\bf o} = {\bf o}$,
	\item ${\bf o}\wedge 1=0$.
\end{itemize}
\end{lemma}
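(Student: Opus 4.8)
The plan is to argue by contradiction: suppose $\mathbf{o}_1$ and $\mathbf{o}_2$ are two elements of $\alg A$, each satisfying $\nneg \mathbf{o}_i = \mathbf{o}_i$ and $\mathbf{o}_i \meet 1 = 0$, and show $\mathbf{o}_1 = \mathbf{o}_2$. The natural tool is the embedding from Lemma \ref{inclusions}: $f\colon a \longmapsto (a \meet 1, \nneg a \meet 1)$ maps $\alg A$ into $K(\alg A^-)$, so it suffices to show $f(\mathbf{o}_1) = f(\mathbf{o}_2)$ in $K(\alg A^-)$. But $f(\mathbf{o}_i) = (\mathbf{o}_i \meet 1, \nneg \mathbf{o}_i \meet 1) = (0, \mathbf{o}_i \meet 1) = (0,0)$, using first $\mathbf{o}_i \meet 1 = 0$ and then $\nneg \mathbf{o}_i = \mathbf{o}_i$. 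Hence $f(\mathbf{o}_1) = (0,0) = f(\mathbf{o}_2)$, and injectivity of $f$ gives $\mathbf{o}_1 = \mathbf{o}_2$.

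First I would recall that $\alg A^- \in \mathsf{BCRL}$, so it has a least element $0$ (the same $0$ as in $\alg A$), and that the bottom element of $K(\alg A^-)$ is precisely $(0,1)$ while its distinguished constant $0$ is computed coordinatewise; one should check which pair the constant $0$ of $K(\alg A^-)$ actually is. In the K-expansion the element we want to land on is $(0_{A^-}, 0_{A^-})$, and the computation above shows $f(\mathbf{o}_i)$ equals exactly this pair, independently of $i$. The only mild subtlety is to confirm that $\nneg$ in $\alg A$ and the implication-to-$1$ used to define $\nneg$ on $K(\alg A^-)$ are compatible with $f$, i.e. that $f$ really is a homomorphism for $\nneg$; but this is part of the content of Lemma \ref{inclusions}, which we are entitled to assume.

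The main obstacle, such as it is, is bookkeeping rather than mathematics: one must be careful about the two roles of $1$ (the monoid identity of $\alg A$ versus that of $\alg A^-$, which coincide) and about the fact that $\mathbf{o}$ need not lie in the negative cone $A^-$ — indeed if $\mathbf{o} \neq 0$ then $\mathbf{o} \not\le 1$. This is exactly why one passes through $K(\alg A^-)$ via $f$ rather than arguing directly inside $\alg A$: the defining conditions on $\mathbf{o}$ pin down its image under $f$ completely, even though they do not obviously pin down $\mathbf{o}$ itself. Once the image is seen to be the constant pair $(0,0)$, uniqueness is immediate from injectivity of $f$.

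An alternative, more self-contained route avoids $K(\alg A^-)$ entirely: from $\mathbf{o} \meet 1 = 0$ and $\nneg\mathbf{o} = \mathbf{o}$ one can try to derive a closed term for $\mathbf{o}$ in the language of $\mathsf{BKL}$, for instance showing $\mathbf{o} = \nneg(0 \vee 1)$ or a similar expression built from $0$ and $1$, which would exhibit $\mathbf{o}$ as the value of a fixed term and hence force uniqueness. I would only fall back on this if the embedding argument hit an unexpected snag; the embedding argument is cleaner and is the one I would write up.
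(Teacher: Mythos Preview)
Your argument is correct and is essentially the same as the paper's: both rest on the fact (from \cite{BusanicheCignoli2014}, recorded in the paper as Lemma~\ref{inclusions}) that in a K-lattice the map $a \mapsto (a\meet 1,\nneg a\meet 1)$ is injective, so two elements agreeing on $a\meet 1$ and $\nneg a\meet 1$ must coincide. The paper phrases this as ``$z\meet 1=\mathbf{o}\meet 1$ and $\nneg z\meet 1=\nneg\mathbf{o}\meet 1$ imply $z=\mathbf{o}$'' and cites \cite{BusanicheCignoli2014} directly, while you invoke the embedding $f$ explicitly and compute $f(\mathbf{o}_i)=(0,0)$; these are the same idea.
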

\begin{proof}If $z\in A$ satisfies both $\sim z=z$ and $z\wedge 1=0$, then clearly $z\wedge 1={\bf 0}\wedge 1$ and $\sim z\wedge 1=\sim{\bf o}\wedge 1$, and in Kalman lattices these two equations imply that $z={\bf o}$ (see \cite{BusanicheCignoli2014}).
\end{proof}

\begin{lemma}\label{abovek3} If $\alg A\in \mathsf{BKL}$, the following are equivalent
\begin{enumerate}
	\item $\alg A \cong K(\alg A^-)$,
	\item $\alg K_4\leq \alg A$,
	\item there exists ${\bf o}\in A$ such that $\sim {\bf o} = {\bf o}$ and ${\bf o}\wedge 1=0$.
\end{enumerate}
\end{lemma}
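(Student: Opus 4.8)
The plan is to establish the cycle of implications $(1)\Rightarrow(3)\Rightarrow(2)\Rightarrow(1)$, which keeps each step short and uses the tools already in place. For $(1)\Rightarrow(3)$: if $\alg A\cong K(\alg A^-)$, then since $\alg A^-\in\mathsf{BCRL}$ has least element $0$, the pair $(0,0)$ is a genuine element of $K(\alg A^-)$, and one checks directly from the defining operations of the $K$-expansion that $\nneg(0,0)=(0,0)\imp(0,1)=(0\imp 0)\meet(0\imp 0),\,0\cdot 0\rangle=(1,0)$ — wait, one must be careful: $\nneg$ is relative to the monoid identity $1=(1,0)$ of $K(\alg A^-)$, i.e. $\nneg x = x\imp 1$, and a short computation with the formula for $\imp$ gives $\nneg(0,0)=(0,0)$; similarly $(0,0)\meet(1,0)=(0\meet 1,\,0\join 0)=(0,0)$, which is the bottom element, i.e. $0$ of $\alg A$. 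So ${\bf o}=(0,0)$ witnesses (3). For $(3)\Rightarrow(2)$: given ${\bf o}$ with $\nneg{\bf o}={\bf o}$ and ${\bf o}\meet 1=0$, the four elements $0,{\bf o},1,\top$ (where $\top=\nneg 0$) should form a subuniverse isomorphic to $\alg K_4=K(\alg 2)$; I would verify closure under the operations by translating through the embedding $f:a\mapsto(a\meet 1,\nneg a\meet 1)$ of Lemma \ref{inclusions}, under which $0\mapsto(0,1)$, $1\mapsto(1,0)$, $\top\mapsto(0,1)\cdot$--- more precisely $\top=\nneg 0\mapsto(1,0)$-type computations show $f[\{0,{\bf o},1,\top\}]$ is exactly the universe $2\times 2$ of $K(\alg 2)$. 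Finally $(2)\Rightarrow(1)$: if $\alg K_4\le\alg A$, then in particular the element ${\bf o}$ exists in $\alg A$, and I claim the embedding $f:\alg A\to K(\alg A^-)$ of Lemma \ref{inclusions} is then surjective: given any $(a,b)\in K(\alg A^-)$ with $a,b\le 1$, the element $a\join({\bf o}\cdot(\text{something built from }b))$... the cleaner route is to show that $a\meet 1$ and $\nneg a\meet 1$ can be prescribed independently once ${\bf o}$ is available, so every pair in $A^-\times A^-$ is hit; by Lemma \ref{inclusions} $f$ is already injective, hence an isomorphism.

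The main obstacle I anticipate is the surjectivity argument in $(2)\Rightarrow(1)$. The embedding $f(a)=(a\meet 1,\nneg a\meet 1)$ is always injective for $\alg A\in\mathsf{BKL}$, but its image is in general a proper admissible subalgebra of $K(\alg A^-)$; what the presence of $\alg K_4$ (equivalently, of ${\bf o}$) must buy us is precisely that the image is all of $K(\alg A^-)$. The key computation will be to produce, for an arbitrary target pair $(p,q)\in A^-\times A^-$, a preimage — the natural candidate is something like $(p\imp 0)\imp q$ adjusted by ${\bf o}$, or a join/meet combination of an element with negative part $p$ and the element ${\bf o}$ (which has negative part $0$ but "fills in" the other coordinate). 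I would compute $f$ of such a candidate using the $K$-expansion formulas and the identities $\nneg{\bf o}={\bf o}$, ${\bf o}\meet 1=0$, $\nneg 0=\top$, and check it equals $(p,q)$; once surjectivity holds, injectivity from Lemma \ref{inclusions} finishes it.

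Two remarks on bookkeeping. First, throughout I will use that $\alg A^-\le K(\alg A^-)^-$ and $K(\alg L)^-\cong\alg L$ (Lemma \ref{inclusions}) so that the notation $K(\alg A^-)$ makes unambiguous sense and its negative cone is literally $\alg A^-$. Second, it is worth noting explicitly that $(3)$ is the statement that the uniquely-determined element ${\bf o}$ of the preceding lemma actually \emph{exists} in $\alg A$; so Lemma \ref{abovek3} says that existence of ${\bf o}$, containment of $\alg K_4$, and being a "full" Kalman algebra $K(\alg A^-)$ all coincide, which is exactly the dichotomy that will be exploited when analyzing subvarieties of $\mathsf{BKL}$ below.
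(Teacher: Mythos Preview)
Your architecture matches the paper's: the substance is entirely in showing that the embedding $f:a\mapsto(a\meet 1,\nneg a\meet 1)$ of Lemma~\ref{inclusions} is surjective once ${\bf o}$ exists. (The paper runs the cycle as $1\Rightarrow 2\Rightarrow 3\Rightarrow 1$, declaring the first two immediate and putting the work in $3\Rightarrow 1$; your $(2)\Rightarrow(1)$ immediately passes through ${\bf o}$ anyway, so it is the same step.)

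The genuine gap is that you never produce the preimage, and your candidates do not land. The paper's formula is: given $a,b\in A^-$, take
\[
z \;=\; a \vee (\nneg b \wedge {\bf o}).
\]
By $1$-distributivity $z\meet 1 = (a\meet 1)\vee(\nneg b\meet{\bf o}\meet 1) = a\vee(\nneg b\meet 0)=a$. For the other coordinate, $\nneg z = \nneg a\meet(b\vee{\bf o})$, so $\nneg z\meet 1 = \nneg a\meet\bigl((b\vee{\bf o})\meet 1\bigr)=\nneg a\meet b$; and since $a\le 1$ gives $\nneg a\ge 1\ge b$, this is $b$. Your guess ``$a\join({\bf o}\cdot\text{something})$'' is close, but the operation is $\meet$, not $\cdot$, and the something is $\nneg b$. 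One small correction along the way: the monoid identity of $K(\alg A^-)$ is $(1,1)$, not $(1,0)$ (which is the top $\top$); so the check ${\bf o}\meet 1=0$ reads $(0,0)\meet(1,1)=(0,1)$, the bottom of $K(\alg A^-)$. Your conclusion for $(1)\Rightarrow(3)$ survives, but the intermediate line as written is off.
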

\begin{proof}$1.\Rightarrow 2$ and $2.\Rightarrow 3.$ are immediate, as $(0,0)$ satisfies the equations for $\bf o$. To show that $3.\Rightarrow 1.$, observe that we always have $\alg A \leq K(\alg A^-)$ as $x\mapsto (x\wedge 1, \sim x\wedge 1)$ is an embedding, by Lemma \ref{inclusions}. But if $3.$ holds, it is also onto: if $a,b\in A^-$, then $z=a\vee (\sim b\wedge {\bf o})$ satisfies $z\wedge 1 = a$ and $\sim z\wedge 1=b$.
\end{proof}

Lemma \ref{abovek3} implies that Kalman subvarieties of $\mathsf{BKL}$ will always be above $\VV(\alg K_4)$. More precisely:

\begin{theorem} $\VV(\alg K_4)=K(\mathsf{BA})$ is almost minimal in $\Lambda(\mathsf{BKL})$; moreover it is the only almost minimal Kalman variety.
\end{theorem}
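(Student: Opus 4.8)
The plan is to establish the statement in three stages. First I would identify the variety: since $\mathsf{BA}=\VV(\mathbf 2)$ and $\alg K_4=K(\mathbf 2)$, Lemma~\ref{techlemma}(2) gives $K(\mathsf{BA})=K(\VV(\mathbf 2))=\VV(K(\mathbf 2))=\VV(\alg K_4)$; moreover $K(\mathsf{BA})^-=\mathsf{BA}$ by Lemma~\ref{techlemma}(4), so $K(K(\mathsf{BA})^-)=K(\mathsf{BA})$ and this variety is indeed Kalman.

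Next I would show that $\VV(\alg K_4)$ covers the atom $\VV(\alg K_3)$. The inclusion $\VV(\alg K_3)\subseteq\VV(\alg K_4)$ holds because $\alg K_3\leq\alg K_4$, and it is strict: $\alg K_4$ is subdirectly irreducible --- indeed simple, since $\operatorname{Con}(\alg K_4)\cong\operatorname{Con}(\alg K_4^-)\cong\operatorname{Con}(\mathbf 2)$ is the two-element lattice --- whereas by J\'onsson's Lemma every subdirectly irreducible member of $\VV(\alg K_3)$ lies in $\HH\SU(\alg K_3)$ and so has at most three elements. Now suppose $\VV(\alg K_3)\subseteq\vv V\subseteq\VV(\alg K_4)$. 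As $\mathsf{BKL}$ is congruence distributive and $\alg K_4$ is finite, J\'onsson's Lemma shows every subdirectly irreducible member of $\vv V$ lies in $\HH\SU(\alg K_4)$. A direct inspection shows that $\alg K_3$ and $\alg K_4$ are the only subalgebras of $\alg K_4$ --- any subalgebra contains the constants $0$ and $1$, hence also $\nneg 0$, i.e.\ the universe of $\alg K_3$ --- and that $\alg K_3$ and $\alg K_4$ are both simple; thus the nontrivial members of $\HH\SU(\alg K_4)$ are exactly $\alg K_3$ and $\alg K_4$. Hence either some subdirectly irreducible member of $\vv V$ is isomorphic to $\alg K_4$, so $\alg K_4\in\vv V$ and $\vv V=\VV(\alg K_4)$, or every subdirectly irreducible member of $\vv V$ is isomorphic to $\alg K_3$, so $\vv V\subseteq\VV(\alg K_3)$ and $\vv V=\VV(\alg K_3)$. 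Together with the strictness above, this proves $\VV(\alg K_4)$ covers $\VV(\alg K_3)$, which by Theorem~\ref{onlyatom} is the only atom of $\Lambda(\mathsf{BKL})$; so $\VV(\alg K_4)$ is almost minimal.

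Finally I would prove uniqueness. Let $\vv W$ be any almost minimal Kalman variety; by Theorem~\ref{onlyatom} it is a cover of $\VV(\alg K_3)$, so $\VV(\alg K_3)\subsetneq\vv W$. Since $\vv W$ is nontrivial, so is the subvariety $\vv W^-$ of $\mathsf{BCRL}$; and every nontrivial member of $\mathsf{BCRL}$ has $\{0,1\}$ as a subalgebra isomorphic to $\mathbf 2$, so $\mathbf 2\in\vv W^-$ and therefore $\alg K_4=K(\mathbf 2)\in K(\vv W^-)=\vv W$. Thus $\VV(\alg K_4)\subseteq\vv W$; as $\VV(\alg K_3)\subsetneq\VV(\alg K_4)\subseteq\vv W$ and $\vv W$ covers $\VV(\alg K_3)$, we conclude $\vv W=\VV(\alg K_4)$.

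The step I expect to cause the most trouble is the covering argument: it requires applying J\'onsson's Lemma correctly (the generating class $\{\alg K_4\}$ is finite, so ultrapowers add nothing and $\HH\SU\PP_u(\alg K_4)=\HH\SU(\alg K_4)$) and then pinning down $\HH\SU(\alg K_4)$, in particular checking that $\alg K_3$ is the only proper subalgebra of $\alg K_4$ and that $\alg K_3$ and $\alg K_4$ are both simple. The remaining stages are routine manipulations with the adjunction of Lemma~\ref{techlemma} and with Lemmas~\ref{inclusions} and~\ref{abovek3}.
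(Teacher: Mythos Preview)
Your proof is correct and follows essentially the approach the paper intends. The paper does not give an explicit proof of this theorem; it simply records, just before the statement, that ``Lemma~\ref{abovek3} implies that Kalman subvarieties of $\mathsf{BKL}$ will always be above $\VV(\alg K_4)$'', and leaves both the covering argument and the uniqueness as immediate consequences. Your Stage~2 supplies the standard J\'onsson's Lemma computation of $\HH\SU(\alg K_4)$ that the paper omits, and your Stage~3 reaches the same conclusion as the paper's hint by a slightly different (but equivalent) route: rather than invoking Lemma~\ref{abovek3} to see $\alg K_4\le K(\alg A)$ for some $\alg A\in\vv W^-$, you observe directly that $\mathbf 2\in\vv W^-$ and hence $\alg K_4=K(\mathbf 2)\in K(\vv W^-)=\vv W$. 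Either way yields $\VV(\alg K_4)\subseteq\vv W$ for every nontrivial Kalman variety~$\vv W$.
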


\subsection{Finitely generated almost minimal varieties}

In Section 5 of \cite{AglianoMarcos2020a}, we introduced a subclass of $\mathsf{CIRL}$ from which we could construct finitely generated almost minimal varieties in $\Lambda(\mathsf{KL})$. An algebra $\alg A \in \mathsf{CIRL}$ is {\bf tight} if
\begin{enumerate}
\ib $|A|> 2$;
\ib $\alg A$ is bounded by $0$ and any element different from $0,1$ generates $\alg A$.
\end{enumerate}

It turns out that finite tight algebras describe almost all finitely generated almost minimal varieties in $\mathsf{CIRL}$ :

\begin{theorem}\cite{AglianoGalatosMarcos2020}\label{alltights} If $\alg A\in\mathsf{CIRL}$ is a finite subdirectly irreducible algebra generating an almost minimal variety, then either $\alg A$ is tight, or it isomorphic with the 0-free reduct of either $\alg G_3$ or $\alg N_4$.\end{theorem}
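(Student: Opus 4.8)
The plan is to pin down the atom that $\VV(\alg A)$ covers, extract strong structural consequences, and finish with a short case analysis.

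\emph{Step 1: the atom is $\VV(\mathbf 2)$.} Here $\mathbf 2$ denotes the two-element chain, regarded as a member of $\mathsf{CIRL}$. Since $\alg A$ is finite and $\mathsf{CIRL}$ is congruence distributive, Lemma \ref{jonsson} gives that every subdirectly irreducible algebra in $\VV(\alg A)$ lies in $\HH\SU(\alg A)$, hence is finite. Let $\vv M$ be the atom with $\VV(\alg A)$ a cover of $\vv M$; then $\vv M\sse\VV(\alg A)$ is generated by a finite subdirectly irreducible $\alg B$. As $\{0,1\}$ (with $0$ the least element, which exists by finiteness) is a subalgebra isomorphic to $\mathbf 2$ in every nontrivial finite member of $\mathsf{CIRL}$, we get $\VV(\mathbf 2)\sse\VV(\alg B)=\vv M$, so $\vv M=\VV(\mathbf 2)$; in particular $\alg A\notin\VV(\mathbf 2)$ and $|A|>2$. (The same observation shows $\VV(\mathbf 2)$ is an atom, and is the only atom of $\Lambda(\mathsf{CIRL})$ lying below a finitely generated variety, so no generality is lost.)

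\emph{Step 2: structural consequences.} If $\alg C$ is subdirectly irreducible in $\VV(\alg A)$ with $|C|<|A|$ then $\VV(\alg C)\subsetneq\VV(\alg A)$ (otherwise $\alg A$, being subdirectly irreducible, would embed into $\HH\SU(\alg C)$ by Lemma \ref{jonsson}, forcing $|A|\le|C|$), so $\alg C\in\VV(\mathbf 2)$; writing a finite algebra as a subdirect product of its subdirectly irreducible quotients, we conclude that \emph{every proper subalgebra and every proper quotient of $\alg A$ lies in $\VV(\mathbf 2)$}, in particular is idempotent. This yields the reformulation: \emph{$\alg A$ is not tight iff it has an idempotent $e$ with $0<e<1$.} Indeed, if $c\neq0,1$ and $\la c\ra\neq\alg A$ then $\la c\ra\in\VV(\mathbf 2)$ forces $c^2=c$ and $\la c\ra=\{c,1\}$; conversely any idempotent $e\in(0,1)$ generates $\{e,1\}\neq\alg A$.

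\emph{Step 3: the deductive-filter analysis.} Assume $\alg A$ is not tight and fix an idempotent $e\in(0,1)$. Then $\upa e$ is a proper nontrivial deductive filter, so $\alg A$ is not simple; let $\upa f$ be the least nontrivial deductive filter ($\alg A$ being subdirectly irreducible). Every deductive filter is closed under $\join,\meet,\cdot,\imp$, hence is a subalgebra, so $\upa f$ is Boolean; if $|{\upa}f|\ge3$ it would contain an idempotent $g$ with $f\le g<1$, $g\neq f$, and then $\upa g$ would be a smaller nontrivial deductive filter — impossible. Thus $\upa f=\{f,1\}$ with $f$ an idempotent coatom. Moreover $\alg A/\{f,1\}$ is a proper quotient, hence $\cong\mathbf 2^m$; and $m=1$, for otherwise there is a proper deductive filter $\upa f_1\supsetneq\upa f$ that is Boolean with $|{\upa}f_1|\ge4$, each of whose atoms $h$ satisfies $\upa h\supseteq\upa f$ and hence $h\le f$, so $f$ dominates the join of all atoms of $\upa f_1$, i.e. $f=1$ — a contradiction. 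Hence the congruence collapsing $\{f,1\}$ has exactly the two classes $\{f,1\}$ and $C:=A\setminus\{f,1\}\ni0$, and $\alg A/\{f,1\}\cong\mathbf 2$.

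\emph{Step 4: size bound and enumeration.} One shows $|C|\le2$, hence $|A|\le4$: a nonzero element of $C$ is non-idempotent (a nonzero idempotent in $C$ would lie strictly below $f$ and, by the argument of Step 3, generate a deductive filter smaller than $\upa f$), so it generates $\alg A$ by Step 2; if $C$ had a third element one would exhibit a proper non-Boolean subalgebra generated by such an element, contradicting Step 2. Making this precise — turning ``an extra element of $C$ yields a proper non-Boolean subalgebra'' into a rigorous argument via a careful look at how the operations act on the two-element quotient $\alg A/\{f,1\}\cong\mathbf 2$ and on the subalgebras thereby generated — is the technical heart of the proof, and the step I expect to be the main obstacle. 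Once $|A|\le4$: if $|A|=3$ then $\alg A=\{0<a<1\}$ with $a$ idempotent, which forces $\cdot=\meet$ and $\imp$ the relative pseudocomplement, i.e. $\alg A$ is the $0$-free reduct of $\alg G_3$; if $|A|=4$ then $\alg A$ must be a chain (the only non-chain four-element lattice is $\mathbf 2^2$, which carries only the Boolean structure and is not subdirectly irreducible), say $\{0<a<b<1\}$ with an idempotent $x$ among $a,b$, and then $\{0,x,1\}$ is a subalgebra isomorphic to $\alg G_3$ — contradicting Step 2 — unless $x\imp0\notin\{0,x,1\}$, which can occur only with $x=b$, $b\imp0=a$, $ba=0$ and $a^2=0$, i.e. when $\alg A$ is the involutive chain, the $0$-free reduct of $\alg N_4$.
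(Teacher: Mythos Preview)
The paper does not prove this theorem: it is quoted from \cite{AglianoGalatosMarcos2020} and stated without proof, so there is nothing to compare your argument against. I can only assess your proposal on its own merits.

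Steps~1--3 are sound. The identification of the atom as $\VV(\mathbf 2)$, the reduction ``not tight $\Leftrightarrow$ there is an idempotent in $(0,1)$'', the fact that the monolith filter is $\{f,1\}$ with $f$ an idempotent coatom, and the proof that $\alg A/\{f,1\}\cong\mathbf 2$ (via the observation that a larger Boolean filter would have several atoms, all below $f$, with join $1$) are all correct and nicely argued. The case analysis at the very end for $|A|=3$ and $|A|=4$ is also essentially right.

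The genuine gap is exactly where you flag it: the bound $|C|\le 2$. Your one-line justification --- ``if $C$ had a third element one would exhibit a proper non-Boolean subalgebra generated by such an element'' --- is not an argument, and in fact conflicts with what you just proved: every nonzero element of $C$ generates \emph{all} of $\alg A$, so it cannot generate a proper subalgebra. What actually forces $|C|\le 2$ is a more delicate interaction between (i) the subalgebra $\langle 0,f\rangle$, which must contain $c_0:=f\imp 0\in C\setminus\{0\}$ and hence has at least four elements, and (ii) the requirement that any proper subalgebra be a Boolean algebra, hence not a chain with more than two elements. For instance, on a five-element chain $0<a<b<f<1$ one checks that either $\{0,f\imp 0,f,1\}$ is a proper four-element chain subalgebra (hence non-Boolean), or associativity/monotonicity of the product fails; and non-chain lattices are handled by similar residuation obstructions. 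None of this is in your write-up, and the phrase ``a careful look at how the operations act'' does not substitute for it. Until that bound is established your proof is incomplete.
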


The definition of tight algebras would make sense also for algebras in $\mathsf{BCRL}$ but there is a relevant difference: in $\mathsf{CIRL}$ every filter is a subalgebra, so the second condition above implies that any tight algebra is simple. Since in $\mathsf{BCRL}$ a filter is not in general a subalgebra, simplicity may not hold. Therefore we give a different and more general definition.

We say an algebra $\alg A \in \mathsf{BCRL}$ is \textbf{rigid} if:
\begin{itemize}
	\item $|A|>2$;
	\item $\alg A$ is subdirectly irreducible;
	\item $\alg A$ has no proper subalgebras different from $\{0,1\}$;
	\item for all proper nontrivial $\theta\in \Con A$, $\alg A/\theta \cong \alg 2$.
\end{itemize}

It is clear from the definition that the algebras $\alg G_3$ and $\alg N_4$ are rigid.  Observe that any stiff algebra in the sense of  \cite{GJKO,KKU2006} is rigid; the following results will show the importance of finite rigid algebras.

\begin{lemma}\label{stiff} Let $\alg A \in \mathsf{BCRL}$ be rigid; then $\alg A$ has at most one proper nontrivial congruence, i.e. $\Con A$ is either the two or three-element chain.\end{lemma}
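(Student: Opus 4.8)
The plan is to use the congruence structure forced by rigidity, reasoning through the correspondence between $\Con{A}$ and $\op{Con}(\alg A^-)$ only as needed, but really the argument is internal to $\alg A$. Suppose toward a contradiction that $\alg A$ has two distinct proper nontrivial congruences $\theta_1,\theta_2$. By the last clause of the definition of rigid, $\alg A/\theta_1\cong\alg A/\theta_2\cong\alg 2$, so each $\theta_i$ is a \emph{coatom} in $\Con{A}$ with two-element quotient. Since $\alg A$ is subdirectly irreducible, $\Con{A}$ has a monolith $\mu$, and $\mu\le\theta_1\meet\theta_2$ because $\theta_1,\theta_2$ are nontrivial. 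The first step is to observe that $\theta_1\meet\theta_2$ is again a proper congruence, hence (being nontrivial, as it contains $\mu$) by rigidity $\alg A/(\theta_1\meet\theta_2)\cong\alg 2$ as well.

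The second step is to derive the contradiction from the fact that a two-element quotient cannot sit properly below two distinct two-element quotients. Concretely: $\alg A/(\theta_1\meet\theta_2)$ embeds subdirectly into $\alg A/\theta_1\times\alg A/\theta_2\cong\alg 2\times\alg 2$ via the natural map, and its image is a subdirect subalgebra of $\alg 2^2$. But $\alg A/(\theta_1\meet\theta_2)$ has exactly two elements while any proper subdirect subalgebra of $\alg 2^2$ that projects onto both factors is either the diagonal (if $\theta_1=\theta_2$, excluded) or all of $\alg 2^2$ (which has four elements); there is no two-element subdirect subalgebra of $\alg 2^2$ other than the diagonal. Since we assumed $\theta_1\ne\theta_2$, the map $\alg A/(\theta_1\meet\theta_2)\to\alg 2^2$ is injective with image a subdirect subalgebra, forcing $|\alg A/(\theta_1\meet\theta_2)|\ge 4$ unless $\theta_1=\theta_2$ — contradiction. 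Hence $\alg A$ has at most one proper nontrivial congruence, so $\Con{A}$ is a chain of length at most $2$, i.e.\ the two- or three-element chain.

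The one point requiring a little care — and the step I'd flag as the main obstacle — is ruling out the \emph{diagonal} possibility cleanly, i.e.\ making sure that $\theta_1\ne\theta_2$ really does force the joint map to have image all of $\alg 2^2$ rather than a two-element subalgebra. Here one uses that $\alg 2$ is simple: if the image were two-element and subdirect, both coordinate projections would be bijections, so the image would be the graph of an automorphism of $\alg 2$; but $\alg 2$ is rigid as an algebra (its only automorphism is the identity), so the image would be the diagonal, forcing the kernels $\theta_1,\theta_2$ of the two projections to coincide — contradiction. Alternatively, and perhaps more transparently in this context, one can argue directly with the monolith: $\theta_1\meet\theta_2\supseteq\mu$, and if $\theta_1\ne\theta_2$ then, choosing $(a,b)\in\theta_1\setminus\theta_2$ and $(c,d)\in\theta_2\setminus\theta_1$, the congruence generated by these witnesses that $\theta_1\join\theta_2$ is strictly above both, hence by rigidity equals $A\times A$ (the only congruence above a coatom being the top, since two-element quotients correspond to coatoms); then the standard fact that complemented coatoms in a modular — here distributive, as $\alg A$ has a lattice reduct — congruence lattice force a direct decomposition gives $\alg A\cong\alg A/\theta_1\times\alg A/\theta_2\cong\alg 2\times\alg 2$, contradicting that $\alg A$ is subdirectly irreducible. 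Either route is short; I'd present the second, since congruence distributivity of $\mathsf{BCRL}$ is already available and the "coatoms above the monolith" picture is the cleanest.
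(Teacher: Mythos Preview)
Your argument is correct, but it takes a longer route than the paper's. The paper argues directly from the monolith: if $\mu$ is proper then by rigidity $\alg A/\mu\cong\alg 2$, and since $\alg 2$ is simple the correspondence theorem gives that nothing lies strictly between $\mu$ and $A\times A$; concretely, any $\theta\supsetneq\mu$ contains a pair $(a,b)$ with $a$ and $b$ in distinct $\mu$-classes, hence $(0,1)\in\theta$. That is the whole proof.

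Your approach instead assumes two distinct proper nontrivial congruences $\theta_1,\theta_2$ and derives a contradiction. This works, but the subdirect-product and automorphism-of-$\alg 2$ discussion is unnecessary machinery. Once you have established that $\alg A/(\theta_1\meet\theta_2)\cong\alg 2$, you are already done by the correspondence theorem: the interval $[\theta_1\meet\theta_2,\,A\times A]$ in $\Con A$ is isomorphic to $\op{Con}(\alg 2)$, which has exactly two elements, so $\theta_1$ and $\theta_2$ (both lying in that interval and both proper) must each equal $\theta_1\meet\theta_2$, contradicting $\theta_1\ne\theta_2$. No appeal to subdirect embeddings, rigidity of $\alg 2$ as an algebra, or congruence distributivity is needed. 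In effect, the paper applies this same observation to $\mu$ rather than to $\theta_1\meet\theta_2$, which is cleaner since it avoids the hypothetical pair of congruences altogether.
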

\begin{proof}If $\mu$ is the monolith and it is a proper congruence (otherwise we are done), then $\alg A/\mu \cong \alg 2$. If $\theta\supsetneq \mu$, it must be the congruence $\theta=A\times A$, as it must have a pair $(a,b)$ such that $(a,0),(b,1)\in\mu$.\end{proof}

\begin{theorem}\label{stiff2} The following results hold in $\mathsf{BCRL}$.
\begin{enumerate}
	\item If $\alg A\in\mathsf{BCRL}$ is a finite subdirectly irreducible algebra, then $\VV(\alg A)$ is an almost minimal variety if and only if $\alg A$ is rigid.
	\item If $\alg A\in\mathsf{BCRL}$ satisfies that the 0-free reduct of $\alg A$ is a tight algebra in $\mathsf{CIRL}$, then it is rigid.
\end{enumerate}
\end{theorem}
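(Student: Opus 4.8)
The plan is to prove the two parts separately, and I expect part (1), especially the "only if" direction paired with the characterization of almost minimality, to carry most of the weight; part (2) should then be a comparatively short structural argument.

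For part (1), first I would dispose of the easy direction. Suppose $\alg A$ is finite and rigid. Since $\mathsf{BCRL}$ is congruence distributive, J\'onsson's Lemma (Lemma~\ref{jonsson}) tells us that the finitely subdirectly irreducible members of $\VV(\alg A)$ lie in $\HH\SU(\alg A)$. By rigidity, the only proper subalgebra of $\alg A$ is $\{0,1\}$, which generates the trivial-type variety generated by $\alg 2$; and by Lemma~\ref{stiff}, $\Con{\alg A}$ is a chain of length at most $3$, so the only nontrivial proper quotient of $\alg A$ is $\alg 2$. Hence every nontrivial proper subvariety of $\VV(\alg A)$ has all its subdirectly irreducibles equal to $\alg 2$, i.e. equals $\VV(\alg 2)$. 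Since $|A|>2$ forces $\alg A\not\cong\alg 2$, the variety $\VV(\alg 2)$ is strictly below $\VV(\alg A)$, and it is the unique atom below it; thus $\VV(\alg A)$ covers $\VV(\alg 2)$. One should check $\VV(\alg 2)$ is indeed an atom of $\Lambda(\mathsf{BCRL})$ (every nontrivial member has a two-element subalgebra on $\{0,1\}$), which gives almost minimality.

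For the converse, assume $\alg A$ is finite subdirectly irreducible and $\VV(\alg A)$ is almost minimal. I would argue the four defining clauses of rigidity in turn. First, $|A|>2$: if $|A|=2$ then $\alg A\cong\alg 2$ and $\VV(\alg A)$ is itself an atom, not a cover of one. Next, subdirect irreducibility is assumed. For the "no proper subalgebras other than $\{0,1\}$" clause: any subalgebra $\alg B\le\alg A$ generates a subvariety $\VV(\alg B)\subseteq\VV(\alg A)$; by J\'onsson all its subdirectly irreducibles are in $\HH\SU(\alg B)\subseteq\HH\SU(\alg A)$. If $\alg B$ is nontrivial and not all of $\alg A$, then $\VV(\alg B)$ sits between the atom $\VV(\alg 2)$ and $\VV(\alg A)$; almost minimality forces $\VV(\alg B)=\VV(\alg 2)$, and then one must upgrade this to $\alg B\cong\alg 2$ — here Lemma~\ref{jonsson}(2) helps: a finite subdirectly irreducible generating $\VV(\alg 2)$ is isomorphic to $\alg 2$, and a finite algebra whose every subdirectly irreducible quotient is $\alg 2$ and which itself embeds in a product of copies of $\alg 2$ is a Boolean algebra; combined with being a subalgebra of the subdirectly irreducible $\alg A$ one pins down $\alg B=\{0,1\}$. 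Finally, for the congruence clause: for $\theta\in\Con{\alg A}$ proper and nontrivial, $\alg A/\theta$ is a nontrivial proper quotient, so $\VV(\alg A/\theta)\subseteq\VV(\alg A)$ is nontrivial; almost minimality gives $\VV(\alg A/\theta)=\VV(\alg 2)$, and finiteness plus subdirect irreducibility of $\alg A/\theta$ (pick $\theta$ so that the quotient is subdirectly irreducible, or just note $\alg A/\theta$ is subdirectly irreducible when $\theta$ is meet-irreducible, and reduce to that case) gives $\alg A/\theta\cong\alg 2$ by Lemma~\ref{jonsson}(2). The main obstacle is the bookkeeping needed to go from "generates the variety $\VV(\alg 2)$" to "is literally $\{0,1\}$ as a set", i.e. ruling out larger Boolean subalgebras — this uses that $\alg A$ is subdirectly irreducible (so it cannot have a nontrivial Boolean factor splitting off) together with the fact that a Boolean algebra with more than two elements is not subdirectly irreducible, so a Boolean subalgebra of $\alg A$ with $>2$ elements would still not contradict anything directly; the clean way is instead to observe that if $\{0,1\}\subsetneq\alg B\le\alg A$ with $\alg B$ Boolean then $\alg B$ has a proper nontrivial congruence, contradicting... no — rather, one shows any such $\alg B$ already has $\VV(\alg B)=\VV(\alg 2)$ forcing $\alg B$ itself to be a (possibly large) Boolean algebra, and then refines using that $\alg A$ has a monolith: a Boolean subalgebra cannot contain two distinct atoms of $\alg A$'s order both above $0$ compatibly — this is the delicate point and I would handle it by a direct inspection using the $\mathsf{BKL}$/$\mathsf{BCRL}$ structure.

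For part (2), suppose the $0$-free reduct of $\alg A$ is a tight algebra $\alg A^\circ\in\mathsf{CIRL}$. Tightness gives $|A|>2$ immediately. For subdirect irreducibility: in $\mathsf{CIRL}$, tightness forces $\alg A^\circ$ to be simple (every filter is a subalgebra, and the second tightness clause says the only subalgebras are $\{1\}$ and $\alg A^\circ$, hence the only filters are these, hence $\alg A^\circ$ is simple), and adding the constant $0$ can only coarsen the congruence lattice by identifying... no, adding a constant can only \emph{shrink} the congruence lattice (fewer congruences), so $\alg A$ is simple as well, in particular subdirectly irreducible; and then the last rigidity clause is vacuous (there are no proper nontrivial $\theta$). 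For the "no proper subalgebras other than $\{0,1\}$" clause: a subalgebra $\alg B\le\alg A$ has a $0$-free reduct $\alg B^\circ\le\alg A^\circ$; by tightness $\alg B^\circ\in\{\{1\},\alg A^\circ\}$; but $\alg B$ also contains $0$, so $\alg B^\circ\ne\{1\}$ unless $0=1$ (excluded since $|A|>2$), giving $\alg B^\circ=\alg A^\circ$ when $\alg B\ne\{0,1\}$... wait, $\{0,1\}$ has $0$-free reduct $\{1\}$, which is allowed; and any $\alg B$ strictly containing an element $\ne0,1$ has $\alg B^\circ\ni$ that element, which by tightness generates $\alg A^\circ$, so $\alg B\supseteq A$. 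Thus the only proper subalgebra is $\{0,1\}$. Assembling these four points gives rigidity. The only subtlety here is making sure that "adding the constant $0$ shrinks $\Con{}$" is applied correctly — congruences of $\alg A$ must preserve $0$, so they form a sublattice (in fact subset) of the congruences of $\alg A^\circ$ that happen to be compatible, hence there are no more of them than in the simple algebra $\alg A^\circ$, forcing simplicity of $\alg A$; this is routine but worth stating explicitly.
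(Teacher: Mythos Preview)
Your Part~2 argument is essentially the paper's: tightness of the $0$-free reduct gives $|A|>2$; since filters are $\mathsf{CIRL}$-subalgebras, tightness forces simplicity of $\alg A^\circ$, and congruences are unchanged by adding a constant (so $\alg A$ is simple in $\mathsf{BCRL}$, making the congruence clause vacuous); finally any $c\in A\setminus\{0,1\}$ generates $A$ already as a $\mathsf{CIRL}$-algebra, so the only proper $\mathsf{BCRL}$-subalgebra is $\{0,1\}$. Your one slip --- ``$\{0,1\}$ has $0$-free reduct $\{1\}$'' --- is wrong (the reduct still has universe $\{0,1\}$), but the conclusion survives since any $\mathsf{BCRL}$-subalgebra must contain the constant $0$, so its reduct has at least two elements.

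Your Part~1 forward direction is fine and matches the paper. The converse, however, has a real gap, which you yourself flag. From ``$\VV(\alg B)=\VV(\alg 2)$'' you only get that a proper subalgebra $\alg B$ is Boolean, not that $|B|=2$; your attempts to close this via ``$\alg A$ has a monolith'' or ``direct inspection'' are not an argument. The missing idea is: a Boolean subalgebra with more than two elements yields an element $a\in A$ with $a^2=a$, $(\neg a)^2=\neg a$, $a\vee\neg a=1$ and $a\wedge\neg a=0$ (all computed inside $\alg B$, hence inside $\alg A$). The idempotent filters $\mathord{\uparrow} a$ and $\mathord{\uparrow}\neg a$ are then complementary \emph{and} permuting (take $z=ax\vee(\neg a)y$), so $\alg A$ decomposes as a nontrivial direct product, contradicting subdirect irreducibility. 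This forces $\alg B=\{0,1\}$.

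Your quotient argument is also incomplete: restricting to meet-irreducible $\theta$ does not establish the clause for \emph{all} proper nontrivial $\theta$, and you give no reduction. Once the subalgebra clause is in hand, the clean route is: if $|A/\theta|>2$, then the preimage of $\{0,1\}\le\alg A/\theta$ is a proper subalgebra of $\alg A$, hence equals $\{0,1\}$, so $[1]_\theta=\{1\}$ and $\theta=\Delta$, a contradiction. Thus every proper nontrivial quotient is $\alg 2$. The paper compresses all of this into the word ``immediate'', but the substance you are missing is exactly the complemented-idempotent/direct-decomposition step.
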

\begin{proof}The first part is immediate, as the definition of rigid algebras and Lemma \ref{stiff} characterize the finitely generated covers of $\VV(\alg 2)=\mathsf{BA}$.

For the second part, it is clear that if the 0-free reduct of $\alg A$ is tight, then any $a\neq 0,1$ generates the whole $A$ as an algebra in $\mathsf{CIRL}$, but as it contains 0 it implies that $\alg A$ has no subalgebras other than $\alg A$ and $\{0,1\}$ as an algebra in $\mathsf{BCRL}$. Moreover, as filters are subalgebras in $\mathsf{CIRL}$, we have that $\alg A$ is subdirectly irreducible and does not have any non-trivial proper congruences. Therefore $\alg A$ is rigid.
\end{proof}

The converse of the second part of Theorem \ref{stiff2} is false, however, as there are new covers of the atom $\VV(\alg 2)=\mathsf{BA}$. For instance, the reader can verify that the algebra described in Figure \ref{stiffnottight} is rigid, but it is clearly not tight as an algebra in $\mathsf{CIRL}$ (and it is neither $\alg G_3$ nor $\alg N_4$, so by Theorem \ref{alltights} it is a distinct cover).

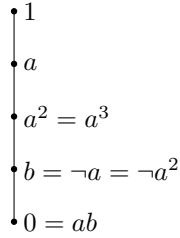
\begin{figure}[htbp]
\begin{center}
\begin{tikzpicture}[scale=0.7]
\draw (0,0) -- (0,1) -- (0,2) -- (0,3) -- (0,4);
\draw[fill] (0,0) circle [radius=0.05];
\draw[fill] (0,1) circle [radius=0.05];
\draw[fill] (0,2) circle [radius=0.05];
\draw[fill] (0,3) circle [radius=0.05];
\draw[fill] (0,4) circle [radius=0.05];
\node[right] at (0,0) {$0=ab$};
\node[right] at (0,1) {$b=\neg a =\neg a^2$};
\node[right] at (0,2) {$a^2=a^3$};
\node[right] at (0,3) {$a$};
\node[right] at (0,4) {$1$};
\end{tikzpicture}
\end{center}
\caption{An algebra that generates a cover of $\VV(\alg 2)$ in $\mathsf{BCRL}$ but not in $\mathsf{CIRL}$.\label{stiffnottight}}
\end{figure}

We will now proceed to show the importance of rigid algebras for $\mathsf{BKL}$.

\begin{lemma}\label{stiffk3} Let $\alg A \in \mathsf{BCRL}$ be rigid having a non-trivial proper congruence; then there exists a proper admissible subalgebra of $K(\alg A)$ such that its only proper nontrivial quotient is isomorphic to $\alg K_3$.
\end{lemma}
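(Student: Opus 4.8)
The plan is to take the rigid algebra $\alg A$ with its unique nontrivial proper congruence $\mu$ (which exists by Lemma \ref{stiff}, with $\alg A/\mu \cong \alg 2$), and build an admissible subalgebra $\alg S$ of $K(\alg A)$ by choosing the ``right'' lattice filter to glue onto. The natural candidate is $K(\alg A, F)$ for a suitably small filter $F$; since $\alg A$ need not be involutive or Heyting we cannot invoke Theorems \ref{kalmanwajsberg} or \ref{kalmanheyting} directly, but $K_0(\alg A)$, the minimal admissible subalgebra of $K(\alg A)$, is always available. So the first step is to set $\alg S := K_0(\alg A)$ and check it is \emph{proper} in $K(\alg A)$: this is exactly Lemma \ref{abovek3}, since $K_0(\alg A) = K(\alg A)$ would force the element $\bf o = (0,0)$ to lie in $K_0(\alg A)$, and one checks $(0,0)$ is not forced into the minimal admissible subalgebra when $\alg A$ has more than two elements. (If need be, one works with $K(\alg A,F)$ for $F$ the smallest regular-type filter, i.e.\ the filter generated by the elements $(a \vee \nneg a)\wedge 1$.)

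Next I would analyze $\op{Con}(\alg S)$. Since $\alg S$ is admissible, $S^- = A^-$, and by the correspondence $\op{Con}(\alg S) \cong \op{Con}(\alg S^-) = \op{Con}(\alg A^-) \cong \op{Con}(\alg A)$ recalled in the preliminaries (the isomorphism $\alpha \mapsto \alpha^-$, $\theta \mapsto \op{Cg}_{\alg S}(\theta)$). Hence $\op{Con}(\alg S)$ is also a three-element chain $\Delta \subsetneq \nu \subsetneq \nabla$, where $\nu$ corresponds to $\mu$ under the isomorphism. It remains to identify the quotient $\alg S/\nu$. Because $(\alg S/\nu)^- \cong \alg S^-/\nu^- \cong \alg A^-/\mu^- \cong (\alg A/\mu)^- \cong \alg 2^- = \alg 2$, the algebra $\alg S/\nu$ is an admissible-type algebra in $\mathsf{BKL}$ whose negative cone is $\alg 2$; the only candidates are $\alg K_3$ and $\alg K_4$. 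To rule out $\alg K_4$ one argues that $\bf o \notin \alg S$ (established in the first step) and that the quotient map $\alg S \to \alg S/\nu$ cannot create an element satisfying the equations for $\bf o$ from elements that do not — more carefully, if $\alg S/\nu \cong \alg K_4$ then by Lemma \ref{abovek3} it contains $\bf o$, and one lifts this back: the preimage argument, together with the fact that $\nu$ collapses only the ``top copy'' onto the ``bottom copy'' of $\alg A$ down to $\alg 2$, shows $\alg S$ itself would contain an $\bf o$, contradiction. Thus $\alg S/\nu \cong \alg K_3$, and since $\nu$ is the unique proper nontrivial congruence, $\alg K_3$ is the only proper nontrivial quotient of $\alg S$.

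The main obstacle I expect is the last identification step: showing the $\nu$-quotient lands on $\alg K_3$ rather than $\alg K_4$, i.e.\ controlling what happens to the ``$\bf o$-type'' elements under the quotient. The cleanest route is probably not to argue abstractly but to use Theorem \ref{admissibleordinalsum} or an explicit description of $\alg S$ as a subset of $A \times A$: one shows $\alg S$ consists of pairs $(a,b)$ with $a \vee b$ (or $a \oplus b$) lying in a prescribed filter $F$, computes that $\nu$ is induced by the congruence on $A$ with class structure matching $\mu$, and reads off directly that the $\nu$-classes of $\alg S$ are precisely the three pairs $\{(0,1),(1,1),(1,0)\}$ modulo the collapse — giving $\alg K_3$. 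A secondary subtlety is making sure the chosen $\alg S$ is genuinely \emph{admissible} and genuinely a \emph{subalgebra} (closure under the four operations of $K(\alg A)$), but this is routine given the formulas for the K-expansion and the filter conditions, and parallels the verifications in \cite{BusanicheCignoli2014} and \cite{AglianoMarcos2020a}.
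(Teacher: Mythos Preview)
Your overall architecture is sound (pick an admissible subalgebra, transport the three-element congruence chain via the $\op{Con}(\alg S)\cong\op{Con}(\alg A)$ isomorphism, then identify the nontrivial quotient), but the two steps you flag as obstacles are genuine gaps and you do not close them. First, ``one checks $(0,0)$ is not forced into $K_0(\alg A)$'' is not routine: the generators $(a,1)$ already produce elements like $(\neg a,a)$ via implication, and without extra hypotheses on $\alg A$ (e.g.\ meet-irreducibility of $0$, or being Heyting/involutive so that Theorems \ref{kalmanwajsberg}--\ref{kalmanheyting} apply) there is no obvious barrier preventing elements of $[0]_\mu\times[0]_\mu$ from appearing. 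Second, your argument that the $\nu$-quotient cannot be $\alg K_4$ because ``quotients cannot create an $\mathbf o$'' is false as stated: any $(a,b)$ with $a,b\in[0]_\mu$ maps to $\mathbf o$ in $K(\alg A)/\th\cong\alg K_4$, so the real question is precisely whether such pairs lie in $K_0(\alg A)$ --- which is the same unresolved point again.

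The paper sidesteps both issues by choosing the admissible subalgebra differently. Instead of $K_0(\alg A)$, it takes $\alg B$ to be the \emph{preimage of $\alg K_3$} under the quotient map $K(\alg A)\twoheadrightarrow K(\alg A)/\th\cong\alg K_4$, where $\th=\op{Cg}_{K(\alg A)}(\mu)$. Since $\alg K_3\le\alg K_4$, this preimage is automatically a subalgebra; it is admissible because every $(a,1)$ maps to an element of $K_3$ (as $1\not\sim_\mu 0$); it is proper because $(0,0)\notin B$; and $\alg B/(\th\cap B^2)\cong\alg K_3$ by construction. No computation with generators is needed. Incidentally, once you have the paper's $\alg B$ you get your missing facts for free: $K_0(\alg A)\subseteq\alg B$ by minimality, hence $(0,0)\notin K_0(\alg A)$ and $K_0(\alg A)/\nu$ embeds in $\alg K_3$. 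So your approach can be rescued, but only by essentially invoking the paper's construction as a lemma.
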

\begin{proof} By Lemma \ref{stiff}, let $\mu$ be the proper monolith of $\alg A$ and let $\th = \op{Cg}_{K(\alg A)}(\mu)$, so that $\th^-=\mu$. Clearly $K(\alg A)/\theta \cong \alg K_4$, and consider the set
$$
B = \{(a,b)\in K(\alg A): (a,b)/\theta \in \alg K_3\} = \{(a,b)\in K(\alg A): ((a,b),(0,0))\not\in\theta\}.
$$
Then $B$ is nonempty ($(0,1),(1,0),(1,1)\in B$); clearly it is the universe of an admissible subalgebra $\alg B$ of $K(\alg A)$ and moreover $\alg B/(\theta\cap B\times B) \cong \alg K_3$.  As $\theta\cap B\times B$ is the only proper nontrivial congruence of $\alg B$, the Lemma holds.
\end{proof}

\begin{theorem}\label{cover}  Let $\alg A \in \mathsf{BKL}$ be a finite subdirectly irreducible algebra that generates an almost minimal variety different from $\VV(\alg K_4) = K(\mathsf{BA})$. Then  $\alg A^-$ is rigid.

On the other hand, let $\alg A$ be a finite rigid algebra in $\mathsf{BCRL}$ such that $0$ is meet irreducible. Then $K(\alg A)$  has a subalgebra that generates an almost minimal variety  different from $K(\mathsf{BA})$.
\end{theorem}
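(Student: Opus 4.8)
The plan is to treat the two implications separately, in both cases leaning on the machinery of Section \ref{S1.prelim}: the functor $K$ and its adjoint $(\cdot)^-$, the congruence isomorphism $\Con B\cong\op{Con}(\alg B^-)$ for $\alg B\in\mathsf{BKL}$, and J\'onsson's Lemma --- which, since every algebra in sight is finite, always reduces ``$\alg S$ is subdirectly irreducible in $\VV(\vv K)$'' to ``$\alg S\in\HH\SU(\vv K)$''. One further remark to be used repeatedly: for $\alg D\in\mathsf{BCRL}$, $K_0(\alg D)$ is the subalgebra of $K(\alg D)$ generated by its negative cone, and a subalgebra of $K(\alg D)$ is admissible iff it contains that cone.

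For the first statement I would start from the fact that $\alg A^-$ is subdirectly irreducible because $\alg A$ is. Since $\VV(\alg A)$ and $\VV(\alg K_4)=K(\mathsf{BA})$ are distinct covers of the atom $\VV(\alg K_3)$, neither contains the other, so $\alg K_4\notin\VV(\alg A)$; by Lemma \ref{abovek3} this gives $\alg A\not\cong K(\alg A^-)$, $\alg A$ has no element ${\bf o}$, and (finiteness and J\'onsson) $\alg K_4\not\le\alg A$. Viewing $\alg A$ as an admissible subalgebra of $K(\alg A^-)$ via Lemma \ref{inclusions} one gets $K_0(\alg A^-)\le\alg A$, with $K_0(\alg A^-)$ subdirectly irreducible and nontrivial; moreover $\VV(K_0(\alg A^-))\ne\VV(\alg K_3)$ (otherwise $\alg A^-\cong\alg 2$, so $\alg A$ embeds in $K(\alg 2)=\alg K_4$ and is $\alg K_3$ or $\alg K_4$, against the hypotheses). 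Hence $\VV(K_0(\alg A^-))=\VV(\alg A)$, and J\'onsson together with $K_0(\alg A^-)\le\alg A$ and a cardinality count forces $\alg A=K_0(\alg A^-)$ and $|A^-|>2$. Now one checks rigidity of $\alg A^-$ clause by clause. If $\{0,1\}\ne\alg C\subsetneq\alg A^-$ is a subalgebra then $K_0(\alg C)$ is a proper nontrivial subalgebra of $K_0(\alg A^-)=\alg A$, so the same argument gives $\VV(K_0(\alg C))=\VV(\alg K_3)$, whence $\alg C=K_0(\alg C)^-\in\mathsf{BA}$; but a subdirectly irreducible member of $\mathsf{BCRL}$ is finitely subdirectly irreducible, so $1$ is join-irreducible in $\alg A^-$, $\alg 2\times\alg 2$ does not embed, and therefore $\alg C=\{0,1\}$. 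If $\theta\in\op{Con}(\alg A^-)$ is proper and nontrivial, then through the congruence isomorphism, Lemma \ref{techlemma} and functoriality of $K$ the corresponding quotient of $\alg A$ is $\cong K_0(\alg A^-/\theta)$ and must generate $\VV(\alg K_3)$; writing a finite member of $\VV(\alg K_3)$ as a subdirect subalgebra of a power of $\alg K_3$, and using that subdirect irreducibility of $\alg A$ bounds the size of these quotients, one gets $\alg A^-/\theta\cong\alg 2$. So $\op{Con}(\alg A^-)$ is a two- or three-element chain and $\alg A^-$ is rigid.

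For the second statement, let $\alg A\in\mathsf{BCRL}$ be finite, rigid, with $0$ meet-irreducible, and set $\alg B:=K_0(\alg A)\le K(\alg A)$. As $\alg A$ is rigid with $|A|>2$ it is not Boolean, so $\alg B^-=\alg A$ gives $\VV(\alg B)^-=\VV(\alg A)\ne\mathsf{BA}$, hence $\VV(\alg B)\ne K(\mathsf{BA})$; and $\alg K_3=K_0(\alg 2)\le K_0(\alg A)=\alg B$, so $\VV(\alg K_3)\subsetneq\VV(\alg B)$. The crucial point is that $\alg B\subsetneq K(\alg A)$, equivalently ${\bf o}\notin\alg B$: if $\alg A$ has a proper nontrivial congruence this follows from Lemma \ref{stiffk3}, which produces a proper admissible subalgebra of $K(\alg A)$; if $\alg A$ is simple it is here that ``$0$ meet-irreducible'' is used, to see that the subalgebra generated by the negative cone never reaches ${\bf o}=\la 0,0\ra$. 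By Lemma \ref{abovek3}, ${\bf o}\notin\alg B$ gives $\alg K_4\not\le\alg B$. Rigidity then controls $\SU(\alg B)$: any $\alg T\le\alg B$ has $\alg T^-\le\alg A$, so $\alg T^-\in\{\alg 2,\alg A\}$; if $\alg T^-=\alg A$ then $\alg T$ is admissible and $K_0(\alg A)\le\alg T\le\alg B=K_0(\alg A)$, so $\alg T=\alg B$; if $\alg T^-=\alg 2$ then $\alg T\hookrightarrow K(\alg 2)=\alg K_4$ and, as $\alg K_4\not\le\alg B$, $\alg T\in\{\text{trivial},\alg K_3\}$. Finally $\Con B\cong\Con A$ is a two- or three-element chain (Lemma \ref{stiff}), and in the three-element case the unique proper quotient of $\alg B$ has negative cone $\alg A/\mu\cong\alg 2$ (with $\mu$ the monolith of $\alg A$) and no ${\bf o}$ --- a pair in $K_0(\alg A)$ cannot have both coordinates collapsed to $0$ by the monolith since $\alg A/\mu\cong\alg 2$ has no zero divisors --- hence is $\cong\alg K_3$. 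Combining, the only subdirectly irreducible members of $\VV(\alg B)$ are $\alg K_3$ and $\alg B$, so $\Lambda(\VV(\alg B))$ is the three-element chain and $\VV(\alg B)$ is almost minimal, distinct from $K(\mathsf{BA})$.

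The real obstacle I anticipate is the properness of $K_0(\alg A)$ in $K(\alg A)$ in the simple case. For $\alg A$ involutive or Heyting it is immediate from Theorems \ref{kalmanwajsberg} and \ref{kalmanheyting}, the minimal admissible subalgebra corresponding to the least (regular) filter, which never contains $0$ for a nontrivial algebra; for an arbitrary rigid $\alg A$ one must exhibit, from meet-irreducibility of $0$, a concrete admissible subalgebra of $K(\alg A)$ omitting ${\bf o}$, and checking closure under multiplication is the delicate part. On the first-statement side the parallel delicate point is ruling out a proper quotient of $\alg A^-$ isomorphic to $\alg 2\times\alg 2$ (which would be compatible with every other rigidity clause); this is exactly what the cardinality bound coming from subdirect irreducibility of $\alg A$ should deliver. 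All remaining verifications are routine bookkeeping with $K$, $(\cdot)^-$ and J\'onsson's Lemma, running largely parallel to the unbounded treatment of tight algebras in \cite{AglianoMarcos2020a}.
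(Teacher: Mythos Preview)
Your overall strategy matches the paper's: for the first direction you pass to $\alg A^-$ via the congruence isomorphism and the embedding $\alg A\hookrightarrow K(\alg A^-)$, and for the converse you take $K_0(\alg A)$ and control its subalgebras and quotients with J\'onsson's Lemma. The paper does the same, only more tersely --- it never bothers to prove $\alg A=K_0(\alg A^-)$, and for the subalgebra clause it argues directly that any proper subalgebra of $\alg A$ lies in $\VV(\alg K_3)$, so its negative cone is a Boolean subalgebra of the finitely subdirectly irreducible $\alg A^-$ and hence has two elements.

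Your two self-identified ``obstacles'' deserve comment. The second one (properness of $K_0(\alg A)$ in $K(\alg A)$ when $\alg A$ is simple) is exactly what the hypothesis ``$0$ meet irreducible'' is for, and the paper dispatches it uniformly --- no case split --- by quoting Lemma~4.8 of \cite{AglianoMarcos2020a}: if $0$ is meet irreducible in $\alg A$ then $K(A)\setminus\{(0,0)\}$ is already the universe of an admissible subalgebra, so $K_0(\alg A)\subseteq K(A)\setminus\{(0,0)\}$ and $(0,0)\notin K_0(\alg A)$. This is precisely the ``concrete admissible subalgebra omitting ${\bf o}$'' you were looking for; the closure under product is the content of that lemma. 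Once you have this, your separate invocation of Lemma~\ref{stiffk3} for the non-simple case becomes redundant for proving ${\bf o}\notin K_0(\alg A)$, though you still need it (as the paper does) to see that the unique proper quotient of $K_0(\alg A)$ is $\alg K_3$: one has $K_0(\alg A)\le\alg C$ with $\alg C/\theta\cong\alg K_3$, hence $K_0(\alg A)/\theta\le\alg K_3$ with negative cone $\alg 2$, so it equals $\alg K_3$. Your alternative justification via ``$\alg A/\mu\cong\alg 2$ has no zero divisors'' is not the right mechanism here.

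Your first obstacle --- ruling out $\alg A^-/\theta\cong\alg 2\times\alg 2$ in the forward direction --- is real, and your proposed fix does not work: subdirect irreducibility of $\alg A$ gives you a monolith but imposes no cardinality bound on proper quotients, and $K_0(\alg 2^n)\cong\alg K_3^{\,n}\in\VV(\alg K_3)$, so ``quotient lands in $\VV(\alg K_3)$'' alone does not force the negative cone of the quotient to be $\alg 2$. The paper is equally terse at this point, simply asserting $\alg A/\alpha\cong\alg K_3$; if you want to make your write-up self-contained you will need an additional argument (e.g.\ showing that the congruence lattice of $\alg A$ is a chain of length at most two) rather than the cardinality heuristic you sketched.
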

\begin{proof} Suppose that $\alg A$ is a finite subdirectly irreducible algebra generating an almost minimal variety different from $K(\mathsf{BA})$. Clearly $|A^-|> 2$, and any proper subalgebra of $\alg A$ must be isomorphic with $\alg K_3$. This implies that $\alg A^-$ cannot have proper subalgebras different from $\{0,1\}$. Consider now a proper nontrivial $\th \in \op{Con}(\alg A^-)$ and let $\a = \op{Cg}_\alg A(\th)$. Then $\a$ is proper nontrivial in $\Con A$ and hence
 we must have $\alg A/\th \cong \alg K_3$.  This implies that $(\alg A/\a)^- = \alg A^-/\th =\mathbf 2$ so $\alg A^-$ is rigid.

Conversely, let $\alg A$ be a finite rigid algebra such that $0$ is meet irreducible. Since $\alg A$ is finite and subdirectly irreducible, so is $K(\alg A)$.
As $\alg A$ is rigid, it does not have proper subalgebras different from $\{0,1\}$, so the subalgebras of $K(\alg A)$ are only $\alg K_3$, $\alg K_4$ and the admissible subalgebras.
As $0$ is meet irreducible in $\alg A$, $K(A)\setminus\{(0,0)\}$ is the universe of an admissible subalgebra $\alg B$ of $K(\alg A)$ (see Lemma 4.8 in \cite{AglianoMarcos2020a}) that does not contain $(0,0)$, so $\alg K_4$ is not a subalgebra of $\alg B$ (see Lemma \ref{abovek3}).

By Lemma \ref{stiffk3} there is an admissible subalgebra $\alg C \le K(\alg A)$ with the property that its only nontrivial quotient is $\alg K_3$ (if it has one).

Therefore the minimal admissible subalgebra $K_0(\alg A)$ satisfies that its only subalgebras are $\alg K_3$ and $K_0(\alg A)$, and its only non-trivial quotient (if any) is $\alg K_3$. By J\'onsson's Lemma we conclude that $K_0(\alg A)$ generates a cover of the atom in $\Lambda(\mathsf{BKL})$.

\end{proof}

\subsection{Other almost minimal varieties}

We will now consider some examples of non-finitely generated covers of $\VV(\alg K_3)$.

\begin{lemma}\label{firstorder} Let $\alg A\in \mathsf{BKL}$ satisfy that $\alg K_4\not\leq \alg A$. Then each $\alg B\in \SU\PP_u(\alg A)$ satisfies $\alg K_4\not\leq \alg B$.\end{lemma}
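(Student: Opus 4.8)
The plan is to recognize that the hypothesis $\alg K_4\not\leq\alg A$ is, thanks to Lemma \ref{abovek3}, equivalent to the validity in $\alg A$ of a single universal first-order sentence, and then to invoke the standard preservation of universal sentences along the class operators $\SU$ and $\PP_u$. First I would record the translation: by Lemma \ref{abovek3}, for any $\alg C\in\mathsf{BKL}$ we have $\alg K_4\leq\alg C$ if and only if there is an element $z\in C$ with $\nneg z=z$ and $z\meet 1=0$. Negating, $\alg K_4\not\leq\alg C$ is equivalent to $\alg C$ satisfying the universal sentence
$$
\sigma \;:=\; \forall z\,\bigl(\nneg z\neq z \ \vee\ z\meet 1\neq 0\bigr).
$$
In particular the hypothesis says $\alg A\models\sigma$.

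Next I would run the two preservation steps. By {\L}o\'s's theorem, an ultrapower of $\alg A$ satisfies exactly the first-order sentences that $\alg A$ satisfies, so every $\alg D\in\PP_u(\alg A)$ satisfies $\sigma$. Since $\sigma$ is universal, it is inherited by substructures; hence every subalgebra $\alg B$ of such a $\alg D$ --- that is, every $\alg B\in\SU\PP_u(\alg A)$ --- also satisfies $\sigma$. Applying Lemma \ref{abovek3} once more, now in the direction $\neg\sigma\Leftrightarrow\alg K_4\leq\alg B$, yields $\alg K_4\not\leq\alg B$, which is the claim.

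I do not expect a genuine obstacle here: the whole content sits in Lemma \ref{abovek3}, which converts the embeddability of $\alg K_4$ into the existence of a definable element $\bf o$, thereby turning ``$\alg K_4$ is not a subalgebra'' into a $\Pi_1$ property; once that translation is in place, the elementary model-theoretic facts that universal sentences persist under ultraproducts and under substructures close the argument. The only point deserving care is to phrase $\sigma$ correctly --- as a universal formula whose quantifier-free matrix is a disjunction of negated equations --- so that both preservation steps genuinely apply. (If one prefers to avoid quoting {\L}o\'s's theorem, the same conclusion follows directly: an element $\bf o$ of an ultrapower witnessing $\neg\sigma$ would, by a standard argument, force a witness already in $\alg A$, contradicting $\alg A\models\sigma$; but invoking preservation of universal sentences is the cleanest route.)
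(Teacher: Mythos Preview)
Your proposal is correct and follows essentially the same approach as the paper: translate $\alg K_4\not\leq\alg A$ via Lemma \ref{abovek3} into the validity of a universal first-order sentence, then use that such sentences are preserved under ultrapowers and subalgebras. The paper's proof is terser but identical in substance, writing the sentence as $\forall x\,(\neg\varphi(x)\vee\neg\psi(x))$ with $\varphi(x):(x\approx x\to 1)$ and $\psi(x):(x\meet 1\approx 0)$.
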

\begin{proof}
By Lemma \ref{abovek3}, $\alg K_4\not\leq \alg A$ is equivalent to the non-existence of an element $\bf o$ such that ${\bf o} = \sim {\bf o}$ and ${\bf o}\wedge 1 =0$. But this is  first-order definable by the sentence given by $\varphi(x): (x\app x\to 1)$ and $\psi(x): (x \meet 1 \app 0)$
\begin{align*} 	\forall\,x (\neg\varphi(x) \join \neg\psi(x)). \end{align*}
Therefore if it is true for $\alg A$, then it is true for any ultrapower, and clearly for any subalgebra $\alg B\in \SU\PP_u(\alg A)$.
\end{proof}

We will use this Lemma to construct two covers.

\medskip

First we will show that $K_0(\mathbf 2 \oplus \alg C_\o)$ generates an almost minimal variety different from $K(\mathsf{BA})$. Observe that by Theorem \ref{admissibleordinalsum}, we have that the universe of $K_0(\mathbf 2 \oplus \alg C_\o)$ is $K(\mathbf 2 \oplus \alg C_\o)\setminus\{(0,0)\}$. Therefore $\alg K_4\not\leq K_0(\mathbf 2 \oplus \alg C_\o)$ and by Lemma \ref{firstorder} we have that $\alg K_4\not\in \SU\PP_u(K_0(\mathbf 2 \oplus \alg C_\o))$.

Now, in $2 \oplus \alg C_\o$ we have the first-order formula
\begin{align*} 	\forall\,x ((x\app 0) \join (\neg x \app 0)), \end{align*}
Therefore, if $\alg A\in \SU\PP_u(K_0(2 \oplus \alg C_\o))$ and $\theta$ is a congruence in $\alg A$ with $\alg A/\theta \leq \alg K_4$, $F=\theta\cap A^-$ is a filter in $\alg A^-$ such that $\alg A^-/F\cong \alg 2$. Then if $a\in A^-$ satisfies $a\sim_F 0$, $\neg a\in F$ and therefore $a = 0$ by the first-order formula preserved in $\PP_u(2 \oplus \alg C_\o)$. Thus as $(0,0)$ is not an element of $\alg A$, we have that $\alg A/\theta \cong \alg K_3$.
This shows that $\alg K_4\not\in \HH\SU\PP_u(K_0(2 \oplus \alg C_\o))$, and therefore $\alg K_4\not\in \VV(K_0(2 \oplus \alg C_\o))$.

To show that the variety is minimal, if $\alg B \in \HH\SU\PP_u(K_0(2 \oplus \alg C_\o))$, by using Lemma \ref{subdir} and the well-known properties of cancellative hoops we can deduce that either $\alg B^- \cong \mathbf 2$ or $\alg B^- \cong \mathbf 2 \oplus \alg A$ for some totally ordered cancellative hoop $\alg A$. In case $\alg B^-\cong \mathbf 2$, then
$\alg B \cong \alg K_3$ (as we showed that $\alg K_4\not\in \HH\SU\PP_u(K_0(2 \oplus \alg C_\o))$). Otherwise  $\alg B \le K(\mathbf 2 \oplus \alg A)$ and $(0,0) \notin B$; but then $\alg B \cong K_0(\mathbf 2 \oplus \alg C)$ for some subalgebra $\alg C \le \alg B$ and $\mathbf 2 \oplus \alg C_\o \le \mathbf 2 \oplus \alg C$. It follows that $K_0(\mathbf 2 \oplus \alg C_\o) \le K(\mathbf 2 \oplus \alg C) \cong \alg B$. This is enough to prove that $K_0(\mathbf 2 \oplus \alg C_\o)$ generates an almost minimal variety not above $K(\mathsf{BA})$.

\medskip

We will now use a similar argument to show that $K_0(\alg \L_1^\o)$ generates an almost minimal variety above $\VV(\alg K_3)$. Observe that the Chang algebra satisfies $\alg \L_1^\o \cong (\alg C_\o)^{\d_1}$, that is the disconnected rotation of the cancellative hoop $\alg C_\o$, and we can identify $\alg C_\o$ with the radical (and only filter) of $\alg \L_1^\o$. It can be shown that the universe of $K_0(\alg \L_1^\o)$ is $K(\alg \L_1^\o)\setminus\{(a,b):\neg a,\neg b\in C_\o\}$.
It is clear that $\alg K_4\not\leq K_0(\alg \L_1^\o)$ and by Lemma \ref{firstorder} we have that $\alg K_4\not\in \SU\PP_u(K_0(\alg \L_1^\o))$.

Now, if $\alg A\in \SU\PP_u(K_0(\alg \L_1^\o))$ and $\theta$ is a congruence in $\alg A$ with $\alg A/\theta \leq \alg K_4$, $F=\theta\cap A^-$ is a filter in $\alg A^-$ such that $\alg A^-/F\cong \alg 2$. As $\alg \L_1^\o$ satisfies the first-order formula
\begin{align*} 	\forall\,x ((x^2\app 0) \join (\neg x \imp x\app 1)), \end{align*}
then if $a\in A^-$ satisfies $a\sim_F 0$, it must be $a^2=0$. But $K_0(\alg \L_1^\o)$ satisfies the first-order formula
\begin{align*} 	\forall\,x (((x\wedge 1)^2\app 0) \Rightarrow \neg(((x\imp 1)\wedge 1)^2\app 0)), \end{align*}
there does not exist $x\in \alg A$ such that $x/\theta = (0,0)$, so $\alg A/\theta \cong \alg K_3$.
This shows that $\alg K_4\not\in \HH\SU\PP_u(K_0(\alg \L_1^\o))$, and therefore $\alg K_4\not\in \VV(K_0(\alg \L_1^\o))$.

To show that the variety is minimal, suppose that $\alg B \in \HH\SU\PP_u(K_0(\alg \L_1^\o))$; certainly $K_0(\alg L_1^\o)$  satisfies a first order sentence that says that its negative part (i.e. $\alg L_1^\o$) is totally ordered. It follows that $\alg B$ satisfies the same sentence, i.e. $\alg B^-$ is totally ordered as well.  Since $\alg B^-$ belongs to the variety generated by the Chang algebra,  either $\alg B^- \cong \mathbf 2$ or $\alg B^- \cong (\alg A)^{\d_1}$ for some totally ordered cancellative hoop $\alg A$. In case $\alg B^-\cong \mathbf 2$, then
$\alg B \cong \alg K_3$ (as we showed that $\alg K_4\not\in \HH\SU\PP_u(K_0(\alg \L_1^\o))$). Otherwise $\alg B \le K((\alg A)^{\d_1})$ and $(0,0) \notin B$; but then $K_0(\alg \L_1^\o) \le \alg B$. This is enough to prove that $K_0(\alg \L_1^\o)$ generates an almost minimal variety not above $K(\mathsf{BA})$.

\section{Lattices of subvarieties}\label{S3.examples}

In this section we will consider the lattices of subvarieties of $K(\vv V)$ for some specific $\vv V \sse \mathsf{BCRL}$. Some results are very similar to the ones obtained for the unbounded cases in \cite{AglianoMarcos2020a}, some are substantially different and some are totally new in the sense that deal with varieties that have no corresponding known unbounded counterpart. We will omit all the proofs that are simple rewritings of the proofs in \cite{AglianoMarcos2020a} and we will illustrate in more details the {\em new} cases.

\subsection{Bounded Basic K-Lattices}

In this Section we will deal with different suvbarieties of $K(\mathsf{BL})$ but first we will mention some basic results.

The finitely generated minimal varieties are $K(\mathsf{BA})$, $\VV(K_0(\alg G_3))$ and all $\VV(K_0(\alg \L_p))$ for $p$ prime. The non-finitely generated ones are $\VV(K_0(\mathbf 2 \oplus \alg C_\o))$ and $\VV(K_0(\alg \L_1^\o))$.

Observe also that if $\alg A\in\mathsf{BL}$ is subdirectly, then $\alg A\cong \alg B\oplus \alg D$, where $\alg B$ is a Wajsberg algebra (actually a chain) and $\alg D$ is a basic hoop. Then all the admissible subalgebras of $K(\alg A)$ are in one to one correspondence with the admissible subalgebras of $K(\alg B)$, which in turn are in one to one correspondence with the lattice filters of $\alg B$ by Theorems \ref{admissibleordinalsum} and \ref{kalmanwajsberg}.

\subsubsection{Bounded Wajsberg K-lattices}\label{mv}

Let $\mathsf{WA}$ be the variety of Wajsberg algebras; a bounded K-lattice $\alg A$ is a {\bf bounded Wajsberg K-lattice} if $\alg A^- \in \mathsf{WA}$. The lattice $\Lambda(\mathsf{WA})$ is well known \cite{Komori1981} and it is
simpler than the lattice of subvarieties of Wajsberg hoops \cite{AglianoPanti1999}; this simplification is reflected also in the structure of $\Lambda(K(\mathsf{WH}))$.
Since Wajsberg algebras are involutive, if $\alg \L_n$ is the $n+1$-element Wajsberg chain, then its admissible subalgebras can be described exactly as in \cite{AglianoMarcos2020a}, Section 5: if $a$ is the only coatom of $\alg \L_n$ then
$$
K_{m,n} = \{(u,v): u \oplus v \ge a^m\}
$$
is the universe of an admissible subalgebra $\alg K_{m,n}=K(\alg \L_n,F)$ of $K(\alg \L_n)$, for $F=\langle a^m\rangle$ the filter generated by $a^m$. Now similarly to \cite{AglianoMarcos2020a} we can show:

\begin{theorem} The only almost minimal varieties in $\Lambda(K(\mathsf{WA}))$ are
 $K(\mathsf{BA})$, $\VV(\alg K_{0,p})$ for $p$ prime,
and $\VV(K_0(\alg \L_1^\o))$.
\end{theorem}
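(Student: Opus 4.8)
The plan is to combine the classification machinery developed so far: J\'onsson's Lemma (Lemma \ref{jonsson}) to reduce to finite subdirectly irreducible algebras, Theorem \ref{cover} to force the negative cone of any finitely generated almost minimal cover to be a rigid algebra in $\mathsf{BCRL}$, and the classification of rigid/tight algebras in $\mathsf{WA}$ to pin down exactly which ones occur. Concretely, I would first observe that $\Lambda(K(\mathsf{WA}))$ splits into the Kalman part (above $K(\mathsf{BA}) = \VV(\alg K_4)$) and the non-Kalman part (contained in $\VV(\alg K_3)$-like behaviour, i.e.\ where $\alg K_4 \not\le \alg A$); by the second-to-last displayed theorem in the excerpt, $\VV(\alg K_4)$ is the unique almost minimal Kalman variety, so it appears in the list and we only need to find the non-Kalman almost minimal varieties.

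For the non-Kalman case I would argue as follows. Let $\vv W$ be an almost minimal subvariety of $K(\mathsf{WA})$ with $\vv W \ne \VV(\alg K_4)$. Since $K(\mathsf{WA})$ is congruence distributive (lattice reduct) and $\mathsf{WA}$ is generated by its finite chains $\alg \L_n$ together with the quasi-variety side giving $\alg \L_1^\o$ via disconnected rotation, I would show $\vv W$ is generated either by a finite subdirectly irreducible algebra $\alg A$ with $\alg A^-$ a finite Wajsberg chain, or by $K_0(\alg \L_1^\o)$ (the only relevant non-finitely-generated piece, whose minimality is already established in the excerpt). In the finite case, Theorem \ref{cover} tells us $\alg A^-$ is rigid; but a rigid algebra in $\mathsf{WA}$ whose $0$-free reduct is tight must be a simple Wajsberg chain, hence $\alg A^- \cong \alg \L_p$ for $p$ prime (the finite simple $\mathsf{MV}$-chains), and one checks no non-tight rigid Wajsberg chain exists because Wajsberg chains $\alg \L_n$ with $n$ composite have proper nontrivial congruences with non-Boolean quotients, while $\alg \L_p$ is simple. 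Then, using the classification of admissible subalgebras $\alg K_{m,n} = K(\alg \L_n, \la a^m\ra)$ recalled just above the statement, I would show that among admissible subalgebras of $K(\alg \L_p)$ the one generating the cover of the atom is the minimal admissible subalgebra $K_0(\alg \L_p) = \alg K_{0,p}$ (apply the ``$0$ meet irreducible'' half of Theorem \ref{cover}, valid since $0$ is meet irreducible in a chain), and that larger $\alg K_{m,p}$ with $m \ge 1$ sit strictly above $\VV(\alg K_4)$ and hence are not almost minimal in the non-Kalman sense, while any other subalgebra of $K(\alg \L_p)$ is either $\alg K_3$, $\alg K_4$, or contains $\alg K_{0,p}$.

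Finally I would assemble the list: $K(\mathsf{BA})$ from the Kalman side; $\VV(\alg K_{0,p})$ for each prime $p$ from the finite non-Kalman side; and $\VV(K_0(\alg \L_1^\o))$ from the infinite non-Kalman side, with its minimality already proved in Section \ref{S2.atomsandcovers} of the excerpt. I would also note that $K_0(\mathbf 2 \oplus \alg C_\o)$ does \emph{not} appear here: its negative cone $\mathbf 2 \oplus \alg C_\o$ is not a Wajsberg algebra (the $\alg C_\o$ summand is cancellative, not bounded and not involutive as needed), so it lies outside $K(\mathsf{WA})$, which is why the list for $\mathsf{WA}$ is strictly shorter than the one for $\mathsf{BL}$ mentioned just before this subsection. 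The main obstacle I anticipate is the bookkeeping in the finite non-Kalman case: one must rule out that some admissible subalgebra $\alg K_{m,n}$ of $K(\alg \L_n)$ with $n$ composite or $m\ge 1$ could still generate a cover of $\VV(\alg K_3)$, which requires carefully locating $\alg K_3$, $\alg K_4$ and $K_0(\alg \L_n)$ inside its subalgebra lattice and inside its congruence lattice — but this is exactly parallel to the computation carried out in \cite{AglianoMarcos2020a}, Section 5, so the argument should go through verbatim modulo the presence of the constant $0$, which only removes subalgebras and thus can only help.
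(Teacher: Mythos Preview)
Your approach is essentially the one the paper intends: the paper gives no proof here beyond ``similarly to \cite{AglianoMarcos2020a}'', and your sketch unpacks exactly that, using Theorem~\ref{cover} for the finite case, the known classification of rigid Wajsberg algebras (the simple chains $\alg \L_p$, $p$ prime), and the argument from Section~\ref{S2.atomsandcovers} for $K_0(\alg \L_1^\o)$.

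There is one concrete slip you should fix. You write that the $\alg K_{m,p}$ with $m\ge 1$ ``sit strictly above $\VV(\alg K_4)$''. This is false for $1\le m<p$: since $\alg \L_p$ is simple, so is every $\alg K_{m,p}$ (congruence lattices of $\alg A$ and $K(\alg A)$ coincide), and $(0,0)\in K_{m,p}$ only when the filter is all of $\alg \L_p$, i.e.\ when $m=p$. Hence for $1\le m<p$ we have $\alg K_4\notin \HH\SU(\alg K_{m,p})$ and $\VV(\alg K_{m,p})$ does \emph{not} contain $K(\mathsf{BA})$. The correct reason these are not almost minimal is the one you yourself state a line later: $\alg K_{0,p}\le \alg K_{m,p}$ for every $m$, so $\VV(\alg K_{m,p})\supsetneq \VV(\alg K_{0,p})\supsetneq \VV(\alg K_3)$. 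Once you replace the ``above $\VV(\alg K_4)$'' clause by this containment argument, the finite case is clean. The exhaustiveness of the non-finitely generated case (why $K_0(\alg \L_1^\o)$ is the only one) you leave as a promise; as you note, this is exactly the bookkeeping carried out in \cite{AglianoMarcos2020a}, using that every proper subvariety of $\mathsf{WA}$ is generated by finitely many $\alg \L_n$ and $\alg \L_n^\o$, and the paper is content to cite that as well.
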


We define
$$
\alg \L^\o_n = \Gamma(\mathbb Z \times^l\mathbb Z, (n,0)),
$$
where $\times^l$ is the lexicographic product and $\Gamma$ is the Mundici functor \cite{Mun1986}.

Each proper subvariety of $\mathsf{WA}$ has only finitely many subvarieties (we say sometimes that has {\em finite height}): if $\vv V$ is proper than there is a finite subset $X$ of $\{\alg \L_n: n \in \mathbb N\}$ and a finite subset $Y\sse\{\alg \L^\o_m: m \in \mathbb N\}$ with $\vv V = \VV(X \cup Y)$ \cite{AglianoPanti1999}.

Recalling Theorem \ref{kalmanwajsberg}, we have that
\begin{itemize}
	\item $K(\alg \L_n,F)\leq K(\alg \L_m,G)$ if and only if $n|m$ and $\langle F\rangle_m\subseteq G$, where $\langle F\rangle_m$ is the lattice filter generated by $F\subseteq \alg \L_n$, viewed as a subset of $\alg \L_m$. This can be simplified as each lattice filter in $\alg\L_n$ is principal, and therefore if $F=\langle x^r\rangle_n$ and $G=\langle x^s\rangle_m$, then $\alg K_{r,n}=K(\alg\L_n,F)\leq K(\alg\L_m,G)=\alg K_{s,n}$ if and only if there is a $k \in \mathbb N$ with $nk=m$ and $s \ge rk$.
	\item $K(\alg\L_n,F)\leq K(\alg\L^\o_m,G)$ if and only if $n|m$ and $\langle F\rangle_m\subseteq G$.
	\item $K(\alg\L^\o_n,F)\leq K(\alg\L^\o_m,G)$ if and only if $n|m$ and $\langle F\rangle_m\subseteq G$.
\end{itemize}

We stress that we can obtain all the results in Section 6.3 for finitely generated varieties of Wajsberg K-lattices with an even simpler presentation. This is a straightforward, albeit lengthy, exercise and we leave it to the interested reader. As a small example, in Picture \ref{kl4} we include the lattice of subvarieties of $\VV(K(\alg \L_4))$.

\begin{figure}[htbp]
\begin{center}
\begin{tikzpicture}[scale=0.7]
\draw (11.,-4.)-- (11.,-3.);
\draw (11.,-3.)-- (11.,-2.);
\draw (11.,-3.)-- (13.,-2.);
\draw (13.,-2.)-- (15.,-1.);
\draw (13.,-2.)-- (12.,-1.);
\draw (15.,-1.)-- (15.,0.);
\draw (15.,0.)-- (15.,1.);
\draw (15.,0.)-- (13.,-1.);
\draw (13.,-1.)-- (11.,-2.);
\draw (12.,-1.)-- (11.,0.);
\draw (11.,0.)-- (11.,1.);
\draw (11.,1.)-- (12.,0.);
\draw (12.,0.)-- (12.,-1.);
\draw (12.,0.)-- (13.,-1.);
\draw (13.,-1.)-- (13.,-2.);
\draw (15.,-1.)-- (14.,0.);
\draw (15.,0.)-- (14.,1.);
\draw (14.,1.)-- (14.,0.);
\draw (12.,-1.)-- (14.,0.);
\draw (12.,0.)-- (14.,1.);
\draw (14.,1.)-- (13.,2.);
\draw (13.,2.)-- (13.,1.);
\draw (13.,2.)-- (11.,1.);
\draw (11.,0.)-- (13.,1.);
\draw (13.,1.)-- (14.,0.);
\draw (14.,1.)-- (14.,2.);
\draw (13.,2.)-- (13.,3.);
\draw (13.,3.)-- (14.,2.);
\draw (14.,2.)-- (15.,1.);
\draw (13.,1.)-- (12.,2.);
\draw (13.,2.)-- (12.,3.);
\draw (13.,3.)-- (12.,4.);
\draw (12.,2.)-- (12.,3.);
\draw (12.,3.)-- (12.,4.);
\draw (12.,2.)-- (11.,3.);
\draw (12.,3.)-- (11.,4.);
\draw (11.,3.)-- (11.,4.);
\draw (12.,4.)-- (11.,5.);
\draw (11.,4.)-- (11.,5.);
\draw (11.,5.)-- (11.,6.);
\draw [fill] (11.,-3.) circle (2.5pt);\node[left] at (11.,-3.) {\tiny $\VV(\alg K_3)$};
\draw [fill] (11.,-4.) circle (2.5pt);\node[right] at (11.,-4.) {\tiny $\mathsf{T}$};
\draw [fill] (11.,-2.) circle (2.5pt);\node[left] at (11.,-2.) {\tiny $\VV(\alg K_4)=K(\mathsf(BA))$};
\draw [fill] (13.,-2.) circle (2.5pt);\node[right] at (13.,-2.) {\tiny $\VV(\alg K_{0,2})$};
\draw [fill] (15.,-1.) circle (2.5pt);\node[right] at (15.,-1.) {\tiny $\VV(\alg K_{1,2})$};
\draw [fill] (13.,-1.) circle (2.5pt);
\draw [fill] (15.,0.) circle (2.5pt);
\draw [fill] (12.,-1.) circle (2.5pt);\node[left] at (12.,-1.) {\tiny $\VV(\alg K_{0,4})$};
\draw [fill] (11.,0.) circle (2.5pt);\node[left] at (11.,0.) {\tiny $\VV(\alg K_{1,4})$};
\draw [fill] (11.,1.) circle (2.5pt);
\draw [fill] (12.,0.) circle (2.5pt);
\draw [fill] (15.,1.) circle (2.5pt);\node[right] at (15.,1.) {\tiny $\VV(K(\alg \L_2))$};
\draw [fill] (14.,1.) circle (2.5pt);
\draw [fill] (14.,0.) circle (2.5pt);
\draw [fill] (13.,2.) circle (2.5pt);
\draw [fill] (13.,1.) circle (2.5pt);
\draw [fill] (14.,2.) circle (2.5pt);
\draw [fill] (13.,3.) circle (2.5pt);
\draw [fill] (12.,2.) circle (2.5pt);\node[left] at (12.,2.) {\tiny $\VV(\alg K_{2,4})$};
\draw [fill] (12.,3.) circle (2.5pt);
\draw [fill] (12.,4.) circle (2.5pt);
\draw [fill] (11.,3.) circle (2.5pt);\node[left] at (11.,3.) {\tiny $\VV(\alg K_{3,4})$};
\draw [fill] (11.,4.) circle (2.5pt);
\draw [fill] (11.,5.) circle (2.5pt);
\draw [fill] (11.,6.) circle (2.5pt);\node[above] at (11.,6.) {\tiny $\VV(K(\alg \L_4))$};
\end{tikzpicture}
\end{center}
\caption{$\Lambda(\VV(K(\alg \L_4)))$\label{kl4}}
\end{figure}

For the case of non-finitely generated varieties, observe that in $K(\alg\L^\o_n,F)$, if $F$ is a lattice filter properly containing the radical, and if $\theta$ is the congruence in $K(\alg\L^\o_n,F)$ generated by the radical of $\alg\L^\o_n$, then $K(\alg\L^\o_n,F)/\theta \cong \alg K_4$. Therefore in this case the structure of the lattice of subvarieties becomes much more complex.

\subsubsection{Bounded G\"odel and Product K-lattices}

Let's consider the variety $\mathsf{GA}$ of G\"odel algebras; it is obvious that its lattice of subvarieties is identical to the one of G\"odel hoops. If we look at $K(\mathsf{GA})$ things are different though.
Observe first that the following Theorem from \cite{AglianoMarcos2020a} still holds.

\begin{theorem} The nontrivial subalgebras (up to isomorphism) of $\alg K_{n^2}$ are the algebras $\alg K_{m^2}$ and $\alg K_{m^2-1}$ for $m=2,\dots,n$.
\end{theorem}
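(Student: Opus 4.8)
The plan is to reduce the statement to Theorem \ref{kalmanheyting}, which classifies the algebras in $\mathsf{BKL}$ with a prescribed G\"odel-chain negative cone, proceeding exactly as in \cite{AglianoMarcos2020a}. The preliminary observation is that, writing $0=a_0<a_1<\dots<a_{n-1}=1$ for the $n$-element G\"odel chain $\alg G_n$, one has $\alg K_{n^2}=K(\alg G_n)$ and $\alg K_{n^2-1}=K_0(\alg G_n)$, the minimal admissible subalgebra of $K(\alg G_n)$. So the first real step is to describe, for each $m$, the members of $\mathsf{BKL}$ whose negative cone is isomorphic to $\alg G_m$: since $\alg G_m$ is a Heyting algebra in which every nonzero element is dense, its only regular filters are $\upa a_0=G_m$ and $\upa a_1$, and hence by Theorem \ref{kalmanheyting} the only such algebras are $K(\alg G_m,\upa a_0)=K(\alg G_m)=\alg K_{m^2}$ and $K(\alg G_m,\upa a_1)=K(\alg G_m)\setminus\{(0,0)\}=K_0(\alg G_m)=\alg K_{m^2-1}$.

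Next I would prove that every nontrivial subalgebra of $\alg K_{n^2}$ is of one of these forms. Let $\alg B\le\alg K_{n^2}=K(\alg G_n)$ be nontrivial. By Lemma \ref{subdir}(1) and Lemma \ref{inclusions}, $\alg B^-$ embeds into $K(\alg G_n)^-\cong\alg G_n$; a subalgebra of a G\"odel chain containing $0$ and $1$ is again a G\"odel chain (in a G\"odel chain $\cdot=\meet$ and $\imp$ maps such a subchain into itself), so $\alg B^-\cong\alg G_m$ for some $m\le n$; and $m\ge 2$, since $\alg B$ embeds into $K(\alg B^-)$ by Lemma \ref{inclusions} while $K$ of the trivial algebra is trivial. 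As $\alg G_m$ is a Heyting algebra, Theorem \ref{kalmanheyting} applies to $\alg B$ and, together with the first step, yields $\alg B\cong\alg K_{m^2}$ or $\alg B\cong\alg K_{m^2-1}$.

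For the converse, for each $2\le m\le n$ the $m-1$ smallest elements of $\alg G_n$ together with $1$ form a subalgebra isomorphic to $\alg G_m$, so $\alg G_m\le\alg G_n$; applying $K$, which preserves embeddings since its operations are given by the same formulas, gives $\alg K_{m^2}=K(\alg G_m)\le K(\alg G_n)=\alg K_{n^2}$, and hence also $\alg K_{m^2-1}=K_0(\alg G_m)\le K(\alg G_m)\le\alg K_{n^2}$. To make the enumeration correct up to isomorphism I would finally note that the cardinalities $m^2$ and $m^2-1$ for $m=2,\dots,n$ are pairwise distinct, so the algebras listed are pairwise non-isomorphic.

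I do not expect a serious obstacle: this is essentially the bounded reformulation of the corresponding result in \cite{AglianoMarcos2020a}. The only points that need a little attention are the elementary computation of the regular filters of a finite G\"odel chain and the remark that a subalgebra of $K(\alg G_n)$ automatically has a G\"odel-chain negative cone, which is precisely what makes Theorem \ref{kalmanheyting} applicable to it.
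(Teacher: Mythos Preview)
Your proposal is correct and follows the intended approach: the paper does not reprove the statement but simply records that the result of \cite{AglianoMarcos2020a} carries over verbatim, and your argument via Theorem~\ref{kalmanheyting} (computing that the only regular filters of a finite G\"odel chain are $G_m$ and $G_m\setminus\{0\}$, hence each $\alg B\le K(\alg G_n)$ with $\alg B^-\cong\alg G_m$ is either $\alg K_{m^2}$ or $\alg K_{m^2-1}$) is exactly how that earlier proof goes. One cosmetic remark: the justification that $m\ge 2$ is even more immediate than you wrote, since $0$ and $1$ are distinct constants of $\alg B^-$ whenever $\alg B$ is nontrivial.
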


However, it is no longer true (as in the unbounded case) that $\alg K_{n^2-1}$ has a subalgebra isomorphic with $\alg K_{m^2}$ for $m<n$, neither it is true that $\alg K_{n^2-1}$ has $\alg K_{m^2}$ as a homomorphic image for $m<n$ (this is due to the fact that a homomorphism $f:\alg G_n \to \alg G_m$ satisfies $f(x)=0$ only for $x=0$).
In particular $\Lambda(K(\mathsf{GA}))$ is no longer a chain; with standard calculations one can show that it is the lattice in  Figure \ref{kga}.

\begin{figure}[htbp]
\begin{center}
\begin{tikzpicture}[scale=.8]
\draw (0,0) -- (0,1) -- (-1,1.5) -- (-1,3.5) -- (0,4) -- (0,3) -- (0,2) -- (1,1.5) -- (0,1);
\draw (-1,1.5) -- (0,2);
\draw (-1,3.5) -- (0,4);
\draw[dashed] (-1,3.5) -- (-1,5.5);
\draw[dashed] (0,4) -- (0,6);
\draw (-1,5.5) -- (0,6) -- (0,7) -- (0,8) -- (-1,7.5) -- (-1,5.5);
\draw[dashed] (-1,7.5) -- (-1,8.5);
\draw[dashed] (0,8) -- (0,10);
\draw[fill] (0,0) circle [radius=0.075]; \node[right] at (0,0) {\tiny $\mathsf{T}$};
\draw[fill] (0,1) circle [radius=0.075]; \node[right] at (0,.9) {\tiny $\VV(\alg K_3)$};
\draw[fill] (-1,1.5) circle [radius=0.075]; \node[left] at (-1,1.5) {\tiny $\VV(\alg K_8)$};
\draw[fill] (-1,3.5) circle [radius=0.075]; \node[left] at (-1,3.5) {\tiny $\VV(\alg K_{15})$};
\draw[fill] (-1,5.5) circle [radius=0.075]; \node[left] at (-1,5.5) {\tiny $\VV(\alg K_{(n-1)^2-1})$};
\draw[fill] (-1,7.5) circle [radius=0.075]; \node[left] at (-1,7.5) {\tiny $\VV(\alg K_{n^2-1})$};
\draw[fill] (0,2) circle [radius=0.075];
\draw[fill] (0,4) circle [radius=0.075];
\draw[fill] (0,6) circle [radius=0.075];
\draw[fill] (0,8) circle [radius=0.075];
\draw[fill] (1,1.5) circle [radius=0.075]; \node[right] at (1,1.5) {\tiny $\VV(\alg K_4) = \VV(K(\mathbf 2))$};
\draw[fill] (0,3) circle [radius=0.075]; \node[right] at (0,3) {\tiny $\VV(\alg K_9) = \VV(\alg K(\mathbf 2 \oplus \mathbf 2))$};
\draw[fill] (0,7) circle [radius=0.075]; \node[right] at (0,7) {\tiny $\VV(\alg K_{(n-1)^2}) = \VV(K(\mathbf 2 \oplus\dots\oplus\mathbf 2))$};
\draw[fill] (0,10) circle [radius=0.075]; \node[right] at (0,10) {\tiny $K(\mathsf{GA})$};
\end{tikzpicture}
\end{center}
\caption{$\Lambda(K(\mathsf{GA}))$\label{kga}}
\end{figure}

\medskip

A BL-algebra is a \textbf{product algebra} if it satisfies the equation
\begin{align*}	\neg y \join ((x \imp xy) \imp y) \app 1.\end{align*}
The variety $\mathsf{PA}$ of product algebras has been studied in \cite{CignoliTorrens2000}; joining the results therein with the results about product K-lattices in \cite{AglianoMarcos2020a} we get:

\begin{lemma} The subdirectly irreducible algebras in $K(\mathsf{PA})$ are exactly $\alg K_3$, $\alg K_4$, $K_0(\mathbf 2 \oplus \alg C)$ and $K(\mathbf 2 \oplus \alg C)$ for any totally ordered cancellative hoop $\alg C$.
\end{lemma}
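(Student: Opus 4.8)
The plan is to exploit that $K(\mathsf{PA})$ is a Kalman variety: by Lemma \ref{techlemma}(1) it consists exactly of those $\alg A \in \mathsf{BKL}$ with $\alg A^- \in \mathsf{PA}$, and by the isomorphism $\Con A \cong \op{Con}(\alg A^-)$ an algebra $\alg A$ in it is subdirectly irreducible if and only if $\alg A^-$ is a subdirectly irreducible product algebra. So the first step is to recall, from \cite{CignoliTorrens2000}, that up to isomorphism a subdirectly irreducible product algebra is either $\mathbf 2$ or $\mathbf 2 \oplus \alg C$ with $\alg C$ a nontrivial totally ordered cancellative hoop.

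Now let $\alg A \in K(\mathsf{PA})$ be subdirectly irreducible, so $\alg A^- \cong \mathbf 2$ or $\alg A^- \cong \mathbf 2 \oplus \alg C$ as above. By Lemma \ref{inclusions} the map $a \mapsto (a \meet 1, \nneg a \meet 1)$ embeds $\alg A$ into $K(\alg A^-)$, and since its image has negative cone equal to all of $\alg A^-$ it is an \emph{admissible} subalgebra of $K(\alg A^-)$. Hence it suffices to enumerate the admissible subalgebras of $K(\mathbf 2)$ and of $K(\mathbf 2 \oplus \alg C)$ and to determine which are subdirectly irreducible. For $K(\mathbf 2)$: any admissible subalgebra contains $(0,1)$ and $(1,1)$, hence also $\nneg(0,1) = (1,0)$, so it contains the universe of $\alg K_3$; it is then either $\alg K_3$ or, if it also contains $(0,0)$, all of $K(\mathbf 2) = \alg K_4$. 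Both have negative cone $\mathbf 2$, which is simple, so both are subdirectly irreducible. For $K(\mathbf 2 \oplus \alg C)$ with $\alg C$ nontrivial: by Theorem \ref{admissibleordinalsum} its admissible subalgebras are in bijection with those of $K(\mathbf 2)$, so there are exactly two — the one over $\alg K_3$, which by definition is the minimal admissible subalgebra $K_0(\mathbf 2 \oplus \alg C)$, and the one over $\alg K_4$, which is $K(\mathbf 2 \oplus \alg C)$ itself. Both have negative cone $\mathbf 2 \oplus \alg C$, a subdirectly irreducible product algebra, so both are subdirectly irreducible by the congruence isomorphism. Conversely, each of $\alg K_3$, $\alg K_4$, $K_0(\mathbf 2 \oplus \alg C)$, $K(\mathbf 2 \oplus \alg C)$ has its negative cone in $\mathsf{PA}$, hence lies in $K(\mathsf{PA})$, and has just been shown to be subdirectly irreducible; this gives the claimed list (with the harmless remark that $\alg C$ trivial recovers $\mathbf 2 \oplus \alg C = \mathbf 2$, $K_0 = \alg K_3$, $K = \alg K_4$, so the first two entries are in fact special cases of the last two).

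The only delicate point is the enumeration of the admissible subalgebras of $K(\mathbf 2 \oplus \alg C)$: one must use that the correspondence in Theorem \ref{admissibleordinalsum} is a genuine bijection \emph{onto} all admissible subalgebras, so that no unexpected admissible subalgebra is missed. Apart from that, the argument is a straightforward chaining of Lemma \ref{techlemma}, Lemma \ref{inclusions}, the isomorphism $\Con A \cong \op{Con}(\alg A^-)$, and the Cignoli--Torrens structure theorem for subdirectly irreducible product algebras, with no nontrivial computation involved.
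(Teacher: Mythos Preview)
Your proof is correct and follows exactly the approach the paper indicates: it combines the Cignoli--Torrens classification of subdirectly irreducible product algebras with the K-lattice machinery (the congruence isomorphism, Lemma~\ref{inclusions}, and Theorem~\ref{admissibleordinalsum}) that the paper imports from \cite{AglianoMarcos2020a}. The paper itself gives no details beyond citing these two sources, so your write-up is in fact a faithful unpacking of what the authors intend.
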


Then it is easily checked that $\Lambda(K(\mathsf{PA}))$ is the lattice in Figure \ref{plattice}.

\begin{figure}[htbp]
\begin{center}
\begin{tikzpicture}
\draw (-1,2) -- (-1,3) -- (0,4) -- (-1,5) -- (-1,6) -- (-1,5) -- (-2,4) -- (-1,3) --(0,4) -- (-1,3);
\draw[fill] (-1,2) circle [radius=0.05];
\draw[fill] (0,4) circle [radius=0.05];
\draw[fill] (-1,5) circle [radius=0.05];
\draw[fill] (-1,6) circle [radius=0.05];
\draw[fill] (-2,4) circle [radius=0.05];
\draw[fill] (-1,3) circle [radius=0.05];
\node[right] at (-1,2) {\footnotesize $\mathsf{T}$};
\node[right] at (0,4) {\footnotesize $\VV(K_0(\alg 2\oplus \alg C_\o))$};
\node[right] at (-1,6) {\footnotesize $K(\mathsf{PA})=\VV(K(\alg 2\oplus \alg C_\o))$};
\node[left] at (-2,4) {\footnotesize $K(\mathsf{BA}) = \VV(\alg K_4)$};
\node[left] at (-1,3) {\footnotesize $\VV(\alg K_3)$};
\end{tikzpicture}
\end{center}
\caption{$\Lambda(K(\mathsf{PA}))$\label{plattice}}
\end{figure}

\subsection{Nilpotent minimum K-lattices}\label{nilpotent}

Let's consider the variety generated by connected and disconnected rotations of G\"odel chains, that is the variety $\mathsf{NM}$ of {\bf nilpotent minimum algebras}. This variety has been completely described in \cite{Gispert2003}; these are the highlights:
\begin{enumerate}
\ib the variety $\mathsf{NM}$ is the subvariety of $\mathsf{BCRL}$ axiomatized by (P), i.e. they are $\mathsf{MTL}$ algebras, $\neg\neg x \app x$ (which gives involution) and
$$
(xy \imp 0) \join ((x \meet y) \imp xy) \app 1;
$$
\ib the finite {$\mathsf{NM}$}-chains are exactly connected or disconnected rotations of G\"odel chains; more precisely if $\alg {N}_k$ denotes the $k$-element nilpotent minimum chain, then we have that $\alg{N}_{2n}$ is the disconnected rotation of the G\"odel chain $\alg G_n$  and $\alg {N}_{2n+1}$ is the connected rotation of $\alg G_n$ (of course $\alg G_0$ is the trivial algebra). Clearly $\alg {N}_2 = \mathbf 2$ and $\alg {N}_3 = \alg \L_2$;
\ib we define $\alg N_\o$ to be the nilpotent minimum algebra whose universe is the real interval $[0,1]$ the order is the natural order and the operations are:
$$
xy = \left\{
       \begin{array}{ll}
         0, & \hbox{if $y \le 1-x$;} \\
         \min(x,y), & \hbox{otherwise.}
       \end{array}
     \right.
\qquad
x \imp y \left\{
           \begin{array}{ll}
             1, & \hbox{if $x \le y$;} \\
             \max((1-x),y), & \hbox{otherwise.}
           \end{array}
         \right.
$$
It is clear that $\neg x = 1-x$ and hence $\neg(1/2)=1/2$. It follows that $[0,1]\setminus \{1/2\}$ is the universe of a subalgebra of $\alg N_\o$ that we denote by $\alg N^*_\o$;
\ib $\alg{N}_{2k}, \alg{N}_{2k+1} \le \alg{N}_{2n+1}$ for $k \le n$; $\alg{N}_{2k} \le \alg{N}_{2n}$ for $k \le n$; $\alg N_{2n} \le \alg N^*_\o$ for all $n$; $\alg N_{2n+1} \le \alg N_\o$ for all $n$;
\ib $\HH(\alg N_{2n+1}) = \{\alg N_k: k \le 2n+1\}$, $\HH(\alg N_{2n}) =\{\alg N_{2k} : k \le n\}$;
\ib the variety $\mathsf{NM}$ is generated by all the chains $\{\alg{N}_{2n+1}: n\in \mathbb N\}$ or by any infinite chain having an element $a$ such that $\neg a=a$; hence $\mathsf{NM} = \VV(\alg N_\o)$;
\ib every subvariety of $\mathsf{NM}$ is generated by its finite algebras, so it has the finite model property and each splitting algebra is finite;
\ib $\mathsf{NM}^* = \VV(\alg N^*_\o) = \VV(\{\alg N_{2k}: k \in \mathbb N\})$ is a proper subvariety of $\mathsf{NM}$.
\end{enumerate}
Now putting together all we know, we observe that:

\begin{lemma}\label{hspu} For every $n \in \mathbb N$,  $\HH\SU\PP_u(K(\alg {N}_{2n+1})) \sse \SU K(\{\alg {N}_k : k \le 2n+1\})$ and $\HH\SU\PP_u(K(\alg {N}_{2n})) \sse \SU K(\{\alg {N}_{2k} : k \le n\})$.
\end{lemma}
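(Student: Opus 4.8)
The plan is to derive this directly from the general machinery already assembled, in particular Lemma \ref{techlemma}(3) together with the structural facts about $\mathsf{NM}$ listed above. The containment $\HH\SU\PP_u(K(\vv K)) \sse \SU K(\HH\SU\PP_u(\vv K))$ is exactly Lemma \ref{techlemma}(3), so the only thing to check is what $\HH\SU\PP_u$ does to the single finite chain $\alg N_{2n+1}$ (resp.\ $\alg N_{2n}$) inside $\mathsf{BCRL}$.

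First I would note that $\alg N_{2n+1}$ is a finite algebra, so any ultrapower of it is isomorphic to $\alg N_{2n+1}$ itself; hence $\PP_u(\alg N_{2n+1}) = \{\alg N_{2n+1}\}$ up to isomorphism (and likewise for $\alg N_{2n}$). Then $\SU\PP_u(\alg N_{2n+1})$ is just the class of subalgebras of $\alg N_{2n+1}$, and $\HH\SU\PP_u(\alg N_{2n+1})$ is the class of homomorphic images of those subalgebras. By item (iv) in the list of facts about $\mathsf{NM}$, the subalgebras of $\alg N_{2n+1}$ are among the $\alg N_k$ with $k \le 2n+1$, and by item (v), $\HH(\alg N_{2n+1}) = \{\alg N_k : k \le 2n+1\}$ while each $\HH(\alg N_k)$ for $k \le 2n+1$ is again contained in $\{\alg N_j : j \le 2n+1\}$ (homomorphic images of chains are chains of no greater length, and more precisely $\HH(\alg N_{2k+1}) = \{\alg N_j : j \le 2k+1\}$, $\HH(\alg N_{2k}) = \{\alg N_{2j} : j \le k\}$). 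Combining these, $\HH\SU\PP_u(\alg N_{2n+1}) \sse \{\alg N_k : k \le 2n+1\}$. Plugging this into Lemma \ref{techlemma}(3) gives
$$
\HH\SU\PP_u(K(\alg N_{2n+1})) \sse \SU K(\HH\SU\PP_u(\alg N_{2n+1})) \sse \SU K(\{\alg N_k : k \le 2n+1\}),
$$
which is the first assertion.

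For the second assertion the argument is identical, using instead that by item (iv) the subalgebras of $\alg N_{2n}$ lie among $\{\alg N_{2k} : k \le n\}$ and by item (v) $\HH(\alg N_{2n}) = \{\alg N_{2k} : k \le n\}$, and that this family is closed under $\HH$ and $\SU$ within itself (again since homomorphic images and subalgebras of even chains $\alg N_{2k}$ are even chains $\alg N_{2j}$ with $j \le k$). Hence $\HH\SU\PP_u(\alg N_{2n}) \sse \{\alg N_{2k} : k \le n\}$, and applying Lemma \ref{techlemma}(3) once more yields $\HH\SU\PP_u(K(\alg N_{2n})) \sse \SU K(\{\alg N_{2k} : k \le n\})$. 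There is no real obstacle here: the proof is a bookkeeping exercise assembling Lemma \ref{techlemma}(3) with the explicit descriptions of $\HH$ and $\SU$ for finite nilpotent minimum chains; the only point requiring a moment of care is the observation that finiteness collapses the ultrapower operator, so that $\PP_u$ contributes nothing new.
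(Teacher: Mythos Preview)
Your proof is correct and follows essentially the same approach as the paper: apply Lemma~\ref{techlemma}(3), collapse $\PP_u$ by finiteness, and then identify $\HH\SU(\alg N_{2n+1})$ (resp.\ $\HH\SU(\alg N_{2n})$) with the appropriate set of finite $\mathsf{NM}$-chains. The only cosmetic difference is that the paper routes the last step through the rotation description $\alg N_{2n+1}=\alg G_n^{\d_2}$ and the known $\HH\SU$ of G\"odel chains, whereas you invoke the bullet points (iv) and (v) directly; both arrive at the same set.
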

\begin{proof} We compute:
\begin{align*}
\HH\SU\PP_u(K(\alg {N}_{2n+1})) &\sse \SU K(\HH\SU\PP_u(\alg{N}_{2n+1})) = \SU K(\HH\SU(\alg{N}_{2n+1}))\\
&= \SU K(\HH\SU(\alg G_n^{\d_2}))= \SU K(\HH\SU(\alg G_n)^{\d_2})\\
&= \SU K(\HH\SU(\{\alg G_k: k \le 2n+1\}^{\d_2})) \\
&= \SU K(\{\alg {N}_k : k \le 2n+1\});
\end{align*}
the second point follows by a similar argument.
\end{proof}

It is easy to see that the non-admissible subalgebras of $K(\alg{N}_{2n})$ are exactly the  admissible subalgebras of $K(\alg{N}_{2k})$ with $k \le n$ and the non admissible subalgebras of $K(\alg{N}_{2n+1})$ are exactly the admissible subalgebras of $K(\alg{N}_{2k})$ and $K(\alg{N}_{2k+1})$ with $k\le n$. Since $\mathsf{NM}$ is involutive the admissible subalgebras can be computed using Theorem \ref{kalmanwajsberg}; it follows that
that $K(\alg N_{k})$ has $k$ admissible subalgebras and they can be found by computing $x \oplus y = (x\imp 0) \imp y$ on each $\alg N_k$. As an example in Figure \ref{n5} we draw the proper admissible subalgebras of $K(\alg N_5)$, where $N_5 = \{0<\neg a< b=\neg b< a<1\}$.

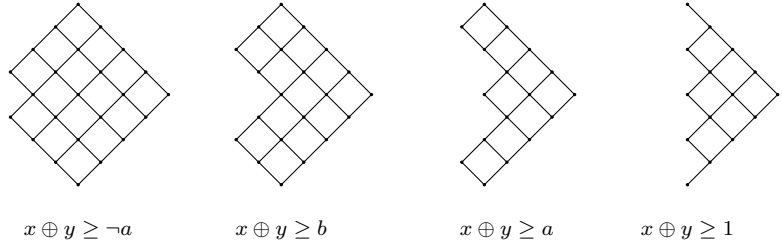
\begin{figure}[htbp]
\begin{center}
\begin{tikzpicture}[scale=.3]
\draw (0,2) -- (4,6) -- (0,10);
\draw (0,2) -- (-3,5);
\draw (0,10) --(-3,7);
\draw (1,3) -- (-3,7);
\draw (2,4) -- (-2,8);
\draw (3,5) -- (-1,9);
\draw (-1,3) -- (3,7);
\draw (-2,4) -- (2,8);
\draw (-3,5) -- (1,9);
\draw[fill] (0,2) circle [radius=0.05];
\draw[fill] (0,4) circle [radius=0.05];
\draw[fill] (0,6) circle [radius=0.05];
\draw[fill] (0,8) circle [radius=0.05];
\draw[fill] (0,10) circle [radius=0.05];
\draw[fill] (-1,3) circle [radius=0.05];
\draw[fill] (-1,5) circle [radius=0.05];
\draw[fill] (-1,7) circle [radius=0.05];
\draw[fill] (-1,9) circle [radius=0.05];
\draw[fill] (-2,4) circle [radius=0.05];
\draw[fill] (-2,6) circle [radius=0.05];
\draw[fill] (-2,8) circle [radius=0.05];
\draw[fill] (-3,5) circle [radius=0.05];
\draw[fill] (-3,7) circle [radius=0.05];
\draw[fill] (1,3) circle [radius=0.05];
\draw[fill] (1,5) circle [radius=0.05];
\draw[fill] (1,7) circle [radius=0.05];
\draw[fill] (1,9) circle [radius=0.05];
\draw[fill] (2,4) circle [radius=0.05];
\draw[fill] (2,6) circle [radius=0.05];
\draw[fill] (2,8) circle [radius=0.05];
\draw[fill] (3,5) circle [radius=0.05];
\draw[fill] (3,7) circle [radius=0.05];
\draw[fill] (4,6) circle [radius=0.05];
\node at (0,0) {\footnotesize $x \oplus y \ge \neg a$};
\draw (7,4) -- (9,2) -- (13,6) -- (9,10) -- (7,8);
\draw (8,3) -- (12,7);
\draw (7,4) -- (11,8);
\draw(8,7) -- (10,9);
\draw (10,3) -- (8,5);
\draw (11,4) -- (7,8);
\draw (12,5) -- (8,9);
\draw[fill] (9,2) circle [radius=0.05];
\draw[fill] (9,4) circle [radius=0.05];
\draw[fill] (9,6) circle [radius=0.05];
\draw[fill] (9,8) circle [radius=0.05];
\draw[fill] (9,10) circle [radius=0.05];
\draw[fill] (8,3) circle [radius=0.05];
\draw[fill] (8,5) circle [radius=0.05];
\draw[fill] (8,7) circle [radius=0.05];
\draw[fill] (8,9) circle [radius=0.05];
\draw[fill] (7,4) circle [radius=0.05];
\draw[fill] (7,8) circle [radius=0.05];
\draw[fill] (10,3) circle [radius=0.05];
\draw[fill] (10,5) circle [radius=0.05];
\draw[fill] (10,7) circle [radius=0.05];
\draw[fill] (10,9) circle [radius=0.05];
\draw[fill] (11,4) circle [radius=0.05];
\draw[fill] (11,6) circle [radius=0.05];
\draw[fill] (11,8) circle [radius=0.05];
\draw[fill] (12,5) circle [radius=0.05];
\draw[fill] (12,7) circle [radius=0.05];
\draw[fill] (13,6) circle [radius=0.05];
\node at (9,0) {\footnotesize $x \oplus y \ge b$};
\draw (17,3) -- (18,2) -- (22,6) -- (18,10) -- (17,9);
\draw (19,3) -- (18,4);
\draw (20,4) -- (19,5);
\draw (17,3) -- (21,7);
\draw (17,9) -- (21,5);
\draw (20,8) -- (19,7);
\draw (19,9) -- (18,8);
\draw (19,5) -- (18,6) -- (19,7);
\draw[fill] (18,2) circle [radius=0.05];
\draw[fill] (18,4) circle [radius=0.05];
\draw[fill] (18,6) circle [radius=0.05];
\draw[fill] (18,8) circle [radius=0.05];
\draw[fill] (18,10) circle [radius=0.05];
\draw[fill] (17,3) circle [radius=0.05];
\draw[fill] (17,9) circle [radius=0.05];
\draw[fill] (19,3) circle [radius=0.05];
\draw[fill] (19,5) circle [radius=0.05];
\draw[fill] (19,7) circle [radius=0.05];
\draw[fill] (19,9) circle [radius=0.05];
\draw[fill] (20,4) circle [radius=0.05];
\draw[fill] (20,6) circle [radius=0.05];
\draw[fill] (20,8) circle [radius=0.05];
\draw[fill] (21,5) circle [radius=0.05];
\draw[fill] (21,7) circle [radius=0.05];
\draw[fill] (22,6) circle [radius=0.05];
\node at (19,0) {\footnotesize $x \oplus y \ge a$};
\draw (27,2) -- (31,6) -- (27,10);
\draw (28,9) -- (27,8);
\draw (29,8) -- (28,7);
\draw (30,7) -- (27,4);
\draw (28,3) -- (27,4);
\draw (29,4) -- (28,5);
\draw (30,5) -- (27,8);
\draw (28,5) -- (27,6) -- (28,7);
\draw[fill] (27,2) circle [radius=0.05];
\draw[fill] (27,4) circle [radius=0.05];
\draw[fill] (27,6) circle [radius=0.05];
\draw[fill] (27,8) circle [radius=0.05];
\draw[fill] (27,10) circle [radius=0.05];
\draw[fill] (28,3) circle [radius=0.05];
\draw[fill] (28,5) circle [radius=0.05];
\draw[fill] (28,7) circle [radius=0.05];
\draw[fill] (28,9) circle [radius=0.05];
\draw[fill] (29,4) circle [radius=0.05];
\draw[fill] (29,6) circle [radius=0.05];
\draw[fill] (29,8) circle [radius=0.05];
\draw[fill] (30,5) circle [radius=0.05];
\draw[fill] (30,7) circle [radius=0.05];
\draw[fill] (31,6) circle [radius=0.05];
\node at (27,0) {\footnotesize $x \oplus y \ge 1$};
\end{tikzpicture}
\end{center}
\caption{The admissible subalgebras of $K(\alg N_5)$\label{n5}}
\end{figure}

In order to describe the lattice of subvarieties of $K(\mathsf{NM})$, we first observe that $\alg \L_2$ and $\alg N_4$ are the only rigid algebras in $\mathsf{NM}$, therefore each one generates a cover of $\VV(\alg K_3)$ different from $K(\mathsf{BA})$.

Then we observe that there exist retractions $\gamma_{2n}:\alg N_{2n}\to \alg 2$ for each $k$. Thus if we consider $K(\alg N_{2n},F)$ for some lattice filter $F$ containing $x$ such that $\gamma_{2n}(x)=0$ (that is lattice filters strictly greater than the radical), then we will have $\alg K_4 \in \HH(K(\alg N_{2n},F))$ by considering the morphism $f(a,b)=(\gamma_{2n}(a),\gamma_{2n}(b))$. It is also easy to see that if the filter $F$ is contained in the radical (i.e. $\gamma_{2n}(x)=1$ for each $x\in F$), then $\alg K_4 \not\in \HH(K(\alg N_{2n},F))$.
Moreover, if we consider a morphism $\gamma_{2n,2k}:\alg N_{2n}\to \alg N_{2k}$, then $\alg K(\alg N_{2k}) \in \HH(K(\alg N_{2n},F))$ if and only if the lattice filter $F$ contains an element $x$ such that $\gamma_{2n,2k}(x)=0$.

The case of $\alg N_{2n+1}$ is similar, if we consider a morphism $\gamma_{2n+1,2k+1}:\alg N_{2n+1}\to \alg N_{2k+1}$, then $\alg K(\alg N_{2k+1}) \in \HH(K(\alg N_{2n+1},F))$ if and only if the lattice filter $F$ contains an element $x$ such that $\gamma_{2n+1,2k+1}(x)=0$.
In Figure \ref{N5lattice} we describe the lattice of subvarieties of $\VV(K(\alg N_5))$.

To simplify notation, we name $K_0(\alg \L_2)\leq K_1(\alg \L_2)\leq K(\alg \L_2)$, $K_0(\alg N_4)\leq K_1(\alg N_4)\leq K_2(\alg N_4)\leq K(\alg N_4)$ and $K_0(\alg N_5)\leq K_1(\alg N_5)\leq K_2(\alg N_5)\leq K_3(\alg N_5)\leq K(\alg N_5)$ the admissible subalgebras of $\alg \L_2$, $\alg N_4$ and $\alg N_5$, respectively. From the previous observations, we recall that

\begin{itemize}
	\item $\alg K_4 \leq K(\alg\L_2)$ and $K(\alg N_4) \leq K(\alg N_5)$,
	\item $K_0(\alg \L_2) \leq K_0(\alg N_5)$ and $K_0(\alg N_4) \leq K_0(\alg N_5)$,
	\item $K_1(\alg \L_2) \leq K_2(\alg N_5)$,
	\item $K_1(\alg N_4) \leq K_1(\alg N_5)$,
	\item $K_2(\alg N_4) \leq K_3(\alg N_5)$,
	\item $\alg K_4 \in \HH(K_2(\alg N_4))$,
	\item $K(\alg\L_2)\in\HH(K_3(\alg N_5))$.
\end{itemize}

With this information, the reader can verify that the lattice of subvarieties of $\VV(K(\alg N_5))$ is effectively the one drawn in Figure \ref{N5lattice}.

\begin{figure}[htbp]
\begin{center}
\begin{tikzpicture}[scale=.03]
\draw (180.,-140.)-- (180.,-120.);
\draw (180.,-140.)-- (120.,-120.);
\draw (180.,-140.)-- (220.,-120.);
\draw (120.,-120.)-- (120.,-100.);
\draw (180.,-120.)-- (120.,-100.);
\draw (180.,-120.)-- (220.,-100.);
\draw (220.,-120.)-- (220.,-100.);
\draw (120.,-120.)-- (160.,-100.);
\draw (220.,-120.)-- (160.,-100.);
\draw (120.,-100.)-- (160.,-80.);
\draw (160.,-100.)-- (160.,-80.);
\draw (220.,-100.)-- (160.,-80.);
\draw (120.,-120.)-- (60.,-100.2);
\draw (220.,-120.)-- (260.,-100.);
\draw (60.,-100.2)-- (60.,-80.);
\draw (120.,-100.)-- (60.,-80.);
\draw (60.,-100.2)-- (100.,-80.);
\draw (160.,-100.)-- (100.,-80.);
\draw (160.,-100.)-- (200.,-80.);
\draw (260.,-100.)-- (200.,-80.);
\draw (220.,-100.)-- (260.,-80.);
\draw (260.,-100.)-- (260.,-80.);
\draw (60.,-80.)-- (100.,-59.8);
\draw (100.,-80.)-- (100.,-59.8);
\draw (200.,-80.)-- (200.,-60.);
\draw (260.,-80.)-- (200.,-60.);
\draw (160.,-80.)-- (100.,-59.8);
\draw (160.,-80.)-- (200.,-60.);
\draw (100.,-59.8)-- (140.,-39.9);
\draw (140.,-39.9)-- (200.,-60.);
\draw (140.,-60.)-- (140.,-39.9);
\draw (100.,-80.)-- (140.,-60.);
\draw (200.,-80.)-- (140.,-60.);
\draw (160.,-100.)-- (175.,-80.);
\draw (175.,-80.)-- (115.,-60.);
\draw (175.,-80.)-- (215.,-60.);
\draw (175.,-80.)-- (175.,-60.);
\draw (160.,-80.)-- (175.,-60.);
\draw (100.,-80.)-- (115.,-60.);
\draw (200.,-80.)-- (215.,-60.);
\draw (215.,-60.)-- (215.,-39.80205530786387);
\draw (175.,-60.)-- (215.,-39.80205530786387);
\draw (175.,-60.)-- (115.,-40.);
\draw (115.,-60.)-- (115.,-40.);
\draw (100.,-59.8)-- (115.,-40.);
\draw (200.,-60.)-- (215.,-39.80205530786387);
\draw (115.,-40.)-- (155.,-19.802055307863867);
\draw (140.,-39.9)-- (155.,-19.802055307863867);
\draw (215.,-39.80205530786387)-- (155.,-19.802055307863867);
\draw (115.,-60.)-- (155.,-40.);
\draw (215.,-60.)-- (155.,-40.);
\draw (140.,-60.)-- (155.,-40.);
\draw (155.,-40.)-- (155.,-19.802055307863867);
\draw (215.,-60.)-- (230.,-40.);
\draw (155.,-40.)-- (170.,-20.);
\draw (230.,-40.)-- (170.,-20.);
\draw (215.,-39.80205530786387)-- (230.,-20.);
\draw (230.,-40.)-- (230.,-20.);
\draw (230.,-20.)-- (170.,0.);
\draw (170.,-20.)-- (170.,0.);
\draw (155.,-19.802055307863867)-- (170.,0.);
\draw (60.,-80.)-- (60.,-60.);
\draw (60.,-60.)-- (100.,-40.);
\draw (100.,-59.8)-- (100.,-40.);
\draw (155.,0.)-- (115.,-20.);
\draw (140.,-19.9)-- (155.,0.);
\draw (100.,-40.)-- (115.,-20.);
\draw (100.,-40.)-- (140.,-19.9);
\draw (115.,-40.)-- (115.,-20.);
\draw (155.,-19.802055307863867)-- (155.,0.);
\draw (155.,0.)-- (170.,20.);
\draw (170.,0.)-- (170.,20.);
\draw (170.,-20.)-- (175.,0.);
\draw (170.,0.)-- (175.,20.);
\draw (175.,0.)-- (175.,20.);
\draw (175.,20.)-- (175.,40.);
\draw (170.,20.)-- (175.,40.);
\draw (260.,-80.)-- (300.,-60.);
\draw (200.,-60.)-- (240.,-40.);
\draw (300.,-60.)-- (240.,-40.);
\draw (240.,-40.)-- (180.,-20.);
\draw (240.,-40.)-- (255.,-20.);
\draw (180.,-20.)-- (180.,0.);
\draw (255.,-20.)-- (270.,0.);
\draw (255.,-20.)-- (195.1144425449063,-0.11444254490632133);
\draw (180.,-20.)-- (195.1144425449063,-0.11444254490632133);
\draw (140.,-39.9)-- (180.,-20.);
\draw (215.,-39.80205530786387)-- (255.,-20.);
\draw (140.,-19.9)-- (180.,0.);
\draw (155.,-19.802055307863867)-- (195.1144425449063,-0.11444254490632133);
\draw (230.,-20.)-- (270.,0.);
\draw (180.,0.)-- (195.,20.);
\draw (195.1144425449063,-0.11444254490632133)-- (195.,20.);
\draw (155.,0.)-- (195.,20.);
\draw (195.1144425449063,-0.11444254490632133)-- (210.,20.);
\draw (270.,0.)-- (210.,20.);
\draw (210.,20.)-- (215.,40.);
\draw (170.,0.)-- (210.,20.);
\draw (175.,20.)-- (215.,40.);
\draw (170.,20.)-- (210.,40.);
\draw (195.,20.)-- (210.,40.);
\draw (210.,20.)-- (210.,40.);
\draw (175.,40.)-- (215.,60.);
\draw (215.,40.)-- (215.,60.);
\draw (210.,40.)-- (215.,60.);
\draw (215.,60.)-- (215.,80.);
\draw (300.,-60.)-- (337.39363185266586,-39.58086655333234);
\draw (337.39363185266586,-39.58086655333234)-- (280.,-20.);
\draw (280.,-20.)-- (220.,0.);
\draw (220.,0.)-- (220.,20.);
\draw (280.,-20.)-- (295.,0.);
\draw (220.,0.)-- (235.,20.);
\draw (295.,0.)-- (235.,20.);
\draw (295.,0.)-- (310.,20.);
\draw (310.,20.)-- (250.,40.);
\draw (235.,20.)-- (250.,40.);
\draw (220.,20.)-- (235.,40.);
\draw (235.,20.)-- (235.,40.);
\draw (235.,40.)-- (250.,60.);
\draw (250.,40.)-- (250.,60.);
\draw (250.,40.)-- (255.,60.);
\draw (250.,60.)-- (255.,80.);
\draw (255.,60.)-- (255.,80.);
\draw (255.,80.)-- (255.,100.);
\draw (240.,-40.)-- (280.,-20.);
\draw (255.,-20.)-- (295.,0.);
\draw (180.,-20.)-- (220.,0.);
\draw (270.,0.)-- (310.,20.);
\draw (195.1144425449063,-0.11444254490632133)-- (235.,20.);
\draw (180.,0.)-- (220.,20.);
\draw (210.,20.)-- (250.,40.);
\draw (195.,20.)-- (235.,40.);
\draw (215.,40.)-- (255.,60.);
\draw (210.,40.)-- (250.,60.);
\draw (215.,60.)-- (255.,80.);
\draw (215.,80.)-- (255.,100.);
\draw (255.,100.)-- (255.,120.);
\draw (180.,-160.)-- (180.,-140.);
\draw (140.,-39.9)-- (140.,-19.9);
\draw [fill=gray] (180.,-140.) circle [radius=2];
\node[left] at (180.,-140.) {\tiny $\VV(\alg K_3)$};
\draw [fill=gray] (120.,-120.) circle [radius=2];
\node[left] at (120.,-120.) {\tiny $\VV(K_0(\alg \L_2))$};
\draw [fill=gray] (180.,-120.) circle [radius=2];
\node[below left] at (180.,-120.) {\tiny $K(\mathsf{BA}) = \VV(\alg K_4)$};
\draw [fill=gray] (220.,-120.) circle [radius=2];
\node[right] at (220.,-120.) {\tiny $\VV(K_0(\alg N_4))$};
\draw [fill=black] (160.,-100.) circle [radius=2];
\draw [fill=black] (120.,-100.) circle [radius=2];
\draw [fill=black] (220.,-100.) circle [radius=2];
\draw [fill=black] (160.,-80.) circle [radius=2];
\draw [fill=gray] (60.,-100.2) circle [radius=2];
\node[left] at (60.,-100.2) {\tiny $\VV(K_1(\alg \L_2))$};
\draw [fill=gray] (260.,-100.) circle [radius=2];
\node[right] at (260.,-100.) {\tiny $\VV(K_1(\alg N_4))$};
\draw [fill=black] (60.,-80.) circle [radius=2];
\draw [fill=black] (100.,-80.) circle [radius=2];
\draw [fill=black] (100.,-59.8) circle [radius=2];
\draw [fill=black] (140.,-60.) circle [radius=2];
\draw [fill=black] (200.,-60.) circle [radius=2];
\draw [fill=black] (200.,-80.) circle [radius=2];
\draw [fill=black] (260.,-80.) circle [radius=2];
\draw [fill=black] (140.,-39.9) circle [radius=2];
\draw [fill=gray] (175.,-80.) circle [radius=2];
\node[below right] at (175.,-80.) {\tiny $\VV(K_0(\alg N_5))$};
\draw [fill=black] (115.,-60.) circle [radius=2];
\draw [fill=black] (215.,-60.) circle [radius=2];
\draw [fill=black] (175.,-60.) circle [radius=2];
\draw [fill=black] (115.,-40.) circle [radius=2];
\draw [fill=black] (215.,-39.80205530786387) circle [radius=2];
\draw [fill=black] (155.,-19.802055307863867) circle [radius=2];
\draw [fill=black] (155.,-40.) circle [radius=2];
\draw [fill=gray] (230.,-40.) circle [radius=2];
\node[below right] at (230.,-40.) {\tiny $\VV(K_1(\alg N_5))$};
\draw [fill=black] (170.,0.) circle [radius=2];
\draw [fill=black] (170.,-20.) circle [radius=2];
\draw [fill=black] (230.,-20.) circle [radius=2];
\draw [fill=gray] (60.,-60.) circle [radius=2];
\node[left] at (60.,-60.) {\tiny $\VV(K(\alg \L_2))$};
\draw [fill=black] (100.,-40.) circle [radius=2];
\draw [fill=black] (115.,-20.) circle [radius=2];
\draw [fill=black] (140.,-19.9) circle [radius=2];
\draw [fill=black] (155.,0.) circle [radius=2];
\draw [fill=black] (170.,20.) circle [radius=2];
\draw [fill=gray] (175.,0.) circle [radius=2];
\node[above left] at (175.,0.) {\tiny $\VV(K_2(\alg N_5))$};
\draw [fill=black] (175.,20.) circle [radius=2];
\draw [fill=black] (175.,40.) circle [radius=2];
\draw [fill=gray] (300.,-60.) circle [radius=2];
\node[right] at (300.,-60.) {\tiny $\VV(K_2(\alg N_4))$};
\draw [fill=black] (180.,0.) circle [radius=2];
\draw [fill=black] (180.,-20.) circle [radius=2];
\draw [fill=black] (195.1144425449063,-0.11444254490632133) circle [radius=2];
\draw [fill=black] (255.,-20.) circle [radius=2];
\draw [fill=black] (240.,-40.) circle [radius=2];
\draw [fill=black] (270.,0.) circle [radius=2];
\draw [fill=black] (215.,40.) circle [radius=2];
\draw [fill=black] (195.,20.) circle [radius=2];
\draw [fill=black] (210.,20.) circle [radius=2];
\draw [fill=black] (210.,40.) circle [radius=2];
\draw [fill=black] (215.,60.) circle [radius=2];
\draw [fill=gray] (215.,80.) circle [radius=2];
\node[left] at (215.,80.) {\tiny $\VV(K_3(\alg N_5))$};
\draw [fill=gray] (337.39363185266586,-39.58086655333234) circle [radius=2];
\node[right] at (337.39363185266586,-39.58086655333234) {\tiny $\VV(K(\alg N_4))$};
\draw [fill=black] (280.,-20.) circle [radius=2];
\draw [fill=black] (295.,0.) circle [radius=2];
\draw [fill=black] (310.,20.) circle [radius=2];
\draw [fill=black] (220.,0.) circle [radius=2];
\draw [fill=black] (255.,60.) circle [radius=2];
\draw [fill=black] (220.,20.) circle [radius=2];
\draw [fill=black] (235.,20.) circle [radius=2];
\draw [fill=black] (250.,40.) circle [radius=2];
\draw [fill=black] (255.,100.) circle [radius=2];
\draw [fill=black] (235.,40.) circle [radius=2];
\draw [fill=black] (255.,80.) circle [radius=2];
\draw [fill=black] (250.,60.) circle [radius=2];
\draw [fill=gray] (255.,120.) circle [radius=2];
\node[above] at (255.,120.) {\tiny $\VV(K(\alg N_5))$};
\draw [fill=black] (180.,-160.) circle [radius=2];
\node[right] at (180.,-160.) {\tiny $\mathsf{T}$};
\end{tikzpicture}
\end{center}
\caption{$\Lambda(\VV(K(N_5)))$\label{N5lattice}}
\end{figure}

\subsection{Drastic product K-lattices}

A {\bf drastic product chain}, briefly a DP-chain, is  chain in $\mathsf{BCRL}$ where the monoid operation (which determines the entire structure) is defined as
$$
ab = \left\{
       \begin{array}{ll}
         0, & \hbox{if $a,b \ne 1$;} \\
         a \meet b, & \hbox{otherwise.}
       \end{array}
     \right.
$$
We denote by $\alg {DP}_n$ the DP-chain with $n$-elements; note that each DP-chain is simple and $\alg{DP}_n$ has subalgebras isomorphic with $\alg{DP}_k$ for all $k\le n$. In particular $\alg{DP}_3 \cong \alg \L_2$.
DP-chains have been considered (under a different name) by C. Noguera in his PhD Thesis \cite{Noguera2007} and investigated in \cite{AguzzoliBianchiValota2014}. In details:
\begin{enumerate}
\ib the variety $\mathsf {DP}$ generated by all DP-chains is axiomatized with respect to $\mathsf{MTL}$ by $x \join \neg(x^2) \app 1$;
\ib $\mathsf{DP}$ is generated by any infinite chain o by any infinite set of finite chains;
\ib hence any proper subvariety of $\mathsf{DP}$ is generated by a single finite chain and thus $\Lambda(\mathsf{DP})$ is a chain of order $\o+1$.
\end{enumerate}
We remark that any DP-chain has a coatom $a_1$; in fact if $\alg D$ is an infinite DP-chain and $u,v \in D\setminus\{0,1\}$ then $\neg u = \neg v$ and we can take $a$ to be that common value; moreover in a DP-chain with coatom $a_1$
$$
x \imp y =\left\{
            \begin{array}{ll}
              1, & \hbox{if $x \le y$;} \\
              a_1, & \hbox{if $1 >x > y$;} \\
              y, & \hbox{if $x=1$.}
            \end{array}
          \right.
$$
The main difference between this example and the previous ones is that in general the admissible subalgebras of $K(\alg{DP}_n)$ do not form a chain; as a matter of fact they form a chain if and only if $n \le 4$ and this
will be clear after our description of the lattice of admissible subalgebras of $K(\alg {DP}_n)$.

To investigate the admissible subalgebras of the Kalman product of each DP-chain $\alg{DP}_n$, we assume $n\geq 4$ (as $\alg{DP}_2 \cong \alg 2$ and $\alg{DP}_3 \cong \alg \L_2$, those cases have already been considered), and let
$$
DP_n =\{0=a_{n-1} \prec a_{n-2} \prec \dots \prec a_1 \prec 1\}.
$$
Define
\begin{align*} K_n^\emptyset = \{(x,1),(1,x),(x,a_1),(a_1,x):x\in DP_n\}. \end{align*}

\begin{lemma}For $n\geq 4$, $\alg K_n^\emptyset$ is the smallest admissible subalgebra of $K(\alg{DP}_n)$. Moreover, every sublattice of $K(\alg{DP}_n)$ closed under $\sim$ and containing $K_n^\emptyset$ is the universe of an admissible subalgebra of $K(\alg{DP}_n)$.\end{lemma}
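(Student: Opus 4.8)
The plan is to reduce both statements to a single \emph{closure claim}: if $S$ is a sublattice of $K(\alg{DP}_n)$ closed under $\nneg$ and containing $K_n^\emptyset$, then $S$ is also closed under $\cdot$ and $\imp$. Granting this, note that $0=\la 0,1\ra$ and $1=\la 1,1\ra$ both lie in $K_n^\emptyset$, so such an $S$ is the universe of a subalgebra of $K(\alg{DP}_n)$; and since $(K_n^\emptyset)^-=\{\la x,1\ra:x\in DP_n\}$ is the whole negative cone of $K(\alg{DP}_n)$, that subalgebra is admissible. This is precisely the second sentence of the Lemma. For the first sentence, one checks directly from the formulas defining $\join$ and $\meet$ in the $K$-construction (and the fact that on a chain the meet or join of two elements is one of the two) that $K_n^\emptyset$ is itself a sublattice closed under $\nneg$---meet-closure reducing to join-closure by applying $\nneg$; hence by the closure claim $\alg K_n^\emptyset$ is an admissible subalgebra, and its minimality is proved at the end.

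To prove the closure claim, fix $\la a,b\ra,\la c,d\ra\in S$, and note first that $\la a,1\ra=\la a,b\ra\meet 1\in S$ and $\la 1,b\ra=\nneg\la b,1\ra=\nneg(\nneg\la a,b\ra\meet 1)\in S$, and likewise for $\la c,d\ra$. The argument is a case analysis on which coordinates equal $1$, exploiting two special features of drastic product chains: (i) $xy\in\{0,x,y\}$, with $xy=0$ whenever $x,y\neq 1$; and (ii) $1\imp y=y$, while $x\imp y\in\{1,a_1\}$ whenever $x\neq 1$. For the product $\la a,b\ra\cdot\la c,d\ra=\la ac,(a\imp d)\meet(c\imp b)\ra$: if $a=c=1$ it equals $\la 1,b\ra\join\la 1,d\ra$; if exactly one of $a,c$ is $1$, say $a=1$ (the other case is symmetric by commutativity), simplifying via (ii) shows the value is $\la c,d\ra$ or $\la c,a_1\ra\in K_n^\emptyset$; and if $a,c\neq 1$ then $ac=0$ while the second coordinate lies in $\{1,a_1\}$, so the product is $\la 0,1\ra$ or $\la 0,a_1\ra$, both in $K_n^\emptyset$. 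The implication $\la a,b\ra\imp\la c,d\ra=\la(a\imp c)\meet(d\imp b),ad\ra$ is treated identically: if $a=1$ or $d=1$ every subcase collapses to $\nneg\la a,b\ra=\la b,a\ra$, to some $\la x,1\ra$, or to some $\la a_1,x\ra$; and if $a,d\neq 1$ then $ad=0$ and the first coordinate is in $\{1,a_1\}$, so the value is $\la 1,0\ra$ or $\la a_1,0\ra$. In each case the result lies in $S$, which proves the claim.

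It remains to show $\alg K_n^\emptyset$ is the \emph{smallest} admissible subalgebra, i.e. $K_n^\emptyset\subseteq B$ for every admissible $\alg B\le K(\alg{DP}_n)$. Admissibility forces $\la x,1\ra\in B$ for all $x\in DP_n$, hence $\la 1,x\ra=\nneg\la x,1\ra\in B$. For $x\neq 0,1$ one computes $\la x,1\ra\imp\la 0,1\ra=\la x\imp 0,x\ra=\la a_1,x\ra$, so $\la a_1,x\ra\in B$ and $\la x,a_1\ra=\nneg\la a_1,x\ra\in B$. Since $n\ge 4$, the lower cover $a_2$ of the coatom $a_1$ satisfies $a_2\neq 0,1$, so $\la a_1,a_2\ra\in B$, and then $\la a_1,a_2\ra\cdot\la a_1,1\ra=\la a_1\cdot a_1,(a_1\imp 1)\meet(a_1\imp a_2)\ra=\la 0,a_1\ra\in B$, whence also $\la a_1,0\ra=\nneg\la 0,a_1\ra\in B$. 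These elements are exactly the members of $K_n^\emptyset$, so $K_n^\emptyset\subseteq B$. The main obstacle is precisely this last step: in contrast to the linearly ordered examples treated earlier, an admissible subalgebra of $K(\alg{DP}_n)$ must contain the ``anti-diagonal'' pairs $\la 0,a_1\ra$ and $\la a_1,0\ra$, and the only way to force their presence is through the product $\la a_1,a_2\ra\cdot\la a_1,1\ra$, which is also where the hypothesis $n\ge 4$ enters; by contrast, the case bookkeeping in the closure claim, while lengthy, is mechanical once (i) and (ii) are in place.
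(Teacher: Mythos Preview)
Your proof is correct and follows essentially the same approach as the paper's. The only differences are cosmetic: to force $(x,a_1)$ into any admissible subalgebra the paper computes the product $(a_i,1)\cdot(1,0)=(a_i,a_1)$ whereas you compute the implication $\la x,1\ra\imp\la 0,1\ra=\la a_1,x\ra$, and for the closure claim the paper only verifies closure under the product (tacitly using $x\imp y=\nneg(x\cdot\nneg y)$ in a $1$-involutive lattice) whereas you redo the case analysis for $\imp$ explicitly.
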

\begin{proof}
To show that each admissible subalgebra of $\alg{DP}_n$ contains $K_n^\emptyset$, observe that, for $i=1,\ldots,n-2$,
\begin{align*} (a_i,1)\cdot (1,0) = (a_i,\neg a_i) = (a_i,a_1),\end{align*}
and then if $i=2,\ldots,n-2$,
\begin{align*} (a_1,a_i)\cdot (a_1,a_1) = (0,a_1) = (a_{n-1},a_1),\end{align*}
so the elements $(a_i,a_1),(a_1,a_i)$ belong to each admissible subalgebra.

As $K_n^\emptyset$ is clearly the universe of a sublattice closed under $\sim$, we only need to show that any sublattice of $K(\alg{DP}_n)$ closed under $\sim$ and containing $K_n^\emptyset$ is closed under the product. To show this, observe that for $i,j,k,l = 1,\ldots,n-1$,
\begin{align*} (a_i,a_j)\cdot (a_k,a_l) &= (0,a_i\imp a_l \meet a_k\imp a_j),\\
 (a_i,a_j)\cdot (a_k,1) &= (0,a_k\imp a_j),\\
 (a_i,a_j)\cdot (1,a_k) &= (a_i,a_j),
\end{align*}
and each implication is either $a_1$ or $1$.
\end{proof}

To describe all the subalgebras, consider the lattice with universe
\begin{align*} X_n = \{(a_i,a_j): i\geq j>1\}\end{align*}
where the ordering is given coordinate-wise.

\begin{figure}[htbp]
\begin{center}
\begin{tikzpicture}[scale=.4]
\draw (13.,-7.)-- (14.,-8.);
\draw (14.,0.)-- (13.,-1.);
\draw (14.,-6.)-- (15.,-7.);
\draw (14.,-2.)-- (15.,-1.);
\draw (14.,-2.)-- (13.,-3.);
\draw (14.,-6.)-- (13.,-5.);
\draw (13.,-9.)-- (15.,-7.);
\draw (13.,-7.)-- (14.,-6.);
\draw (13.,1.)-- (15.,-1.);
\draw (13.,-1.)-- (14.,-2.);
\draw (17.,-3.)-- (16.,-4.);
\draw (16.,-4.)-- (17.,-5.);
\draw (17.,-5.)-- (18.,-4.);
\draw (18.,-4.)-- (17.,-3.);
\draw [dash pattern=on 2pt off 2pt] (17.,-3.)-- (15.,-1.);
\draw [dash pattern=on 2pt off 2pt] (16.,-4.)-- (14.,-2.);
\draw [dash pattern=on 2pt off 2pt] (16.,-4.)-- (14.,-6.);
\draw [dash pattern=on 2pt off 2pt] (15.,-7.)-- (17.,-5.);
\draw [dash pattern=on 2pt off 2pt] (13.,-5.)-- (13.5,-4.5);
\draw [dash pattern=on 2pt off 2pt] (13.,-3.)-- (13.5,-3.5);
\draw [fill] (18.,-4.) circle (2.5pt);
\node[right] at (18.,-4.) {$(a_{n-1},a_2)=(0,a_2)$};
\draw [fill] (13.,-9.) circle (2.5pt);
\node[below] at (13.,-9.) {$(a_{n-1},a_{n-1})=(0,0)$};
\draw [fill] (13.,1.) circle (2.5pt);
\node[above] at (13.,1.) {$(a_{2},a_{2})$};
\draw [fill] (14.,0.) circle (2.5pt);
\draw [fill] (13.,-1.) circle (2.5pt);
\draw [fill] (14.,-2.) circle (2.5pt);
\draw [fill] (15.,-1.) circle (2.5pt);
\draw [fill] (13.,-7.) circle (2.5pt);
\draw [fill] (14.,-8.) circle (2.5pt);
\draw [fill] (14.,-6.) circle (2.5pt);
\draw [fill] (15.,-7.) circle (2.5pt);
\draw [fill] (17.,-5.) circle (2.5pt);
\draw [fill] (17.,-3.) circle (2.5pt);
\draw [fill] (16.,-4.) circle (2.5pt);
\draw [fill] (13.,-3.) circle (2.5pt);
\draw [fill] (13.,-5.) circle (2.5pt);
\end{tikzpicture}
\end{center}
\caption{The lattice $X_n$.}
\end{figure}
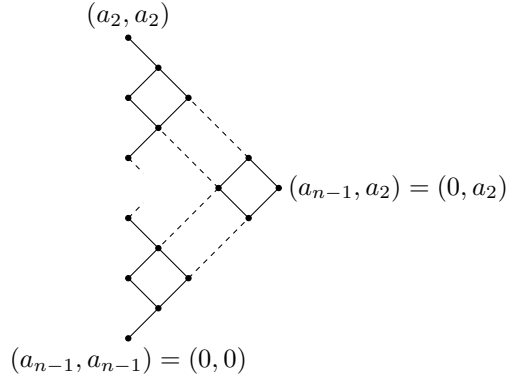

Now, for each up-set $U$ of $X_n$ consider
\begin{align*} K_n^U = K_n^\emptyset \cup \{(x,y): (x,y)\in U \text{ or } (y,x)\in U\}. \end{align*}

From this definition and the previous results, the following Theorem is immediate.

\begin{theorem}For $n\geq 4$, the admissible subalgebras of $K(\alg{DP}_n)$ are $\alg K_n^U$, for each $U$ up-set of $X_n$. Moreover, $\alg K_n^U \subset \alg K_n^W$ if and only if $U\subset W$.\end{theorem}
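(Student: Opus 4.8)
The plan is to read everything off from the preceding lemma, which identifies the admissible subalgebras of $K(\alg{DP}_n)$ with the sublattices of the lattice reduct of $K(\alg{DP}_n)$ that contain $K_n^\emptyset$ and are closed under $\sim$. (An admissible subalgebra is automatically such a sublattice: it is closed under the term operation $\sim x = x \imp 1$, and it contains the least admissible subalgebra $\alg K_n^\emptyset$; the converse direction is exactly the content of the lemma.) So it suffices to establish three things: (a) for every up-set $U$ of $X_n$ the set $K_n^U$ is the universe of such a sublattice; (b) every such sublattice equals $K_n^U$ for some up-set $U$; (c) $U \subseteq W$ if and only if $K_n^U \subseteq K_n^W$, which simultaneously gives injectivity of $U \mapsto K_n^U$ and the ``moreover'' clause.

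For (a), write $S = DP_n \setminus \{a_1,1\}$, so that $K(\alg{DP}_n)$ is the disjoint union $K_n^\emptyset$ with $S \times S$, where $S \times S = X_n \cup \sim X_n$, $\sim(x,y) = (y,x)$, and $X_n \cap \sim X_n$ is the diagonal. That $K_n^U$ contains $K_n^\emptyset$ and is closed under $\sim$ is immediate from the symmetric shape of its definition; and since $\sim$ is a dual automorphism of the lattice reduct, closure under $\wedge$ will follow from closure under $\vee$, so only the latter must be checked. Using $\la a,b\ra \vee \la c,d\ra = \la a \vee c, b \wedge d\ra$: if the first coordinate of one of the two arguments lies in $\{a_1,1\}$, or the second coordinates of both do, then the join lands in $K_n^\emptyset$; otherwise the join lies in $S \times S$ and one shows it lies in $U \cup \sim U$. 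The recurring device here is uniform: using commutativity of $\vee$, pick the argument $p$ whose second coordinate realizes the minimum, so that the join is just $p$ with its first coordinate pushed up to the maximum; then either the result is still in $X_n$, where it dominates (in the coordinatewise order of $X_n$) either $p$ itself or $\sim p$'s preimage in $U$, so up-closure of $U$ applies, or the result has crossed into $\sim X_n$, in which case its $\sim$-image dominates an element of $U$, again by up-closure. I expect this join-closure verification to be the one genuinely computational point of the whole theorem: the sub-cases it splits into are each short, but all of them have to be run, keeping in view the rigidity of $DP_n$ (its implication takes only the values $1$ and $a_1$ unless the antecedent is $1$), which is what kept the products and lattice operations inside the prescribed frame in the two preceding lemmas.

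For (b), let $\alg B$ be a sublattice closed under $\sim$ with $K_n^\emptyset \subseteq B$, and set $U = B \cap X_n$. Closure under $\sim$ gives $B = K_n^\emptyset \cup U \cup \sim U = K_n^U$, since $B \setminus K_n^\emptyset \subseteq S \times S = X_n \cup \sim X_n$. To see that $U$ is an up-set of $X_n$, suppose $(s,t) \in U$ and $(s',t') \in X_n$ with $s \le s'$ and $t \le t'$ in the chain; then $(s,t) \vee (s',1) = (s',t)$ and $(s',t) \wedge (1,t') = (s',t')$, and since $(s',1),(1,t') \in K_n^\emptyset \subseteq B$ we conclude $(s',t') \in B \cap X_n = U$. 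Finally, for (c): $K_n^U$ is the disjoint union of $K_n^\emptyset$ with $U \cup \sim U \subseteq S \times S$, so $K_n^U \subseteq K_n^W$ is equivalent to $U \cup \sim U \subseteq W \cup \sim W$; intersecting with $X_n$ and using that $\sim$ is the identity on the diagonal $X_n \cap \sim X_n$, this is in turn equivalent to $U \subseteq W$. Combining (a), (b) and (c) yields the theorem, with the main (indeed essentially the only) real work being the case analysis for join-closure in step (a).
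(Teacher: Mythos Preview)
Your proposal is correct and follows exactly the route the paper intends: the paper's own ``proof'' is the single sentence ``From this definition and the previous results, the following Theorem is immediate,'' so you are supplying precisely the details the paper omits. Your decomposition into (a) each $K_n^U$ is a $\sim$-closed sublattice containing $K_n^\emptyset$, (b) every such sublattice arises this way, and (c) the order-isomorphism, is the natural unpacking, and the verifications are sound; in particular your observation that the argument $p$ realizing the minimum second coordinate must lie in $S\times S$ (hence in $U\cup\sim U$) is the key point that makes the join-closure case analysis go through cleanly.
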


We also note:

\begin{corollary} The lattice of admissible subalgebras of $K(\alg {DP}_n)$ is a chain if and only if $n \le 4$.
\end{corollary}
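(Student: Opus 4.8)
The plan is to read the corollary off the preceding Theorem together with one elementary fact about posets. For $n\geq 4$, that Theorem says precisely that $U\longmapsto\alg K_n^U$ is a bijection from the set of up-sets of the poset $X_n$ onto the set of admissible subalgebras of $K(\alg{DP}_n)$, and that it respects inclusion in both directions; hence it is an order isomorphism from the up-set lattice $\mathcal O(X_n)$ (ordered by $\sse$) onto the lattice of admissible subalgebras of $K(\alg{DP}_n)$ (ordered by $\sse$). Since being a chain is a property of the underlying order, and since $\mathcal O(P)$ is a chain if and only if $P$ is a chain --- if $x,y\in P$ are incomparable then $\upa x\not\sse\upa y$ and $\upa y\not\sse\upa x$, while for a chain every up-set is empty or principal and the principal ones are linearly ordered --- the whole question for $n\geq 4$ reduces to deciding when $X_n$ is a chain. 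The cases $n\leq 3$ are not covered by the Theorem and must be dealt with by hand.

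For the ``if'' direction the small cases are immediate. $\alg{DP}_1$ is trivial; $\alg{DP}_2\cong\alg 2$, and $K(\alg 2)=\alg K_4$ has exactly the two admissible subalgebras $\alg K_3\subset\alg K_4$; and $\alg{DP}_3\cong\alg\L_2$ is a Wajsberg chain, so by the description of the $\alg K_{m,n}$ in Section~\ref{mv} (the lattice filters of a chain form a chain) the admissible subalgebras of $K(\alg\L_2)$ form the chain $\alg K_{0,2}\subset\alg K_{1,2}\subset\alg K_{2,2}=K(\alg\L_2)$. For $n=4$, from $DP_4=\{0=a_3\prec a_2\prec a_1\prec 1\}$ one reads off $X_4=\{(a_2,a_2),(a_3,a_2),(a_3,a_3)\}$, which under the coordinatewise order is the three-element chain $(a_3,a_3)<(a_3,a_2)<(a_2,a_2)$; hence $\mathcal O(X_4)$, and with it the lattice of admissible subalgebras of $K(\alg{DP}_4)$, is a four-element chain.

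For the ``only if'' direction it remains to exhibit, for $n\geq 5$, two incomparable elements of $X_n$. Take $(a_3,a_3)$ and $(a_4,a_2)$: both lie in $X_n$ since $3\geq 3>1$, $4\geq 2>1$, and $n-1\geq 4$. Recalling that $a_4\prec a_3\prec a_2$ in $\alg{DP}_n$, so $a_4<a_3<a_2$, we have neither $a_3\le a_4$ nor $a_2\le a_3$, whence $(a_3,a_3)$ and $(a_4,a_2)$ are incomparable in $X_n$. Therefore $\upa(a_3,a_3)$ and $\upa(a_4,a_2)$ are incomparable up-sets, $\mathcal O(X_n)$ is not a chain, and neither is the lattice of admissible subalgebras of $K(\alg{DP}_n)$.

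I do not expect any real obstacle: all the substance has already been packed into the Theorem and into the combinatorics of $X_n$. The two points that need care are the reversed index convention (a larger subscript names a smaller element of $\alg{DP}_n$), which is exactly what makes the pair $(a_3,a_3)$, $(a_4,a_2)$ incomparable, and the fact that $n\leq 3$ must be argued separately since the Theorem presupposes $n\geq 4$.
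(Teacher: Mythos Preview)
Your proof is correct and follows exactly the line the paper intends: the Corollary is stated without proof immediately after the Theorem, the paper having already remarked that the chain property ``will be clear after our description of the lattice of admissible subalgebras of $K(\alg{DP}_n)$.'' You have simply made that description explicit by identifying the admissible-subalgebra lattice with $\mathcal O(X_n)$, checking that $X_4$ is a three-element chain while $X_n$ for $n\ge 5$ contains the incomparable pair $(a_3,a_3),(a_4,a_2)$, and disposing of $n\le 3$ via the identifications $\alg{DP}_2\cong\alg 2$, $\alg{DP}_3\cong\alg\L_2$ already noted in the paper.
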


Figures \ref{d4} and \ref{d5} show all the admissible subalgebras of $K(\alg {DP}_4)$ and $K(\alg {DP}_5)$.

\medskip

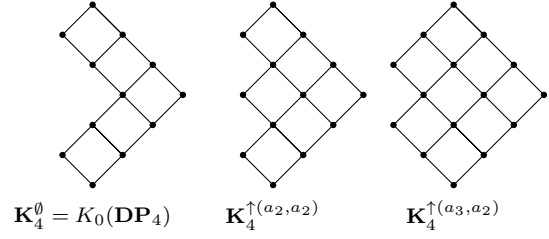
\begin{figure}[htbp]
\begin{center}
\begin{tikzpicture}[scale=.4]
\draw (6,2) -- (7,3) -- (8,4) -- (9,5) -- (8,6) -- (7,5) -- (8,4);
\draw (8,6) -- (7,7) -- (6,8);
\draw  (6,2) -- (5,3) -- (6,4) -- (7,3) -- (6,4) --(7,5) -- (6,6) -- (5,7) -- (6,8) -- (7,7) -- (6,6);
\draw[fill] (5,3) circle (2.5pt);
\draw[fill] (5,7) circle (2.5pt);
\draw[fill] (6,2) circle (2.5pt);
\draw[fill] (6,4) circle (2.5pt);
\draw[fill] (6,6) circle (2.5pt);
\draw[fill] (6,8) circle (2.5pt);
\draw[fill] (7,3) circle (2.5pt);
\draw[fill] (7,5) circle (2.5pt);
\draw[fill] (7,7) circle (2.5pt);
\draw[fill] (8,4) circle (2.5pt);
\draw[fill] (8,6) circle (2.5pt);
\draw[fill] (9,5) circle (2.5pt);
\node at (6,1) {\footnotesize $\alg K_4^\emptyset= K_0(\alg {DP}_4)$};
\draw (12,2) -- (13,3) -- (14,4) -- (15,5) -- (14,6) -- (13,5) -- (14,4);
\draw (14,6) -- (13,7) -- (12,8);
\draw  (12,2) -- (11,3) -- (12,4) -- (13,3) -- (12,4) --(13,5) -- (12,6) -- (11,7) -- (12,8) -- (13,7) -- (12,6);
\draw  (12,4) --(11,5) -- (12,6);
\draw[fill] (11,3) circle (2.5pt);
\draw[fill] (11,5) circle (2.5pt);
\draw[fill] (11,7) circle (2.5pt);
\draw[fill] (12,2) circle (2.5pt);
\draw[fill] (12,4) circle (2.5pt);
\draw[fill] (12,6) circle (2.5pt);
\draw[fill] (12,8) circle (2.5pt);
\draw[fill] (13,3) circle (2.5pt);
\draw[fill] (13,5) circle (2.5pt);
\draw[fill] (13,7) circle (2.5pt);
\draw[fill] (14,4) circle (2.5pt);
\draw[fill] (14,6) circle (2.5pt);
\draw[fill] (15,5) circle (2.5pt);
\node at (12,1) {\footnotesize $\alg K_4^{\uparrow(a_2,a_2)}$};
\draw (18,2) -- (19,3) -- (20,4) -- (21,5) -- (20,6) -- (19,5) -- (20,4);
\draw (20,6) -- (19,7) -- (18,8);
\draw  (18,2) -- (17,3) -- (18,4) -- (19,3) -- (18,4) --(19,5) -- (18,6) -- (17,7) -- (18,8) -- (19,7) -- (18,6);
\draw  (18,4) --(17,5) -- (18,6);
\draw  (17,3) -- (16,4) -- (17,5) -- (16,6) -- (17,7);
\draw[fill] (16,4) circle (2.5pt);
\draw[fill] (16,6) circle (2.5pt);
\draw[fill] (17,3) circle (2.5pt);
\draw[fill] (17,5) circle (2.5pt);
\draw[fill] (17,7) circle (2.5pt);
\draw[fill] (18,2) circle (2.5pt);
\draw[fill] (18,4) circle (2.5pt);
\draw[fill] (18,6) circle (2.5pt);
\draw[fill] (18,8) circle (2.5pt);
\draw[fill] (19,3) circle (2.5pt);
\draw[fill] (19,5) circle (2.5pt);
\draw[fill] (19,7) circle (2.5pt);
\draw[fill] (20,4) circle (2.5pt);
\draw[fill] (20,6) circle (2.5pt);
\draw[fill] (21,5) circle (2.5pt);
\node at (18,1) {\footnotesize $\alg K_4^{\uparrow(a_3,a_2)}$};
\end{tikzpicture}
\end{center}
\caption{The proper admissible subalgebras of $K(\alg{DP}_4)$\label{d4}}
\end{figure}

\begin{figure}[htbp]
\begin{center}
\begin{tikzpicture}[scale=.4]
\draw (6.,-6.)-- (7.,-5.);
\draw (7.,-5.)-- (8.,-4.);
\draw (8.,-4.)-- (9.,-3.);
\draw (9.,-3.)-- (10.,-2.);
\draw (10.,-2.)-- (9.,-1.);
\draw (9.,-1.)-- (8.,0.);
\draw (8.,0.)-- (7.,1.);
\draw (7.,1.)-- (6.,2.);
\draw (6.,-6.)-- (5.,-5.);
\draw (5.,-5.)-- (6.,-4.);
\draw (6.,-4.)-- (7.,-3.);
\draw (7.,-3.)-- (8.,-2.);
\draw (8.,-2.)-- (9.,-1.);
\draw (8.,-2.)-- (7.,-1.);
\draw (7.,-1.)-- (6.,0.);
\draw (6.,0.)-- (5.,1.);
\draw (5.,1.)-- (6.,2.);
\draw (6.,0.)-- (7.,1.);
\draw (7.,-1.)-- (8.,0.);
\draw (8.,-2.)-- (9.,-3.);
\draw (7.,-3.)-- (8.,-4.);
\draw (6.,-4.)-- (7.,-5.);
\draw (16.,2.)-- (20.,-2.);
\draw (20.,-2.)-- (16.,-6.);
\draw (26.,2.)-- (30.,-2.);
\draw (30.,-2.)-- (26.,-6.);
\draw (6.,-16.)-- (10.,-12.);
\draw (10.,-12.)-- (6.,-8.);
\draw (16.,-8.)-- (20.,-12.);
\draw (16.,-16.)-- (20.,-12.);
\draw (26.,-8.)-- (30.,-12.);
\draw (30.,-12.)-- (26.,-16.);
\draw (24.,-14.)-- (28.,-10.);
\draw (25.,-15.)-- (29.,-11.);
\draw (24.,-10.)-- (28.,-14.);
\draw (29.,-13.)-- (25.,-9.);
\draw (24.,-10.)-- (26.,-8.);
\draw (24.,-12.)-- (27.,-9.);
\draw (24.,-12.)-- (27.,-15.);
\draw (24.,-14.)-- (26.,-16.);
\draw (16.,-16.)-- (15.,-15.);
\draw (15.,-15.)-- (19.,-11.);
\draw (16.,-8.)-- (15.,-9.);
\draw (15.,-9.)-- (19.,-13.);
\draw (16.,-10.)-- (17.,-9.);
\draw (17.,-11.)-- (18.,-10.);
\draw (17.,-13.)-- (18.,-14.);
\draw (17.,-15.)-- (16.,-14.);
\draw (18.,-2.)-- (19.,-3.);
\draw (18.,-2.)-- (19.,-1.);
\draw (16.,-2.)-- (18.,-4.);
\draw (16.,-2.)-- (18.,0.);
\draw (18.,-2.)-- (15.,-5.);
\draw (16.,-6.)-- (15.,-5.);
\draw (17.,-5.)-- (16.,-4.);
\draw (18.,-2.)-- (15.,1.);
\draw (15.,1.)-- (16.,2.);
\draw (16.,0.)-- (17.,1.);
\draw (26.,-6.)-- (25.,-5.);
\draw (27.,-5.)-- (25.,-3.);
\draw (25.,-3.)-- (28.,0.);
\draw (28.,-4.)-- (26.,-2.);
\draw (29.,-3.)-- (27.,-1.);
\draw (29.,-1.)-- (26.,-4.);
\draw (26.,2.)-- (25.,1.);
\draw (25.,1.)-- (27.,-1.);
\draw (26.,0.)-- (27.,1.);
\draw (25.,-5.)-- (26.,-4.);
\draw (26.,-2.)-- (25.,-1.);
\draw (25.,-1.)-- (26.,0.);
\draw (6.,-16.)-- (5.,-15.);
\draw (5.,-15.)-- (9.,-11.);
\draw (6.,-8.)-- (5.,-9.);
\draw (5.,-9.)-- (9.,-13.);
\draw (7.,-15.)-- (6.,-14.);
\draw (8.,-14.)-- (7.,-13.);
\draw (7.,-11.)-- (8.,-10.);
\draw (6.,-10.)-- (7.,-9.);
\draw (5.,-9.)-- (4.,-10.);
\draw (4.,-10.)-- (7.,-13.);
\draw (7.,-11.)-- (4.,-14.);
\draw (4.,-14.)-- (5.,-15.);
\draw (6.,-10.)-- (5.,-11.);
\draw (6.,-14.)-- (5.,-13.);
\draw (16.,-10.)-- (14.,-12.);
\draw (14.,-12.)-- (16.,-14.);
\draw (17.,-11.)-- (16.,-12.);
\draw (16.,-12.)-- (15.,-11.);
\draw (16.,-12.)-- (17.,-13.);
\draw (16.,-12.)-- (15.,-13.);
\draw [fill] (6.,-6.) circle (2.5pt);
\node[below] at (6.,-6.5) {\footnotesize $\alg K_5^\emptyset = K_0(\alg {DP}_5)$};
\draw [fill] (7.,-5.) circle (2.5pt);
\draw [fill] (8.,-4.) circle (2.5pt);
\draw [fill] (9.,-3.) circle (2.5pt);
\draw [fill] (10.,-2.) circle (2.5pt);
\draw [fill] (9.,-1.) circle (2.5pt);
\draw [fill] (8.,0.) circle (2.5pt);
\draw [fill] (7.,1.) circle (2.5pt);
\draw [fill] (6.,2.) circle (2.5pt);
\draw [fill] (5.,-5.) circle (2.5pt);
\draw [fill] (6.,-4.) circle (2.5pt);
\draw [fill] (7.,-3.) circle (2.5pt);
\draw [fill] (8.,-2.) circle (2.5pt);
\draw [fill] (7.,-1.) circle (2.5pt);
\draw [fill] (6.,0.) circle (2.5pt);
\draw [fill] (5.,1.) circle (2.5pt);
\draw [fill] (16.,2.) circle (2.5pt);
\draw [fill] (20.,-2.) circle (2.5pt);
\draw [fill] (16.,-6.) circle (2.5pt);
\node[below] at (16.,-6.5) {\footnotesize $\alg K_5^{\uparrow(a_2,a_2)}$};
\draw [fill] (26.,2.) circle (2.5pt);
\draw [fill] (30.,-2.) circle (2.5pt);
\draw [fill] (26.,-6.) circle (2.5pt);
\node[below] at (26.,-6.5) {\footnotesize $\alg K_5^{\uparrow(a_3,a_2)}$};
\draw [fill] (6.,-8.) circle (2.5pt);
\draw [fill] (6.,-16.) circle (2.5pt);
\node[below] at (6.,-16.5) {\footnotesize $\alg K_5^{\uparrow(a_4,a_2)}$};
\draw [fill] (10.,-12.) circle (2.5pt);
\draw [fill] (16.,-8.) circle (2.5pt);
\draw [fill] (20.,-12.) circle (2.5pt);
\draw [fill] (16.,-16.) circle (2.5pt);
\node[below] at (16.,-16.5) {\footnotesize $\alg K_5^{\uparrow(a_3,a_3)}$};
\draw [fill] (26.,-8.) circle (2.5pt);
\draw [fill] (30.,-12.) circle (2.5pt);
\draw [fill] (26.,-16.) circle (2.5pt);
\node[below] at (26.,-16.5) {\footnotesize $\alg K_5^{\uparrow(a_4,a_2),(a_3,a_3)}$};
\draw [fill] (25.,-15.) circle (2.5pt);
\draw [fill] (26.,-14.) circle (2.5pt);
\draw [fill] (27.,-13.) circle (2.5pt);
\draw [fill] (28.,-12.) circle (2.5pt);
\draw [fill] (29.,-13.) circle (2.5pt);
\draw [fill] (28.,-14.) circle (2.5pt);
\draw [fill] (27.,-15.) circle (2.5pt);
\draw [fill] (24.,-14.) circle (2.5pt);
\draw [fill] (24.,-12.) circle (2.5pt);
\draw [fill] (25.,-13.) circle (2.5pt);
\draw [fill] (25.,-11.) circle (2.5pt);
\draw [fill] (26.,-12.) circle (2.5pt);
\draw [fill] (26.,-10.) circle (2.5pt);
\draw [fill] (27.,-11.) circle (2.5pt);
\draw [fill] (25.,-9.) circle (2.5pt);
\draw [fill] (24.,-10.) circle (2.5pt);
\draw [fill] (27.,-9.) circle (2.5pt);
\draw [fill] (28.,-10.) circle (2.5pt);
\draw [fill] (29.,-11.) circle (2.5pt);
\draw [fill] (15.,-15.) circle (2.5pt);
\draw [fill] (19.,-11.) circle (2.5pt);
\draw [fill] (15.,-9.) circle (2.5pt);
\draw [fill] (19.,-13.) circle (2.5pt);
\draw [fill] (17.,-9.) circle (2.5pt);
\draw [fill] (18.,-10.) circle (2.5pt);
\draw [fill] (17.,-15.) circle (2.5pt);
\draw [fill] (18.,-14.) circle (2.5pt);
\draw [fill] (16.,-14.) circle (2.5pt);
\draw [fill] (17.,-13.) circle (2.5pt);
\draw [fill] (16.,-10.) circle (2.5pt);
\draw [fill] (17.,-11.) circle (2.5pt);
\draw [fill] (18.,-12.) circle (2.0pt);
\draw [fill] (18.,-2.) circle (2.5pt);
\draw [fill] (19.,-3.) circle (2.5pt);
\draw [fill] (19.,-1.) circle (2.5pt);
\draw [fill] (16.,-2.) circle (2.5pt);
\draw [fill] (18.,-4.) circle (2.5pt);
\draw [fill] (18.,0.) circle (2.5pt);
\draw [fill] (15.,-5.) circle (2.5pt);
\draw [fill] (17.,-5.) circle (2.5pt);
\draw [fill] (16.,-4.) circle (2.5pt);
\draw [fill] (15.,1.) circle (2.5pt);
\draw [fill] (16.,0.) circle (2.5pt);
\draw [fill] (17.,1.) circle (2.5pt);
\draw [fill] (25.,-5.) circle (2.5pt);
\draw [fill] (27.,-5.) circle (2.5pt);
\draw [fill] (25.,-3.) circle (2.5pt);
\draw [fill] (28.,0.) circle (2.5pt);
\draw [fill] (28.,-4.) circle (2.5pt);
\draw [fill] (26.,-2.) circle (2.5pt);
\draw [fill] (29.,-3.) circle (2.5pt);
\draw [fill] (27.,-1.) circle (2.5pt);
\draw [fill] (29.,-1.) circle (2.5pt);
\draw [fill] (26.,-4.) circle (2.5pt);
\draw [fill] (25.,1.) circle (2.5pt);
\draw [fill] (26.,0.) circle (2.5pt);
\draw [fill] (27.,1.) circle (2.5pt);
\draw [fill] (27.,-3.) circle (2.0pt);
\draw [fill] (28.,-2.) circle (2.5pt);
\draw [fill] (17.,-3.) circle (2.0pt);
\draw [fill] (17.,-1.) circle (2.5pt);
\draw [fill] (25.,-1.) circle (2.5pt);
\draw [fill] (5.,-15.) circle (2.5pt);
\draw [fill] (9.,-11.) circle (2.5pt);
\draw [fill] (5.,-9.) circle (2.5pt);
\draw [fill] (9.,-13.) circle (2.5pt);
\draw [fill] (7.,-15.) circle (2.5pt);
\draw [fill] (6.,-14.) circle (2.5pt);
\draw [fill] (8.,-14.) circle (2.5pt);
\draw [fill] (7.,-13.) circle (2.5pt);
\draw [fill] (7.,-11.) circle (2.5pt);
\draw [fill] (8.,-10.) circle (2.5pt);
\draw [fill] (6.,-10.) circle (2.5pt);
\draw [fill] (7.,-9.) circle (2.5pt);
\draw [fill] (4.,-10.) circle (2.5pt);
\draw [fill] (4.,-14.) circle (2.5pt);
\draw [fill] (5.,-11.) circle (2.5pt);
\draw [fill] (5.,-13.) circle (2.5pt);
\draw [fill] (14.,-12.) circle (2.5pt);
\draw [fill] (16.,-12.) circle (2.5pt);
\draw [fill] (15.,-11.) circle (2.5pt);
\draw [fill] (15.,-13.) circle (2.5pt);
\draw [fill] (6.,-12.) circle (2.0pt);
\draw [fill] (8.,-12.) circle (2.0pt);
\draw (16.,-18.)-- (20.,-22.);
\draw (16.,-18.)-- (13.,-21.);
\draw (13.,-23.)-- (17.,-19.);
\draw (14.,-24.)-- (18.,-20.);
\draw (13.,-23.)-- (16.,-26.);
\draw (16.,-26.)-- (20.,-22.);
\draw (19.,-21.)-- (15.,-25.);
\draw (13.,-21.)-- (17.,-25.);
\draw (18.,-24.)-- (14.,-20.);
\draw (15.,-19.)-- (19.,-23.);
\draw [fill] (16.,-26.) circle (2.5pt);
\node[below] at (16.,-26.) {\footnotesize $\alg K_5^{\uparrow(a_4,a_3)}$};
\draw [fill] (18.,-24.) circle (2.5pt);
\draw [fill] (20.,-22.) circle (2.5pt);
\draw [fill] (17.,-19.) circle (2.5pt);
\draw [fill] (15.,-19.) circle (2.5pt);
\draw [fill] (19.,-21.) circle (2.5pt);
\draw [fill] (19.,-23.) circle (2.5pt);
\draw [fill] (18.,-22.) circle (2.5pt);
\draw [fill] (17.,-21.) circle (2.5pt);
\draw [fill] (16.,-20.) circle (2.5pt);
\draw [fill] (17.,-23.) circle (2.5pt);
\draw [fill] (16.,-24.) circle (2.5pt);
\draw [fill] (15.,-25.) circle (2.5pt);
\draw [fill] (17.,-25.) circle (2.5pt);
\draw [fill] (15.,-23.) circle (2.5pt);
\draw [fill] (16.,-22.) circle (2.5pt);
\draw [fill] (15.,-21.) circle (2.5pt);
\draw [fill] (14.,-22.) circle (2.5pt);
\draw [fill] (13.,-21.) circle (2.5pt);
\draw [fill] (13.,-23.) circle (2.5pt);
\end{tikzpicture}
\end{center}
\caption{The proper admissible subalgebras of $K(\alg{DP}_5)$\label{d5}}
\end{figure}
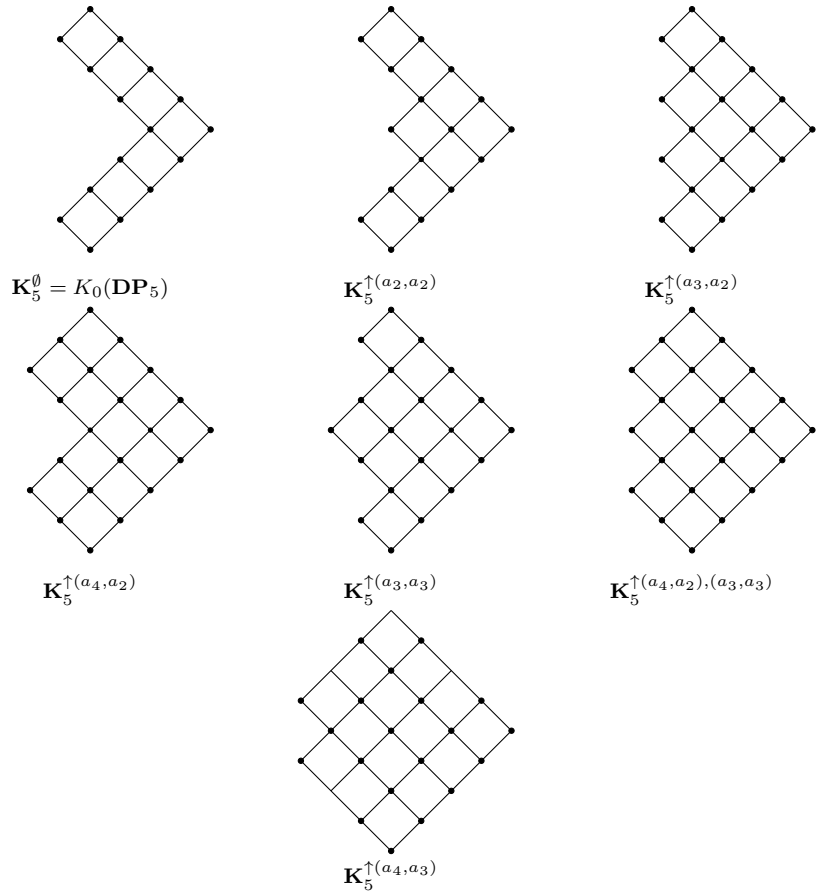

Since the non admissible subalgebras of $K(\alg {DP}_n)$ are exactly the admissible subalgebras of $K(\alg {DP}_m)$ with $m \le n$, in principle we can draw the entire lattice of subalgebras. For that we use the following result, in which we use that in the inclusion $\alg {DP}_m\leq \alg {DP}_n$, the coatom $a_{1}^{(m)}$ goes to the coatom $a_{1}^{(n)}$, each $a_{k}^{(m)}$ for $k=2,\ldots,m-2$ goes to $a_{k}^{(n)}$, and $a_{m-1}^{(m)}=0$ goes to $a_{n-1}^{(n)}=0$.

\begin{theorem}
\begin{enumerate}
	\item for each $4\leq n$, $\alg K_{0,2}\leq \alg K_{1,2}\leq \alg K_n^{\emptyset}$;
	\item for each $4\leq m<n$, $\alg K_m^U\leq \alg K_n^{\tilde{U}}$, where $\tilde{U}$ is the up-set of $X_n$ generated by each $(a_i,a_j)\in U$ with $i,j<m-1$, each $(a_{n-1},a_j)$ if $(a_{m-1},a_j)\in U$ and $j<m-1$, and $(a_{n-1},a_{n-1})$ if $(a_{m-1},a_{m-1})\in U$.
\end{enumerate}
\end{theorem}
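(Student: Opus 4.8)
The plan is to obtain both inclusions by applying the operator $K$ to the canonical embeddings of DP-chains: $\alg{DP}_3=\alg\L_2\leq\alg{DP}_n$ for part (1) and $\alg{DP}_m\leq\alg{DP}_n$ for part (2). The observation that makes this work is that $K$ is functorial: every operation of $K(\alg A)$ is given by a fixed term in the operations of $\alg A$ evaluated at the two coordinates, so an embedding $h\colon\alg A\to\alg B$ in $\mathsf{BCRL}$ lifts to an embedding $K(h)\colon(a,b)\mapsto(h(a),h(b))$ of $K(\alg A)$ into $K(\alg B)$. Consequently, whenever $\alg S\leq K(\alg A)$, $\alg T\leq K(\alg B)$ and $K(h)$ carries the universe of $\alg S$ into the universe of $\alg T$, the restriction of $K(h)$ is an embedding of $\alg S$ into $\alg T$, i.e. $\alg S\leq\alg T$. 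So the whole argument reduces to computing the $K$-image of a small, explicitly given universe and checking membership in the target.

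For part (1) I would first note that $\alg K_{0,2}=K_0(\alg\L_2)$, the least admissible subalgebra of $K(\alg\L_2)$, is contained in the admissible subalgebra $\alg K_{1,2}=K(\alg\L_2,\langle a\rangle)$ (admissible by Theorem \ref{kalmanwajsberg}); hence $\alg K_{0,2}\leq\alg K_{1,2}$, and it remains to show $\alg K_{1,2}\leq\alg K_n^{\emptyset}$. Here $\alg K_{1,2}=\{(u,v)\in K(\alg\L_2):u\oplus v\geq a\}=K(\alg\L_2)\setminus\{(0,0)\}$, and the embedding $\alg\L_2=\alg{DP}_3\hookrightarrow\alg{DP}_n$ sends $0\mapsto 0$, $a\mapsto a_1$, $1\mapsto 1$. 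Applying $K$ to this embedding yields an embedding of $K(\alg\L_2)$ into $K(\alg{DP}_n)$ that carries $\alg K_{1,2}$ onto $(\{0,a_1,1\}\times\{0,a_1,1\})\setminus\{(0,0)\}$, and each of these eight pairs is visibly of one of the shapes $(x,1)$, $(1,x)$, $(x,a_1)$, $(a_1,x)$ with $x\in DP_n$, hence lies in $K_n^{\emptyset}$. This gives $\alg K_{1,2}\leq\alg K_n^{\emptyset}$ and finishes part (1).

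For part (2) let $g\colon\alg{DP}_m\to\alg{DP}_n$ be the embedding described just before the statement, so that $g(a_1)=a_1$, $g(a_k)=a_k$ for $2\leq k\leq m-2$, and $g(a_{m-1})=g(0)=0=a_{n-1}$. First $K(g)$ maps $K_m^{\emptyset}=\{(x,1),(1,x),(x,a_1),(a_1,x):x\in DP_m\}$ into $K_n^{\emptyset}$, since $g(1)=1$, $g(a_1)=a_1$ and $g(x)\in DP_n$. Then, for the remaining elements $\{(x,y):(x,y)\in U\text{ or }(y,x)\in U\}$ of $K_m^U$, I would take $(x,y)=(a_i,a_j)\in U$ with $i\geq j>1$ and compute its coordinatewise $g$-image: it equals $(a_i,a_j)$ if $i,j<m-1$, equals $(a_{n-1},a_j)$ if $i=m-1$ and $j<m-1$, and equals $(a_{n-1},a_{n-1})$ if $i=j=m-1$ — precisely the three families of generators prescribed for $\tilde U$. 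So $K(g)$ sends every element of $U$ into $\tilde U$, and sends each reflected pair $(y,x)$ (with $(x,y)\in U$) to the reflection of an element of $\tilde U$; either way the image lies in $K_n^{\tilde U}=K_n^{\emptyset}\cup\{(u,v):(u,v)\in\tilde U\text{ or }(v,u)\in\tilde U\}$. Combining the two cases, $K(g)$ maps the universe of $\alg K_m^U$ into that of $\alg K_n^{\tilde U}$, and since $\alg K_m^U\leq K(\alg{DP}_m)$, $\alg K_n^{\tilde U}\leq K(\alg{DP}_n)$ and $K(g)$ is an embedding, we conclude $\alg K_m^U\leq\alg K_n^{\tilde U}$.

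Apart from the routine checking of the coordinatewise formulas for the operations, the one point that needs care — and the main obstacle — is the bookkeeping in part (2): verifying that the three clauses defining $\tilde U$ exhaustively account for the $g$-images of all elements of $U$ (split by whether both indices are $<m-1$, exactly one equals $m-1$, or both equal $m-1$, recalling that in $X_m$ the indices range over $\{2,\dots,m-1\}$); verifying that each resulting pair indeed belongs to $X_n$, which uses $n>m$ so that the index $n-1$ of $0$ in $DP_n$ exceeds $1$, together with $g$ being order-preserving so that $i\geq j$ survives; and noting that $\tilde U$, being defined as the up-set \emph{generated} by those pairs, is automatically a legitimate up-set of $X_n$, so that $\alg K_n^{\tilde U}$ makes sense. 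With these in hand the inclusion follows at once from the functoriality of $K$.
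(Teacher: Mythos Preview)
Your proposal is correct and follows exactly the approach the paper intends: the paper states the theorem without proof, but the paragraph immediately preceding it prescribes the embedding $\alg{DP}_m\hookrightarrow\alg{DP}_n$ (coatom to coatom, $a_k\mapsto a_k$ for $2\le k\le m-2$, bottom to bottom), and your argument is precisely the verification that applying $K$ to this embedding carries $\alg K_m^U$ into $\alg K_n^{\tilde U}$. Your case analysis for the generators of $\tilde U$ matches the three clauses in the statement, and your handling of part~(1) via $\alg\L_2=\alg{DP}_3$ is the natural specialization.
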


From here, and using the fact that none of $\alg K_n^U$ has proper non-trivial congruences, using the same techniques that we have employed throughout this paper, we can describe $\Lambda(K(\alg{DP}_n))$.

The lattices $\Lambda(K(\alg{DP}_4))$ and $\Lambda(K(\alg{DP}_5))$ are in Figures \ref{dp4} and \ref{dp5}, respectively; we have used the same names before for the admissible subalgebras of $K(\alg{DP}_3) = K(\alg \L_2)$. Unfortunately $\Lambda(K(\alg{DP}_n))$ becomes very intricate as soon as $n>4$, and it is borderline impossible to draw it by hands.

\begin{figure}[htbp]
\begin{center}
\begin{tikzpicture}[scale=.5]
\draw (0,0) -- (0,1) -- (1,1.5) -- (2,2) -- (3,2.5) -- (4,3) -- (5,3.5) -- (5,4.5);
\draw (0,1) -- (0,2) -- (1,2.5) -- (2,3) -- (3,3.5) -- (4,4) -- (5,4.5) -- (5,5.5);
\draw (2,3) -- (2,4) -- (3,4.5) -- (4,5) -- (5,5.5) -- (5,6.5);
\draw (1,1.5) -- (1,2.5);
\draw (2,2) -- (2,3);
\draw (3,2.5) -- (3,3.5) -- (3,4.5);
\draw (4,3) -- (4,4) -- (4,5);
\draw[fill] (0,0) circle [radius=0.05]; \node[below] at (0,0) {\tiny $\mathsf{T}$};
\draw[fill] (0,1) circle [radius=0.05]; \node[left] at (0,1) {\tiny $\VV(\alg K_3$)};
\draw[fill] (0,2) circle [radius=0.05]; \node[left] at (0,2) {\tiny $\VV(\alg K_4)$};
\draw[fill] (1,1.5) circle [radius=0.05]; \node[right] at (1,1.4) {\tiny $\VV(\alg K_{0,2}$)};
\draw[fill] (1,2.5) circle [radius=0.05];
\draw[fill] (2,2) circle [radius=0.05]; \node[right] at (2,1.9) {\tiny $\VV(\alg K_{1,2}$)};
\draw[fill] (2,3) circle [radius=0.05];
\draw[fill] (2,4) circle [radius=0.05]; \node[left] at (2,4) {\tiny $\VV(K(\alg \L_2) )= \VV(K_{2,2})$};
\draw[fill] (3,2.5) circle [radius=0.05]; \node[right] at (3,2.4) {\tiny $\VV(\alg K_4^\emptyset)$};
\draw[fill] (3,3.5) circle [radius=0.05];
\draw[fill] (3,4.5) circle [radius=0.05];
\draw[fill] (4,3) circle [radius=0.05]; \node[right] at (4,2.92) {\tiny $\VV(\alg K_4^{\uparrow(a_2,a_2)})$};
\draw[fill] (4,4) circle [radius=0.05];
\draw[fill] (4,5) circle [radius=0.05];
\draw[fill] (5,3.5) circle [radius=0.05]; \node[right] at (5,3.5)  {\tiny $\VV(\alg K_4^{\uparrow(a_3,a_2)})$};
\draw[fill] (5,4.5) circle [radius=0.05];
\draw[fill] (5,5.5) circle [radius=0.05];
\draw[fill] (5,6.5) circle [radius=0.05]; \node[right] at (5,6.5) {\tiny $\VV(K(\alg{DP}_4))= \VV(\alg K_4^{\uparrow(a_3,a_3)})$};
\end{tikzpicture}
\end{center}
\caption{The lattice  $\Lambda(K(\alg{DP}_4))$\label{dp4}}
\end{figure}

\begin{figure}[htbp]
\begin{center}
\begin{tikzpicture}[scale=.2]
\draw (18.,-22.)-- (18.,-20.);
\draw (18.,-20.)-- (22.,-18.);
\draw (18.,-20.)-- (18.,-18.);
\draw (22.,-18.)-- (22.,-16.);
\draw (18.,-18.)-- (22.,-16.);
\draw (22.,-16.)-- (26.,-14.);
\draw (26.,-14.)-- (26.,-16.);
\draw (26.,-16.)-- (22.,-18.);
\draw (26.,-14.)-- (26.,-12.);
\draw (26.,-16.)-- (24.,-14.);
\draw (24.,-14.)-- (24.,-12.);
\draw (24.,-12.)-- (24.,-10.);
\draw (26.,-14.)-- (24.,-12.);
\draw (26.,-12.)-- (24.,-10.);
\draw (24.,-14.)-- (22.,-12.);
\draw (22.,-12.)-- (20.,-10.);
\draw (24.,-12.)-- (22.,-10.);
\draw (22.,-12.)-- (22.,-10.);
\draw (22.,-10.)-- (20.,-8.);
\draw (20.,-10.)-- (20.,-8.);
\draw (24.,-10.)-- (22.,-8.);
\draw (22.,-10.)-- (22.,-8.);
\draw (22.,-8.)-- (20.,-6.);
\draw (20.,-8.)-- (20.,-6.);
\draw (20.,-6.)-- (20.,-4.);
\draw (28.,-10.)-- (28.,-12.);
\draw (28.,-10.)-- (28.,-8.);
\draw (24.,-14.)-- (28.,-12.);
\draw (24.,-12.)-- (28.,-10.);
\draw (24.,-10.)-- (28.,-8.);
\draw (22.,-12.)-- (26.,-10.);
\draw (28.,-12.)-- (26.,-10.);
\draw (26.,-10.)-- (26.,-8.);
\draw (26.,-8.)-- (26.,-6.);
\draw (26.,-6.)-- (22.,-8.);
\draw (26.,-8.)-- (22.,-10.);
\draw (26.,-8.)-- (28.,-10.);
\draw (26.,-6.)-- (28.,-8.);
\draw (26.,-10.)-- (30.,-8.);
\draw (30.,-6.)-- (26.,-8.);
\draw (30.,-6.)-- (30.,-4.);
\draw (30.,-4.)-- (26.,-6.);
\draw (30.,-8.)-- (30.,-6.);
\draw (20.,-10.)-- (24.,-8.);
\draw (24.,-8.)-- (26.,-10.);
\draw (20.,-8.)-- (24.,-6.);
\draw (24.,-6.)-- (26.,-8.);
\draw (24.,-8.)-- (24.,-6.);
\draw (24.,-6.)-- (24.,-4.);
\draw (24.,-4.)-- (20.,-6.);
\draw (24.,-4.)-- (26.,-6.);
\draw (24.,-2.)-- (24.,-4.);
\draw (24.,-2.)-- (20.,-4.);
\draw (30.,-8.)-- (28.,-6.);
\draw (28.,-6.)-- (24.,-8.);
\draw (24.,-6.)-- (28.,-4.);
\draw (28.,-6.)-- (28.,-4.);
\draw (24.,-4.)-- (28.,-2.);
\draw (28.,-4.)-- (28.,-2.);
\draw (28.,-2.)-- (28.,0.);
\draw (28.,0.)-- (24.,-2.);
\draw (30.,-6.)-- (28.,-4.);
\draw (30.,-4.)-- (28.,-2.);
\draw (30.,-8.)-- (34.,-6.);
\draw (28.,-6.)-- (32.,-4.);
\draw (32.,-4.)-- (34.,-6.);
\draw (34.,-6.)-- (34.,-4.);
\draw (34.,-4.)-- (34.,-2.);
\draw (32.,-4.)-- (32.,-2.);
\draw (32.,-2.)-- (32.,0.);
\draw (32.,0.)-- (28.,-2.);
\draw (32.,2.)-- (28.,0.);
\draw (32.,0.)-- (32.,2.);
\draw (32.,-2.)-- (28.,-4.);
\draw (30.,-4.)-- (34.,-2.);
\draw (30.,-6.)-- (34.,-4.);
\draw (32.,-2.)-- (34.,-4.);
\draw (32.,0.)-- (34.,-2.);
\draw (30.,-2.)-- (32.,-4.);
\draw (30.,-2.)-- (30.,0.);
\draw (30.,0.)-- (30.,2.);
\draw (30.,2.)-- (30.,4.);
\draw (30.,4.)-- (32.,2.);
\draw (32.,0.)-- (30.,2.);
\draw (32.,-2.)-- (30.,0.);
\draw (34.,-6.)-- (38.,-4.);
\draw (38.,-4.)-- (38.,-2.);
\draw (34.,-4.)-- (38.,-2.);
\draw (34.,-2.)-- (38.,0.);
\draw (38.,-2.)-- (38.,0.);
\draw (32.,-4.)-- (36.,-2.);
\draw (32.,-2.)-- (36.,0.);
\draw (32.,0.)-- (36.,2.);
\draw (30.,-2.)-- (34.,0.);
\draw (30.,0.)-- (34.,2.);
\draw (30.,2.)-- (34.,4.);
\draw (30.,4.)-- (34.,6.);
\draw (32.,2.)-- (36.,4.);
\draw (36.,-2.)-- (38.,-4.);
\draw (36.,0.)-- (36.,-2.);
\draw (36.,0.)-- (38.,-2.);
\draw (36.,0.)-- (36.,2.);
\draw (36.,2.)-- (38.,0.);
\draw (36.,-2.)-- (34.,0.);
\draw (34.,0.)-- (34.,2.);
\draw (34.,2.)-- (36.,0.);
\draw (36.,2.)-- (34.,4.);
\draw (34.,4.)-- (34.,2.);
\draw (34.,4.)-- (34.,6.);
\draw (36.,2.)-- (36.,4.);
\draw (36.,4.)-- (34.,6.);
\draw (34.,0.)-- (38.,2.);
\draw (34.,2.)-- (38.,4.);
\draw (34.,4.)-- (38.,6.);
\draw (34.,6.)-- (38.,8.);
\draw (38.,2.)-- (38.,4.);
\draw (38.,4.)-- (38.,6.);
\draw (38.,6.)-- (38.,8.);
\draw (38.,2.)-- (42.,4.);
\draw (38.,4.)-- (42.,6.);
\draw (38.,6.)-- (42.,8.);
\draw (38.,8.)-- (42.,10.);
\draw (42.,10.)-- (42.,8.);
\draw (42.,8.)-- (42.,6.);
\draw (42.,6.)-- (42.,4.);
\draw (42.,12.)-- (42.,10.);
\draw [fill=black] (18.,-22.) circle (5pt);
\node[below] at (18.,-22.) {\tiny $\mathsf{T}$};
\draw [fill=gray] (18.,-20.) circle (5pt);
\node[left] at (18.,-20.) {\tiny $\VV(\alg K_3)$};
\draw [fill=gray] (22.,-18.) circle (5pt);
\node[right] at (22.,-18.) {\tiny $\VV(K_{0}(\alg(L_{2}))) = \VV(\alg K_{0,2})$};
\draw [fill=gray] (18.,-18.) circle (5pt);
\node[left] at (18.,-18.) {\tiny $K(\mathsf{BA}=\VV(\alg K_4)$};
\draw [fill=black] (22.,-16.) circle (5pt);
\draw [fill=gray] (26.,-16.) circle (5pt);
\node[right] at (26.,-16.) {\tiny $\VV(\alg K_{1,2})$};
\draw [fill=black] (26.,-14.) circle (5pt);
\draw [fill=gray] (26.,-12.) circle (5pt);
\node[below right] at (26.,-12.) {\tiny $\VV(K(\alg(L_{2})))$};
\draw [fill=gray] (24.,-14.) circle (5pt);
\node[left] at (24.,-14.) {\tiny $K_{0}(\alg{DP}_{4})=\alg K_4^\emptyset$};
\draw [fill=black] (24.,-12.) circle (5pt);
\draw [fill=black] (24.,-10.) circle (5pt);
\draw [fill=gray] (22.,-12.) circle (5pt);
\node[left] at (22.,-12.) {\tiny $\VV(\alg K_4^{\uparrow(a_2,a_2)})$};
\draw [fill=gray] (20.,-10.) circle (5pt);
\node[left] at (20.,-10.) {\tiny $\VV(\alg K_4^{\uparrow(a_3,a_2)})$};
\draw [fill=black] (20.,-8.) circle (5pt);
\draw [fill=black] (20.,-6.) circle (5pt);
\draw [fill=gray] (20.,-4.) circle (5pt);
\node[above left] at (20.,-4.) {\tiny $\VV(K(\alg{DP}_{4}))$};
\draw [fill=black] (22.,-10.) circle (5pt);
\draw [fill=black] (22.,-8.) circle (5pt);
\draw [fill=gray] (28.,-12.) circle (5pt);
\node[above right] at (28.,-12.) {\tiny $K_{0}(\alg{DP}_{5})=\alg K_5^\emptyset$};
\draw [fill=black] (28.,-10.) circle (5pt);
\draw [fill=black] (28.,-8.) circle (5pt);
\draw [fill=black] (26.,-10.) circle (5pt);
\draw [fill=black] (26.,-8.) circle (5pt);
\draw [fill=black] (26.,-6.) circle (5pt);
\draw [fill=gray] (30.,-8.) circle (5pt);
\node[right] at (30.,-8.) {\tiny $\VV(\alg K_5^{\uparrow(a_2,a_2)})$};
\draw [fill=black] (30.,-6.) circle (5pt);
\draw [fill=black] (30.,-4.) circle (5pt);
\draw [fill=black] (24.,-8.) circle (5pt);
\draw [fill=black] (24.,-6.) circle (5pt);
\draw [fill=black] (24.,-4.) circle (5pt);
\draw [fill=black] (24.,-2.) circle (5pt);
\draw [fill=black] (28.,-6.) circle (5pt);
\draw [fill=black] (28.,-2.) circle (5pt);
\draw [fill=black] (28.,0.) circle (5pt);
\draw [fill=black] (28.,-4.) circle (5pt);
\draw [fill=black] (34.,-4.) circle (5pt);
\draw [fill=gray] (34.,-6.) circle (5pt);
\node[right] at (34.,-6.) {\tiny $\VV(\alg K_5^{\uparrow(a_3,a_2)})$};
\draw [fill=black] (32.,-4.) circle (5pt);
\draw [fill=black] (34.,-2.) circle (5pt);
\draw [fill=black] (32.,0.) circle (5pt);
\draw [fill=black] (32.,2.) circle (5pt);
\draw [fill=black] (32.,-2.) circle (5pt);
\draw [fill=gray] (30.,-2.) circle (5pt);
\node[above left] at (30.,-2.) {\tiny $\VV(\alg K_5^{\uparrow(a_4,a_2)})$};
\draw [fill=black] (30.,0.) circle (5pt);
\draw [fill=black] (30.,2.) circle (5pt);
\draw [fill=black] (30.,4.) circle (5pt);
\draw [fill=gray] (38.,-4.) circle (5pt);
\node[right] at (38.,-4.) {\tiny $\VV(\alg K_5^{\uparrow(a_3,a_3)})$};
\draw [fill=black] (38.,-2.) circle (5pt);
\draw [fill=black] (38.,0.) circle (5pt);
\draw [fill=black] (36.,-2.) circle (5pt);
\draw [fill=black] (36.,0.) circle (5pt);
\draw [fill=black] (36.,2.) circle (5pt);
\draw [fill=black] (34.,0.) circle (5pt);
\draw [fill=black] (34.,2.) circle (5pt);
\draw [fill=black] (34.,4.) circle (5pt);
\draw [fill=black] (34.,6.) circle (5pt);
\draw [fill=black] (36.,4.) circle (5pt);
\draw [fill=gray] (38.,2.) circle (5pt);
\node[right] at (38.,2.) {\tiny $\VV(\alg K_5^{\uparrow(a_4,a_2),(a_3,a_3)})$};
\draw [fill=black] (38.,4.) circle (5pt);
\draw [fill=black] (38.,6.) circle (5pt);
\draw [fill=black] (38.,8.) circle (5pt);
\draw [fill=gray] (42.,12.) circle (5pt);
\node[above] at (42.,12.) {\tiny $\VV(K(\alg{DP}_{5}))$};
\draw [fill=gray] (42.,4.) circle (5pt);
\node[right] at (42.,4.) {\tiny $\VV(\alg K_5^{\uparrow(a_4,a_3)})$};
\draw [fill=black] (42.,6.) circle (5pt);
\draw [fill=black] (42.,8.) circle (5pt);
\draw [fill=black] (42.,10.) circle (5pt);
\end{tikzpicture}
\end{center}
\caption{The lattice  $\Lambda(K(\alg{DP}_5))$\label{dp5}}
\end{figure}

\subsection{Stonean K-lattices}\label{Stone}

\textbf{Stonean residuated lattices} are bounded residuated lattices satisfying
\begin{align*} \neg x\vee \neg\neg x \app 1.\end{align*}

We call the variety of Stonean residuated lattices $\mathsf{SRL}$.
Here we list some properties of Stonean residuated lattices that will be useful in what follows; they can be found in \cite{BusanicheCignoliMarcos2019}

\begin{itemize}
	\item Stonean residuated lattices are pseudocomplemented, i.e. they satisfy $x\wedge \neg x = 0$.
	\item They satisfy $\neg(x\wedge y)=\neg x\vee\neg y$. Moreover all elements of the form $\neg x$ are Boolean.
	\item They satisfy $x=\neg\neg x(\neg\neg x\to x)$, and as $\neg\neg x$ is Boolean this is equivalent to $x=\neg\neg x\wedge(\neg x\vee x)$.
	\item All directly indecomposable Stonean residuated lattices are of the form $\alg 2\oplus\alg D$ for some $\alg D\in\mathsf{CIRL}$ and viceversa.
	\item As subdirectly irreducible algebras are directly indecomposable, and filters in $\alg 2\oplus\alg D\in\mathsf{SRL}$	are either filters of $\alg D$ or the whole algebra $\alg 2\oplus\alg D$, subdirectly irreducible algebras in $\mathsf{SRL}$ are of the form $\alg 2\oplus\alg D$ for some subdirectly irreducible $\alg D\in\mathsf{CIRL}$.
\end{itemize}

In \cite{BusanicheCignoli2014}, as pseudocomplemented lattices satisfy the Glivenko equation
\begin{align*}\neg\neg (\neg\neg x\to x)=1,\end{align*}
they present in Theorem 5.17 a bijective corresponence between good lattice filters of a pseudocomplemented distributive residuated lattice $\alg A$ and good admissible subalgebras of $K(\alg A)$. This result can be adapted and improved for Stonean residuated lattices.

Let $\alg A\in\mathsf{SRL}$; a lattice filter $F$ in $\alg A$ is called \textsl{good} if $\neg \neg x\in F$ implies $x\in F$.

\begin{lemma}Good lattice filters in $\alg A\in\mathsf{SRL}$ are exactly the lattice filters containing all dense elements $D=\{x\in A:\neg x=0\}$.\end{lemma}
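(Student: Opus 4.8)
The plan is to establish the stated equality by a double inclusion, using only the facts about $\mathsf{SRL}$ recalled just above: pseudocomplementation ($x\wedge\neg x=0$), the De Morgan law $\neg(x\vee y)=\neg x\wedge\neg y$ (which holds in every commutative residuated lattice, since $(x\vee y)\to 0=(x\to 0)\wedge(y\to 0)$), and the decomposition $x=\neg\neg x\wedge(\neg x\vee x)$.

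First I would handle the implication \emph{good $\Rightarrow$ contains $D$}. Let $F$ be a good lattice filter and let $x\in D$, i.e.\ $\neg x=0$. Then $\neg\neg x=\neg 0=1$, which lies in $F$ because $F$ is a lattice filter; goodness then yields $x\in F$. Hence $D\subseteq F$.

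For the converse, suppose $D\subseteq F$ and $\neg\neg x\in F$; I must show $x\in F$. The key step is to observe that $\neg x\vee x$ is dense: by De Morgan and pseudocomplementation, $\neg(\neg x\vee x)=\neg\neg x\wedge\neg x=0$, so $\neg x\vee x\in D\subseteq F$. Since $F$ also contains $\neg\neg x$ and is closed under finite meets, the identity $x=\neg\neg x\wedge(\neg x\vee x)$ forces $x\in F$. Thus $F$ is good.

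There is no real obstacle here: the only point requiring care is invoking the decomposition $x=\neg\neg x\wedge(\neg x\vee x)$ and the De Morgan identity in exactly the form in which they appear in the list of properties of $\mathsf{SRL}$, after which the argument reduces to the two short verifications above.
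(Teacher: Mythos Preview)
Your proof is correct and follows essentially the same route as the paper's own argument: both directions use $\neg\neg x=1$ for dense $x$ in the forward implication, and the decomposition $x=\neg\neg x\wedge(\neg x\vee x)$ together with the density of $\neg x\vee x$ in the converse. Your only addition is spelling out why $\neg x\vee x$ is dense via De~Morgan and pseudocomplementation, which the paper leaves implicit.
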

\begin{proof}If $F$ is a good filter and $\neg x = 0$, then clearly $\neg\neg x=1\in F$, so $x\in F$ and the filter contains all dense elements. Reciprocally, if $F$ is a lattice filter containing all dense elements and $\neg\neg x\in F$, recalling that $x=\neg\neg x\wedge(\neg x\vee x)$ and that $\neg x\vee x$ is dense, $\neg\neg x,\neg x\vee x \in F$ so $x\in F$.\end{proof}

We say that an admissible subalgebra $\alg S$ of $K(\alg A)$ is \textsl{good} provided $(\neg\neg a,\neg\neg b) \in S$ implies $(a,b) \in S$.

\begin{lemma}Let $\alg A\in\mathsf{SRL}$. All admissible subalgebras of $K(\alg A)$ are good.\end{lemma}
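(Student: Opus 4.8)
The plan is to verify the defining condition of ``good'' directly: I will show that if $\alg S$ is an arbitrary admissible subalgebra of $K(\alg A)$ and $(\neg\neg a,\neg\neg b)\in S$, then $(a,b)\in S$, by exhibiting $(a,b)$ as a term built from $(\neg\neg a,\neg\neg b)$ together with a handful of elements that lie in \emph{every} admissible subalgebra. First I would record what comes for free: being admissible, $\alg S$ contains $K(\alg A)^-=\{\la x,1\ra:x\in A\}$, and, being a subalgebra, it is closed under $\nneg$; since $\nneg\la x,y\ra=\la y,x\ra$ in $K(\alg A)$, also $\{\la 1,x\ra:x\in A\}\sse S$.

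The key computation is the value of $\neg$ on $K(\alg A)$: because $0_{K(\alg A)}=\la 0,1\ra$, one gets $\neg\la x,1\ra=\la\neg x\meet 1,x\ra=\la\neg x,x\ra$, so the entry $x$ is retained in the second coordinate. Consequently, for any \emph{dense} $d\in A$ (i.e.\ $\neg d=0$) we have $\neg\la d,1\ra=\la 0,d\ra\in S$, and hence $\nneg\la 0,d\ra=\la d,0\ra\in S$. Now put $p:=\neg a\join a$ and $q:=\neg b\join b$. Since $\alg A$ is pseudocomplemented, $\neg p=\neg\neg a\meet\neg a=0$ and likewise $\neg q=0$, so $p$ and $q$ are dense; therefore $\la p,0\ra,\la 0,q\ra\in S$.

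It then remains only to assemble $(a,b)$, and here the Stonean identity $x=\neg\neg x\meet(\neg x\join x)$ does the work: computing meets and joins coordinatewise in $K(\alg A)$,
\[
\la\neg\neg a,\neg\neg b\ra\meet\la p,0\ra=\la\,\neg\neg a\meet p,\ \neg\neg b\join 0\,\ra=\la a,\neg\neg b\ra\in S,
\]
and then
\[
\la a,\neg\neg b\ra\join\la 0,q\ra=\la\,a\join 0,\ \neg\neg b\meet q\,\ra=\la a,b\ra\in S.
\]
Since $\alg S$ was an arbitrary admissible subalgebra, every admissible subalgebra of $K(\alg A)$ is good.

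The step I expect to require the most care is not any inequality but the bookkeeping around the operations of $K(\alg A)$: one must use that $0_{K(\alg A)}=\la 0,1\ra$ (so that $\neg\la x,1\ra=\la\neg x,x\ra$, not $\la\neg x,0\ra$) and that $\nneg$ merely transposes coordinates. With the wrong formula for $\neg$ one would ``deduce'' that $\la 0,0\ra$ lies in every admissible subalgebra, i.e.\ that each of them equals $K(\alg A)$, contradicting, e.g., the existence of the proper admissible subalgebra $K_0(\alg G_3)\subsetneq K(\alg G_3)$. Once the operation tables are pinned down, the remaining observation — that exactly the images $\la d,0\ra,\la 0,d\ra$ of dense elements $d$ are available, and that these recombine with $(\neg\neg a,\neg\neg b)$ through the Stonean decompositions of $a$ and $b$ — is immediate.
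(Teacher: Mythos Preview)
Your proof is correct. Both arguments follow the same overall strategy—exhibit $(a,b)$ as a term in elements already known to lie in every admissible subalgebra—but the specific decomposition differs. The paper works with the elements $(a,\neg a)=\nneg\neg\la a,1\ra$, $(\neg b,b)=\neg\la b,1\ra$, $(1,b)$ and $(\neg\neg b,\neg\neg a)=\nneg(\neg\neg a,\neg\neg b)$, and computes
\[
\bigl(((\neg b,b)\meet(\neg\neg b,\neg\neg a))\join(a,\neg a)\bigr)\meet(1,b)=(a,b),
\]
using along the way that $\neg a,\neg b$ are Boolean so that distributivity is available. Your route instead isolates the pairs $\la d,0\ra,\la 0,d\ra$ for \emph{dense} $d$—obtained as $\nneg\neg\la d,1\ra$ and $\neg\la d,1\ra$—and then applies the Stonean decomposition $x=\neg\neg x\meet(\neg x\join x)$ coordinatewise. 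This is a bit more transparent conceptually, since it makes explicit the link with the characterization of good filters as those containing all dense elements; the paper's computation is shorter but more ad hoc. Either way the essential ingredients are the same: the formula $\neg\la x,1\ra=\la\neg x,x\ra$, closure under $\nneg$, and the Stonean identities.
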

\begin{proof}Let $\alg S$ be an admissible subalgebra of $K(\alg A)$ and suppose $(\neg\neg a,\neg\neg b) \in S$. As $(1,b),(a,\neg a),(\neg b,b),(\neg\neg b,\neg\neg a)\in S$ and recalling that the elements $\neg a,\neg b$ are Boolean, the following element is in $S$:
\begin{align*}\left(\left((\neg b,b)\wedge(\neg\neg b,\neg\neg a)\right) \vee (a,\neg a)\right)\wedge(1,b) &= \left(\left((0,b\vee \neg\neg a)\right) \vee (a,\neg a)\right)\wedge(1,b)\\
&= \left(a,(b\vee \neg\neg a)\wedge \neg a \right)\wedge(1,b)\\
&= \left(a,(b\wedge \neg a)\vee (\neg\neg a\wedge \neg a)\right)\wedge(1,b)\\
&= (a,b\wedge \neg a)\wedge(1,b)\\
&= (a,b).\end{align*}
\end{proof}

From Theorem \ref{admissibleordinalsum} it is clear that the only admissible subalgebras of $K(\alg 2\oplus\alg D)$ for $\alg D\in\mathsf{CIRL}$ are $K(\alg 2\oplus\alg D)$ and the subalgebra with universe $K(2\oplus D)\setminus \{(0,0)\}$. However, from the previous results and Theorem 6.10 in \cite{BusanicheCignoli2014} we have a better description of admissible subalgebras for $K(\alg A)$ where $\alg A$ is any Stonean residuated lattice.

\begin{theorem}For each $\alg A\in\mathsf{SRL}$, the correspondence
\begin{align*}F \mapsto \alg S_F \end{align*}
defines a bijection from the set of lattice filters of $\alg A$ containing all dense elements onto the set of all admissible subalgebras of $K(\alg A)$, where
$\alg S_F$ is the subalgebra of $K(\alg A)$ with universe
\begin{align*}\{(a,b)\in K(A):\neg a\to\neg\neg b\in F\}.\end{align*}
\end{theorem}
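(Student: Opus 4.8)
The plan is to derive the statement from the correspondence theorem of \cite{BusanicheCignoli2014} (Theorem~5.17, in the more explicit form of Theorem~6.10 there) together with the two lemmas just established. Since every $\alg A\in\mathsf{SRL}$ is pseudocomplemented it satisfies the Glivenko equation $\neg\neg(\neg\neg x\imp x)\app 1$, so that theorem applies to $K(\alg A)$ and yields a bijection $F\mapsto K(\alg A,F)$ between the \emph{good} lattice filters of $\alg A$ and the \emph{good} admissible subalgebras of $K(\alg A)$. Now the first of the two lemmas identifies the good lattice filters of $\alg A$ with exactly the lattice filters containing the set $D$ of dense elements (and $D$ is itself the least such filter), while the second says that for $\alg A\in\mathsf{SRL}$ \emph{every} admissible subalgebra of $K(\alg A)$ is good. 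Hence the cited bijection restricts and co-restricts to a bijection from the lattice filters of $\alg A$ containing all dense elements onto the set of \emph{all} admissible subalgebras of $K(\alg A)$, with inverse sending an admissible subalgebra $\alg S$ to the filter of $\alg A$ extracted in \cite{BusanicheCignoli2014}.

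What remains is to rewrite the universe of $K(\alg A,F)$, which \cite{BusanicheCignoli2014} describes through the double-negation map, in the announced form $\{(a,b)\in K(A):\neg a\imp\neg\neg b\in F\}$ under the extra Stonean hypothesis. Here I would use the Stonean identities collected above — $\neg\neg x$ is Boolean, $\neg(x\meet y)=\neg x\join\neg y$, $x\meet\neg x=0$, and the decomposition $x=\neg\neg x\meet(\neg x\join x)$ — reasoning just as in the proof that all admissible subalgebras are good: the ``negative'' coordinate of a pair splits into a Boolean part $\neg\neg b$ and a dense part $\neg b\join b$, and membership in a filter containing $D$ depends only on the Boolean part. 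Several degenerate cases corroborate the formula: $(a,1)\in\alg S_F$ for every $a\in A$ because $\neg a\imp\neg\neg 1=1\in F$, so $\alg S_F$ always contains the negative cone and is a genuine candidate admissible subalgebra; $(0,0)\in\alg S_F$ if and only if $0\in F$, that is $F=A$, in agreement with Lemma~\ref{abovek3} (which gives $\alg S_F=K(\alg A)$ precisely when $F$ is improper); and $F=D$ returns the minimal admissible subalgebra $K_0(\alg A)$, consistently with the Heyting case of Theorem~\ref{kalmanheyting}.

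The crux of the argument is exactly this translation. In carrying it out I would in particular need to re-prove, in Stonean guise, that $\{(a,b):\neg a\imp\neg\neg b\in F\}$ is closed under the operations of $K(\alg A)$: closure under $\nneg$ amounts to the equivalence $\neg a\imp\neg\neg b\in F\iff\neg b\imp\neg\neg a\in F$, which holds because $\neg a$ and $\neg b$ are Boolean and $F\supseteq D$; closure under $\meet,\join,\cdot,\imp$ uses the De Morgan law together with the way these operations in $K(\alg A)$ recombine the two coordinates through residuals. Once the two universes are shown to coincide, injectivity and surjectivity of $F\mapsto\alg S_F$ — and the fact that it and its inverse are order preserving — are inherited directly from \cite{BusanicheCignoli2014} and the two lemmas, with nothing further to check.
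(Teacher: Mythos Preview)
Your proposal is correct and follows exactly the route the paper takes: the theorem is stated there without a displayed proof, justified only by the sentence ``from the previous results and Theorem 6.10 in \cite{BusanicheCignoli2014}'', i.e.\ combine the cited bijection between good filters and good admissible subalgebras with the two preceding lemmas identifying good filters with dense-containing filters and showing every admissible subalgebra is good. Your additional care about matching the explicit universe formula $\{(a,b):\neg a\imp\neg\neg b\in F\}$ to the one in \cite{BusanicheCignoli2014} is appropriate --- the paper silently relies on that reference for it.
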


Observe that in particular if $\alg A\in\mathsf{SRL}$ is directly indecomposable, then there is only one proper filter containing all dense elements, so the only proper admissible subalgebra of $K(\alg A)$ will have universe $K(A)\setminus \{(0,0)\}$.

\medskip

With these results in consideration, we can investigate the lattice $\Lambda(K(\mathsf{SRL}))$. Not surprisingly it has highly complex, but we will get some information about the bottom part of the lattice.

Clearly $\VV(\alg K_3)$ is the only atom, and the next result is also immediate.

\begin{theorem}The only finitely generated almost minimal varieties of $K(\mathsf{SRL})$ are $\VV(\alg K_4)=K(\mathsf{BA})$ and $\VV(\alg K_8)=\VV(K_0(\alg 2\oplus\alg 2))$.\end{theorem}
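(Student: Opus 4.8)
The plan is to reduce the statement to Theorem~\ref{cover} together with a classification of the finite rigid algebras lying in $\mathsf{SRL}$. First note that $\mathsf{BKL}$ is congruence distributive and $\VV(\alg K_3)$ is its unique atom, so a finitely generated variety $\vv V$ that is almost minimal and different from $\VV(\alg K_3)$ is generated by a single finite subdirectly irreducible algebra $\alg A$: writing $\vv V$ as the variety generated by a single finite algebra and invoking J\'onsson's Lemma, the finite subdirectly irreducibles generate $\vv V$, and if each generated a proper subvariety their join could only be $\VV(\alg K_3)$, a contradiction. So let $\vv V=\VV(\alg A)$ be finitely generated almost minimal in $K(\mathsf{SRL})$ with $\vv V\neq K(\mathsf{BA})$; then $\alg A$ is finite subdirectly irreducible, $\alg A^-\in\mathsf{SRL}$ by Lemma~\ref{techlemma}(1), and $\alg A^-$ is rigid by Theorem~\ref{cover}.

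Next I would classify the finite rigid members of $\mathsf{SRL}$. A rigid algebra is subdirectly irreducible, hence by the structure theory recalled above it is of the form $\alg 2\oplus\alg D$ with $\alg D\in\mathsf{CIRL}$ subdirectly irreducible, and $\alg D$ is nontrivial since $|A^-|>2$. Being finite, $\alg D$ has a least element $\bot$, and $\{\bot,1\}$ is the universe of a two-element subalgebra of $\alg D$; hence $\alg 2\oplus\{\bot,1\}\cong\alg G_3$ is the universe of a subalgebra of $\alg 2\oplus\alg D$, and it is proper and distinct from $\{0,1\}$ unless $\alg D=\{\bot,1\}$. Rigidity therefore forces $\alg D\cong\alg 2$, i.e.\ $\alg A^-\cong\alg 2\oplus\alg 2=\alg G_3$. (That $\alg G_3$ is itself rigid is already observed in the excerpt, or follows from Theorem~\ref{stiff2} since its $0$-free reduct is tight.)

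Now, with $\alg A^-\cong\alg G_3$, Lemma~\ref{inclusions} embeds $\alg A$ into $K(\alg G_3)$, and by Lemma~\ref{subdir} its negative cone equals $K(\alg G_3)^-\cong\alg G_3$, so $\alg A$ is an admissible subalgebra of $K(\alg G_3)$. Since $\alg G_3$ is a Heyting algebra, Theorem~\ref{kalmanheyting} puts the admissible subalgebras of $K(\alg G_3)$ in bijection with the regular filters of $\alg G_3=\{0<a<1\}$; the dense elements of $\alg G_3$ are exactly $a$ and $1$, so the regular filters are $\{a,1\}$ and $G_3$, giving precisely two admissible subalgebras: the minimal one $K_0(\alg G_3)$, which has $|K(\alg G_3)|-1=8$ elements and so equals $\alg K_8$, and $K(\alg G_3)=\alg K_9$. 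Because $0$ is meet irreducible in $\alg G_3$ and $\alg G_3$ is rigid, the converse part of Theorem~\ref{cover} shows $K_0(\alg G_3)=\alg K_8$ generates an almost minimal variety different from $K(\mathsf{BA})$. On the other hand $\alg K_4\le\alg K_9$ (Lemma~\ref{abovek3}) and $\alg K_8\le\alg K_9$, and applying J\'onsson's Lemma to these finite algebras one sees that $\VV(\alg K_4)$ and $\VV(\alg K_8)$ are two incomparable proper subvarieties of $\VV(\alg K_9)$, so $\VV(\alg K_9)$ is not a cover of the atom. Hence $\alg A\cong\alg K_8$ and $\vv V=\VV(\alg K_8)=\VV(K_0(\alg 2\oplus\alg 2))$. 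Conversely, both $\VV(\alg K_8)$ and $K(\mathsf{BA})=\VV(\alg K_4)$ are finitely generated almost minimal varieties of $K(\mathsf{SRL})$ — the latter because it is almost minimal in $\Lambda(\mathsf{BKL})$ and $\alg K_3,\alg K_4\in K(\mathsf{BA})\sse K(\mathsf{SRL})$ — which closes the argument.

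I expect the main obstacle to be the classification step: verifying that the conjunction of the three nontrivial clauses in the definition of \emph{rigid} together with membership in $\mathsf{SRL}$ really does collapse to $\alg A^-\cong\alg G_3$, i.e.\ that the $\{\bot,1\}\cong\alg 2$ subalgebra is genuinely forced and no exotic finite Stonean algebra escapes it. Once $\alg A^-$ is pinned down, identifying the admissible subalgebras of $K(\alg G_3)$ and discarding $\VV(\alg K_9)$ is essentially the bookkeeping already carried out for $K(\mathsf{GA})$ in \cite{AglianoMarcos2020a}.
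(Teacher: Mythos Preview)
Your proof is correct and follows the same route as the paper's: both arguments hinge on Theorem~\ref{cover} and on showing that $\alg G_3=\alg 2\oplus\alg 2$ is the unique finite rigid algebra in $\mathsf{SRL}$. The paper merely asserts this classification and says ``the conclusion follows'', whereas you spell out the subalgebra argument forcing $\alg D\cong\alg 2$, identify the two admissible subalgebras of $K(\alg G_3)$ via Theorem~\ref{kalmanheyting}, and explicitly rule out $\alg K_9$---details the paper leaves to the reader.
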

\begin{proof}It is clear that $\VV(\alg K_4)$ is an almost minimal variety. Moreover, the only (up to isomorphism) rigid finite Stonean residuated lattice is $\alg 2\oplus \alg 2=\alg G_3$, and the conclusion follows.\end{proof}

From the properties of Stonean residuated lattices, we can say something more going upwards in the lattice of subvarieties of $K(\mathsf{SRL})$.

\begin{lemma}If $\alg A\in\mathsf{SRL}$ and $f:\alg A\to\alg 2$ is a morphism, then $f(x)=0$ implies $x=0$.\end{lemma}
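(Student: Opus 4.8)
The plan is to exploit that $\neg x = x\imp 0$ is a term operation, hence preserved by $f$, and then to pin $\neg x$ down to $\{0,1\}$ using the structure of Boolean elements in Stonean residuated lattices.

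First I would record the trivialities: $f(0)=0$ and $f(1)=1$ hold for any homomorphism into $\alg 2$, so the statement is vacuous when $\alg A$ is trivial and otherwise $f$ is onto $\alg 2$. Since $\neg$ is term-definable, $f(\neg x)=f(x)\imp f(0)=f(x)\imp 0=\neg f(x)$ for every $x\in A$; in particular, if $f(x)=0$ then $f(\neg x)=\neg 0=1$ in $\alg 2$.

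The key step is the claim $\neg x\in\{0,1\}$. By the recalled properties of Stonean residuated lattices every element of the form $\neg x$ is Boolean, and a directly indecomposable bounded residuated lattice has no Boolean elements other than $0$ and $1$; since the algebras relevant to the lattice of subvarieties are directly indecomposable — the subdirectly irreducible Stonean residuated lattices being exactly the algebras $\alg 2\oplus\alg D$ with $\alg D\in\mathsf{CIRL}$ subdirectly irreducible — we get $\neg x\in\{0,1\}$. (This can also be checked by hand: in $\alg 2\oplus\alg D$, if $x\neq 0$ then $x\cdot 0=0$ while $x\cdot y$ remains in the $\alg D$-part for every $y$ there, so $\neg x=x\imp 0=0$, whereas $\neg 0=1$.) Combining, $f(\neg x)=1$ together with $\neg x\in\{0,1\}$ excludes $\neg x=0$, so $\neg x=1$, and $\neg x=1$ means $x\le 0$, i.e. $x=0$.

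The one genuinely non-routine ingredient will be this structural input — recognising $\neg x$ as Boolean and invoking direct indecomposability to force it into $\{0,1\}$; everything else is the one-line computation that $f$ commutes with $\neg$ together with the equivalence $\neg x=1\iff x=0$. (In the intended application the lemma is only ever used for subdirectly irreducible $\alg A$, which are automatically of the above form, so the reduction to the directly indecomposable case costs nothing.)
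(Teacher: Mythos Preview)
Your proof is correct and follows essentially the same route as the paper's: reduce to the subdirectly irreducible case $\alg 2\oplus\alg D$, observe that $\neg x\in\{0,1\}$ there, and use that $f$ commutes with $\neg$. The paper phrases this as the contrapositive (for $x\ne 0$ one has $\neg x=0$, hence $f(\neg x)=0$, hence $f(x)=1$), while you argue directly from $f(x)=0$; you are also more explicit than the paper that the restriction to subdirectly irreducible algebras is genuinely needed rather than a mere convenience, which is accurate since the statement fails for, e.g., $\alg 2\times\alg 2$.
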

\begin{proof}It is enough to show this for subdirectly irreducible algebras. In this case we have that $\alg A\cong \alg 2\oplus\alg D$ for some $\alg R\in\mathsf{CIRL}$, and if $x\in D$ $\neg x=0$, so $f(\neg x)=0$ and $f(x)=1$.\end{proof}

\begin{corollary}If $\alg A\in\mathsf{SRL}$ is subdirectly irreducible, then $\alg K_4\not\in\VV(K_0(\alg A))$.\end{corollary}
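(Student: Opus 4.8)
The plan is to follow exactly the pattern used earlier in this section for $K_0(\mathbf 2\oplus\alg C_\o)$ and $K_0(\alg \L_1^\o)$: first show $\alg K_4$ is omitted by everything in $\SU\PP_u(K_0(\alg A))$, and then rule it out as a quotient via Jónsson's Lemma. Since $\alg A$ is subdirectly irreducible it is directly indecomposable, so $\alg A\cong\alg 2\oplus\alg D$ for some $\alg D\in\mathsf{CIRL}$; by Theorem~\ref{admissibleordinalsum} the unique proper admissible subalgebra of $K(\alg A)$ then has universe $K(A)\setminus\{(0,0)\}$, so $(0,0)\notin K_0(\alg A)$. By Lemma~\ref{abovek3} this yields $\alg K_4\not\le K_0(\alg A)$, and Lemma~\ref{firstorder} upgrades this to $\alg K_4\not\le\alg B$ for every $\alg B\in\SU\PP_u(K_0(\alg A))$.

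Next I would argue by contradiction: suppose $\alg K_4\in\VV(K_0(\alg A))$. Since $\alg K_4=K(\mathbf 2)$ is simple (its congruence lattice is isomorphic to that of $\mathbf 2$ by the correspondence recalled in Section~\ref{S1.prelim}) and $\VV(K_0(\alg A))$ is congruence distributive, Lemma~\ref{jonsson} places $\alg K_4$ in $\HH\SU\PP_u(K_0(\alg A))$: there are $\alg B\in\SU\PP_u(K_0(\alg A))$ and a congruence $\theta$ of $\alg B$ with $\alg B/\theta\cong\alg K_4$. Passing to negative cones, $\alg B^-/\theta^-\cong(\alg B/\theta)^-\cong\mathbf 2$, and since $K_0(\alg A)^-\cong\alg A$ (Lemma~\ref{inclusions}) Lemma~\ref{subdir} gives $\alg B^-\in\SU\PP_u(\alg A)\subseteq\mathsf{SRL}$. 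Hence the quotient map $g\colon\alg B^-\to\alg B^-/\theta^-\cong\mathbf 2$ is a morphism between Stonean residuated lattices, so by the preceding Lemma $g(x)=0$ forces $x=0$.

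Finally I would transport the element $(0,0)$ of the copy of $\alg K_4$ back into $\alg B$: it satisfies $\sim(0,0)=(0,0)$ and $(0,0)\wedge 1=0$ in $\alg K_4$, so choosing $c\in B$ with $c/\theta=(0,0)$ we see that $c\wedge 1$ and $\sim c\wedge 1$ lie in $B^-$ and are sent to $0$ by $g$; therefore $c\wedge 1=0=\sim c\wedge 1$. Since the embedding $\alg B\hookrightarrow K(\alg B^-)$ of Lemma~\ref{inclusions} sends $x$ to $(x\wedge 1,\sim x\wedge 1)$, this means $c=(0,0)$, so $\alg B$ contains an element ${\bf o}$ with $\sim{\bf o}={\bf o}$ and ${\bf o}\wedge 1=0$; by Lemma~\ref{abovek3} again, $\alg K_4\le\alg B$, contradicting the first paragraph. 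Hence $\alg K_4\notin\VV(K_0(\alg A))$.

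I expect the only delicate point to be the bookkeeping in the middle paragraph — checking that $(\cdot)^-$ interacts well enough with $\SU$, $\PP_u$ and quotients to give both $\alg B^-\in\mathsf{SRL}$ and $\alg B^-/\theta^-\cong\mathbf 2$ — but this is routine given Lemma~\ref{subdir} and the congruence isomorphism $\alpha\mapsto\alpha^-$ from Section~\ref{S1.prelim}, so there is no real obstacle; the whole argument is a direct adaptation of the non-finitely-generated examples already treated in Section~\ref{S2.atomsandcovers}.
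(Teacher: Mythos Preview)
Your proof is correct and follows essentially the same route as the paper's own argument: use the first-order definability of ``$\alg K_4\not\le$'' (Lemma~\ref{firstorder}) to rule out $\alg K_4$ from $\SU\PP_u(K_0(\alg A))$, then use the preceding Lemma on morphisms into $\alg 2$ to rule it out from $\HH\SU\PP_u(K_0(\alg A))$, and conclude via J\'onsson. The paper's proof compresses all of this into two sentences; your version simply makes the bookkeeping explicit, in particular the passage to $\alg B^-$ and the pullback of $(0,0)$, which is exactly what the terse phrase ``from the previous result'' is hiding.
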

\begin{proof}Recall that ``$\alg K_4 \not \le \alg A$'' is first-order definable, so as it holds in $K_0(\alg A)$ it will be true for any ultrapower. From the previous result we have that $\alg K_4\not\in \HH\SU\PP_u(K_0(\alg A))$.\end{proof}

\begin{lemma}Let $\alg D\in\mathsf{CIRL}$ be finite and subdirectly irreducible. Then $\VV(\alg 2\oplus\alg D)$ covers $\VV(\alg 2\oplus\alg 2)$ in $\Lambda(\mathsf{SRL})$ if and only if $\VV(\alg D)$ covers $\VV(\alg 2)=\mathsf{GBA}$ in $\Lambda(\mathsf{CIRL})$.\end{lemma}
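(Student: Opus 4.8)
The plan is to exploit the tight connection between the ordinal-sum structure of subdirectly irreducible Stonean residuated lattices and the structure of their zero-free subreducts. Recall from the list of properties of $\mathsf{SRL}$ that every finite subdirectly irreducible $\alg A \in \mathsf{SRL}$ is directly indecomposable, hence of the form $\alg 2 \oplus \alg D$ with $\alg D \in \mathsf{CIRL}$ finite and subdirectly irreducible; and conversely $\alg 2 \oplus \alg D$ is always subdirectly irreducible in $\mathsf{SRL}$. The key observation is that passing to $\SU^0$ (zero-free subreducts) interacts well with ordinal sums: the zero-free reduct of $\alg 2 \oplus \alg D$ is essentially $\alg 2^- \oplus \alg D$ where $\alg 2^-$ is the one-element trivial hoop, so in fact $\mathsf{SRL}$-chains $\alg 2 \oplus \alg D$ collapse, as far as their $\mathsf{CIRL}$-reduct with $0$ removed is concerned, to carry exactly the data of $\alg D$ together with a bottom element. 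This means the assignment $\alg 2 \oplus \alg D \mapsto \alg D$ is (up to the usual bookkeeping) a poset isomorphism between the subdirectly irreducibles of $\mathsf{SRL}$ above $\alg 2 \oplus \alg 2$ and the subdirectly irreducibles of $\mathsf{CIRL}$ above $\alg 2$.

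First I would make precise the two directions. For the ``only if'' direction, suppose $\VV(\alg 2 \oplus \alg D)$ covers $\VV(\alg 2 \oplus \alg 2)$. If $\VV(\alg D)$ did not cover $\mathsf{GBA} = \VV(\alg 2)$ in $\Lambda(\mathsf{CIRL})$, then since $\alg D$ is finite subdirectly irreducible and $\VV(\alg D) \supsetneq \VV(\alg 2)$, there would be a proper subvariety $\vv U$ with $\VV(\alg 2) \subsetneq \vv U \subsetneq \VV(\alg D)$; by J\'onsson's Lemma (Lemma \ref{jonsson}) $\vv U$ is generated by finite subdirectly irreducibles in $\HH\SU(\alg D)$, and one picks such a $\alg D' \in \HH\SU(\alg D)$ with $|D'| > 2$ and $\VV(\alg D') \subsetneq \VV(\alg D)$. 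Then $\alg 2 \oplus \alg D'$ is a finite subdirectly irreducible Stonean residuated lattice, it lies in $\HH\SU(\alg 2 \oplus \alg D) \subseteq \VV(\alg 2 \oplus \alg D)$ (ordinal sum is monotone under $\HH$ and $\SU$ in the relevant sense), it properly contains $\alg 2 \oplus \alg 2$, and $\VV(\alg 2 \oplus \alg D') \subsetneq \VV(\alg 2 \oplus \alg D)$ — contradicting the covering hypothesis. For the ``if'' direction, suppose $\VV(\alg D)$ covers $\VV(\alg 2)$ and let $\vv W$ satisfy $\VV(\alg 2 \oplus \alg 2) \subseteq \vv W \subsetneq \VV(\alg 2 \oplus \alg D)$. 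Every finite subdirectly irreducible of $\vv W$ is of the form $\alg 2 \oplus \alg E$ with $\alg E \in \HH\SU(\alg D)$ finite subdirectly irreducible; since $\VV(\alg D)$ covers $\VV(\alg 2)$, either $\alg E \cong \alg 2$ (trivial hoop case, giving $\alg 2 \oplus \alg 2$ or $\alg 2$ itself) or $\VV(\alg E) = \VV(\alg D)$. If the latter occurred for some such $\alg E$ we would get $\VV(\alg 2 \oplus \alg D) \subseteq \vv W$, a contradiction; hence all finite subdirectly irreducibles of $\vv W$ generate subvarieties of $\VV(\alg 2 \oplus \alg 2)$, so $\vv W = \VV(\alg 2 \oplus \alg 2)$.

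The technical heart of both directions is the claim that for finite $\alg D, \alg E \in \mathsf{CIRL}$ one has $\alg 2 \oplus \alg E \in \HH\SU(\alg 2 \oplus \alg D)$ if and only if $\alg E \in \HH\SU(\alg D)$, together with the statement that $\VV(\alg 2 \oplus \alg E) \subseteq \VV(\alg 2 \oplus \alg D)$ iff $\VV(\alg E) \subseteq \VV(\alg D)$; this is where I expect the main obstacle. One direction of the subalgebra/quotient correspondence is routine (a subalgebra or quotient of $\alg D$ induces one of $\alg 2 \oplus \alg D$ by acting trivially on the $\alg 2$ part), but the converse — that every subdirectly irreducible quotient of a subalgebra of $\alg 2 \oplus \alg D$ again ``respects the ordinal decomposition'' — uses that in a Stonean algebra the Boolean skeleton (the complemented elements) is definable and that congruences on $\alg 2 \oplus \alg D$ correspond to congruences on $\alg D$, which in turn follows from the description of filters of $\alg 2 \oplus \alg D$ recalled above (filters are either filters of $\alg D$ or the whole algebra). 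I would isolate this as a preliminary sublemma on the structure of $\HH\SU$ of ordinal sums $\alg 2 \oplus \alg D$ in $\mathsf{SRL}$, prove it once using the Stonean-specific facts listed before the statement, and then the covering equivalence falls out by the J\'onsson-Lemma bookkeeping sketched above.
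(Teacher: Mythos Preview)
The paper states this lemma without proof, treating it as a routine consequence of the structural facts about $\mathsf{SRL}$ listed just before it (directly indecomposable Stonean algebras are exactly ordinal sums $\alg 2\oplus\alg D$, and the filters of $\alg 2\oplus\alg D$ are either filters of $\alg D$ or the whole algebra). Your plan is precisely the natural unpacking of these facts via J\'onsson's Lemma, and it is correct; the sublemma you isolate---that $\HH\SU(\alg 2\oplus\alg D)$ in $\mathsf{SRL}$ consists of $\alg 2$ together with all $\alg 2\oplus\alg E$ for $\alg E\in\HH\SU(\alg D)$ in $\mathsf{CIRL}$---is exactly what the paper's filter description gives, so there is no genuinely different route here.

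One small wording issue: in your ``if'' direction you write ``either $\alg E\cong\alg 2$ (trivial hoop case, giving $\alg 2\oplus\alg 2$ or $\alg 2$ itself)'', which conflates two cases. If $\VV(\alg E)\subseteq\VV(\alg 2)=\mathsf{GBA}$ then the finite subdirectly irreducible $\alg E$ is either trivial (one element, giving $\alg 2\oplus\alg E\cong\alg 2$) or $\alg E\cong\alg 2$ (giving $\alg 2\oplus\alg 2$); separate these cleanly. Also, in the ``only if'' direction you should note explicitly that $\VV(\alg 2\oplus\alg 2)\subsetneq\VV(\alg 2\oplus\alg D')$: since you chose $\alg D'$ with $\VV(\alg 2)\subseteq\VV(\alg D')$ and $|D'|>2$, J\'onsson gives $\alg 2\in\HH\SU(\alg D')$, whence $\alg 2\oplus\alg 2\in\HH\SU(\alg 2\oplus\alg D')$, and the inclusion is strict because $\alg 2\oplus\alg D'\not\cong\alg 2\oplus\alg 2$.
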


From these results, we obtain the following.

\begin{theorem}Let $\alg A\in K(\mathsf{SRL})$ be finite and subdirectly irreducible. Then $\VV(\alg A)$ is a cover of $\VV(\alg K_8)$ but not of $\VV(\alg K_4)=K(\mathsf{BA})$ if and only if $\alg A\cong K_0(\alg 2\oplus \alg D)$ for some $\alg D\in\mathsf{CIRL}$ finite and subdirectly irreducible that generates a cover of $\VV(\alg 2)=\mathsf{GBA}$ in $\Lambda(\mathsf{CIRL})$.\end{theorem}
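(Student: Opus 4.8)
The plan is to peel off the easy clause and then reduce to the preceding Lemma on $\mathsf{SRL}$ by tracking the operator $K_0$. Since $\alg A$ is finite and subdirectly irreducible in $K(\mathsf{SRL})$, the congruence isomorphism $\Con A\cong\op{Con}(\alg A^-)$ makes $\alg A^-$ a finite subdirectly irreducible member of $\mathsf{SRL}$, hence $\alg A^-\cong\alg 2\oplus\alg D$ for some finite subdirectly irreducible $\alg D\in\mathsf{CIRL}$; moreover $\alg A$ embeds as an \emph{admissible} subalgebra of $K(\alg 2\oplus\alg D)$, so by Theorem \ref{admissibleordinalsum} (and meet irreducibility of $0$ in $\alg 2\oplus\alg D$) either $\alg A\cong K(\alg 2\oplus\alg D)$ or $\alg A\cong K_0(\alg 2\oplus\alg D)=K(\alg 2\oplus\alg D)\setminus\{(0,0)\}$. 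By Lemma \ref{abovek3}, $\alg K_4\le\alg A$ iff $(0,0)\in A$; and by the Corollary above, $\alg 2\oplus\alg D$ being subdirectly irreducible Stonean forces $\alg K_4\notin\VV(K_0(\alg 2\oplus\alg D))$. Hence ``$\VV(\alg A)$ is not a cover of $\VV(\alg K_4)$'' holds exactly when $\alg A\cong K_0(\alg 2\oplus\alg D)$, and the theorem reduces to: for such $\alg A$, $\VV(\alg A)$ covers $\VV(\alg K_8)$ iff $\VV(\alg D)$ covers $\VV(\alg 2)=\mathsf{GBA}$ in $\Lambda(\mathsf{CIRL})$.

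Next I would record the $K_0$-transfer facts, proved just as in Theorem \ref{cover}: for $\alg A=K_0(\alg 2\oplus\alg D)$, the subalgebras of $\alg A$ are $\alg K_3$ together with the algebras $K_0(\alg 2\oplus\alg D')$ for $\alg D'\le\alg D$, and the quotients of $\alg A$ are the trivial algebra, $\alg K_3$, and the algebras $K_0(\alg 2\oplus\alg E)$ for quotients $\alg E$ of $\alg D$ (each such quotient being again admissible and $(0,0)$-free since $\alg K_4\notin\VV(\alg A)$). Thus $\HH\SU(\alg A)\subseteq\{\text{trivial},\alg K_3\}\cup\{K_0(\alg 2\oplus\alg E):\alg E\in\HH\SU(\alg D)\}$, with every algebra listed lying in $\VV(\alg A)$, while the same facts applied to $\alg K_8=K_0(\alg 2\oplus\alg 2)$ give $\HH\SU(\alg K_8)=\{\text{trivial},\alg K_3,\alg K_8\}$. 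For $(\Leftarrow)$: $\alg D$ finite nontrivial gives $\alg 2\le\alg D$, so $\alg K_8=K_0(\alg 2\oplus\alg 2)\le K_0(\alg 2\oplus\alg D)=\alg A$ and $\VV(\alg K_8)\subseteq\VV(\alg A)$, strictly since $\alg D\not\cong\alg 2$; and by Theorem \ref{alltights} a finite subdirectly irreducible $\alg D$ generating a cover of $\mathsf{GBA}$ is tight or the $0$-free reduct of $\alg G_3$ or $\alg N_4$, in each of which cases $\HH\SU(\alg D)$ contains, up to isomorphism, only the trivial algebra, $\alg 2$, and $\alg D$. Hence $\HH\SU(\alg A)\subseteq\{\text{trivial},\alg K_3,\alg K_8,\alg A\}$, and since $\VV(\alg K_3)\subseteq\VV(\alg K_8)$, Lemma \ref{jonsson} shows every subdirectly irreducible algebra of $\VV(\alg A)$ lies in $\VV(\alg K_8)$ or generates $\VV(\alg A)$; so nothing lies strictly between.

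For $(\Rightarrow)$, assume $\VV(\alg A)$ covers $\VV(\alg K_8)$. Then $\alg D$ is nontrivial and $\alg D\not\cong\alg 2$ (else $\alg A\cong\alg K_3$ or $\alg A\cong\alg K_8$), so $\VV(\alg 2)\subsetneq\VV(\alg D)$. If this inclusion were not a cover, then, $\VV(\alg D)$ being finitely generated, all its subvarieties are finitely generated, and we may pick a finite subdirectly irreducible non-Boolean $\alg E\in\HH\SU(\alg D)$ with $\VV(\alg 2)\subsetneq\VV(\alg E)\subsetneq\VV(\alg D)$. Then $\alg 2\oplus\alg E$ is subdirectly irreducible Stonean, and writing $\alg E$ as a quotient of a nontrivial $\alg D''\le\alg D$, the transfer facts give $K_0(\alg 2\oplus\alg D'')\le\alg A$ and present $K_0(\alg 2\oplus\alg E)$ as a quotient of $K_0(\alg 2\oplus\alg D'')$ (admissible and $(0,0)$-free, since $\alg K_4\notin\VV(\alg A)$), whence $K_0(\alg 2\oplus\alg E)\in\HH\SU(\alg A)\subseteq\VV(\alg A)$. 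Now $\alg 2\le\alg E$ gives $\alg K_8\le K_0(\alg 2\oplus\alg E)$, strictly at the level of varieties as $K_0(\alg 2\oplus\alg E)\not\cong\alg K_3,\alg K_8$ and so does not lie in $\HH\SU(\alg K_8)$; and $\VV(K_0(\alg 2\oplus\alg E))\subsetneq\VV(\alg A)$, since equality would force, by Lemma \ref{jonsson}, $\alg D\in\HH\SU(\alg E)\subseteq\VV(\alg E)\subsetneq\VV(\alg D)$. This yields $\VV(\alg K_8)\subsetneq\VV(K_0(\alg 2\oplus\alg E))\subsetneq\VV(\alg A)$, contradicting coverage; hence $\VV(\alg D)$ covers $\mathsf{GBA}$. (Alternatively one may phrase both directions as the statement that $\alg D\mapsto K_0(\alg 2\oplus\alg D)$ carries the cover relation above $\mathsf{GBA}$ in $\Lambda(\mathsf{CIRL})$ to the cover relation above $\VV(\alg K_8)$, and invoke the preceding Lemma on $\mathsf{SRL}$ directly.)

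The step I expect to demand the most care is the $K_0$-transfer bookkeeping: keeping exact track of $\HH\SU(K_0(\alg 2\oplus\alg D))$ and checking that every subalgebra and every quotient that arises is again of the form $K_0(\alg 2\oplus(\text{-}))$ and not the full Kalman product $K(\alg 2\oplus(\text{-}))$ — which is precisely where $\alg K_4\notin\VV(\alg A)$, Lemma \ref{abovek3}, and meet irreducibility of $0$ are used repeatedly — combined with the appeal to Theorem \ref{alltights}, which is what rules out a ``stray'' subalgebra such as $\alg 2^k$ with $k\ge 2$ inside $\alg D$; absent that, $K_0(\alg 2\oplus\alg 2^k)$ would generate a variety strictly between $\VV(\alg K_8)$ and $\VV(\alg A)$.
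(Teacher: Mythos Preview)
Your overall strategy is exactly the intended one: the paper gives no explicit proof here, and what you have written is the natural unfolding of the preceding Corollary ($\alg K_4\notin\VV(K_0(\alg 2\oplus\alg D))$) together with the Lemma relating covers of $\VV(\alg 2\oplus\alg 2)$ in $\Lambda(\mathsf{SRL})$ to covers of $\mathsf{GBA}$ in $\Lambda(\mathsf{CIRL})$. The $K_0$-transfer bookkeeping you describe (subalgebras and quotients of $K_0(\alg 2\oplus\alg D)$ are again of the form $K_0(\alg 2\oplus(\text{-}))$) is correct, and both the $(\Leftarrow)$ and $(\Rightarrow)$ arguments in your second and third paragraphs are sound.

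There is, however, one genuine slip in the first paragraph. You assert that ``$\VV(\alg A)$ is not a cover of $\VV(\alg K_4)$'' holds \emph{exactly when} $\alg A\cong K_0(\alg 2\oplus\alg D)$. One direction is fine, but the converse is false: if $\alg A=K(\alg 2\oplus\alg D)$ with $\alg D$ nontrivial, then $\VV(\alg K_4)\subsetneq\VV(\alg K_4)\vee\VV(\alg K_8)\subsetneq\VV(\alg A)$ (the latter strict because $\alg A$ is subdirectly irreducible of size $\ge 9$ and hence not in $\HH\SU(\{\alg K_4,\alg K_8\})$), so $\VV(\alg A)$ is \emph{not} a cover of $\VV(\alg K_4)$ either. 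Thus ``not a cover of $\VV(\alg K_4)$'' by itself does not force $\alg A\cong K_0(\ldots)$, and your reduction as written has a gap in the forward direction.

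The fix is immediate and uses the \emph{other} half of the hypothesis: if $\alg A=K(\alg 2\oplus\alg D)$ with $\alg D$ nontrivial, the same chain $\VV(\alg K_8)\subsetneq\VV(\alg K_4)\vee\VV(\alg K_8)\subsetneq\VV(\alg A)$ shows $\VV(\alg A)$ is not a cover of $\VV(\alg K_8)$. Hence already ``$\VV(\alg A)$ covers $\VV(\alg K_8)$'' forces $\alg A\cong K_0(\alg 2\oplus\alg D)$ (and then the ``not a cover of $\VV(\alg K_4)$'' clause is automatic from the Corollary). Replace your biconditional with this observation and the reduction goes through cleanly; everything else in your argument stands.
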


Therefore if $\alg D\in\mathsf{CIRL}$ is finite, subdirectly irreducible and generates a cover of $\VV(\alg 2)=\mathsf{GBA}$ in $\Lambda(\mathsf{CIRL})$, the lattice of subvarieties of $\VV(K(\alg 2\oplus \alg D))$ will be as in Figure \ref{stlattice}.

\begin{figure}[htbp]
\begin{center}
\begin{tikzpicture}
\draw[fill] (0,0) circle [radius=0.05];
\draw[fill] (0,1) circle [radius=0.05];
\draw[fill] (1.5,1.5) circle [radius=0.05];
\draw[fill] (3,2) circle [radius=0.05];
\draw[fill] (0,1+1) circle [radius=0.05];
\draw[fill] (1.5,1.5+1) circle [radius=0.05];
\draw[fill] (3,2+1) circle [radius=0.05];
\draw[fill] (1.5,1.5+2) circle [radius=0.05];
\draw[fill] (3,2+2) circle [radius=0.05];
\draw[fill] (3,5) circle [radius=0.05];
\draw (0,0) -- (0,1) -- (0,2);
\draw (0,1) -- (1.5,1.5) -- (3,2);
\draw (0,1+1) -- (1.5,1.5+1) -- (3,2+1);
\draw (1.5,1.5+2) -- (3,2+2);
\draw (0+1.5,0+1.5) -- (0+1.5,1+1.5) -- (0+1.5,2+1.5);
\draw (0+3,0+2) -- (0+3,1+2) -- (0+3,2+2);
\draw (3,4) -- (3,5);
\node[right] at (0,0) {\tiny $\mathsf{T}$};
\node[left] at (0,1) {\tiny $\mathsf{PKL} =\VV(\alg K_3)$};
\node[left] at (0,2) {\tiny $K(\mathsf{BA}) = \VV(\alg K_4)$};
\node[right] at (1.5,1.5) {\tiny $\VV(\alg K_8)$};
\node[right] at (3,2) {\tiny $\VV(K_0(\alg 2\oplus \alg D))$};
\node[left] at (1.5,3.5) {\tiny $\VV(\alg K_9)$};
\node[above] at (3,5) {\tiny $\VV(K(\alg 2\oplus \alg D))$};
\end{tikzpicture}
\end{center}
\caption{$\Lambda(\VV(K(\alg 2\oplus \alg D)))$\label{stlattice}}
\end{figure}

\medskip

For the case of non-finitely generated almost minimal varieties, an example will be $\VV(K_0(\alg 2\oplus \alg C_\o))$. If $\alg A\in\mathsf{CIRL}$ is infinite and generates an almost minimal variety in $\Lambda(\mathsf{CIRL})$ different from cancellative hoops $\mathsf{CH}$, then $\VV(K_0(\alg 2\oplus \alg A))$ will be another example of the cover.
In Figure \ref{stlattice2} we consider the lattice of subvarieties of $\VV(K(\alg 2\oplus \alg C_\o),K(\alg 2\oplus \alg \L_2))$

\begin{figure}[htbp]
\begin{center}
\begin{tikzpicture}
\draw[fill] (0,0) circle [radius=0.05];
\draw[fill] (0,1) circle [radius=0.05];
\draw[fill] (1.5,1.5) circle [radius=0.05];
\draw[fill] (3,2) circle [radius=0.05];
\draw[fill] (-1,1.5) circle [radius=0.05];
\draw[fill] (0.5,2) circle [radius=0.05];
\draw[fill] (2,2.5) circle [radius=0.05];
\draw[fill] (0,1+1) circle [radius=0.05];
\draw[fill] (1.5,1.5+1) circle [radius=0.05];
\draw[fill] (3,2+1) circle [radius=0.05];
\draw[fill] (-1,1.5+1) circle [radius=0.05];
\draw[fill] (0.5,2+1) circle [radius=0.05];
\draw[fill] (2,2.5+1) circle [radius=0.05];
\draw[fill] (1.5,1.5+2) circle [radius=0.05];
\draw[fill] (3,2+2) circle [radius=0.05];
\draw[fill] (-1,1.5+2) circle [radius=0.05];
\draw[fill] (0.5,2+2) circle [radius=0.05];
\draw[fill] (2,2.5+2) circle [radius=0.05];
\draw[fill] (2,5.5) circle [radius=0.05];
\draw[fill] (3,5) circle [radius=0.05];
\draw (0,0) -- (0,1) -- (0,2);
\draw (0,1) -- (1.5,1.5) -- (3,2) -- (2,2.5) -- (0.5,2) -- (-1,1.5) -- (0,1);
\draw (0,1+1) -- (1.5,1.5+1) -- (3,2+1) -- (2,2.5+1) -- (0.5,2+1) -- (-1,1.5+1) -- (0,1+1);
\draw (1.5,1.5+2) -- (3,2+2) -- (2,2.5+2) -- (0.5,2+2) -- (-1,1.5+2);
\draw (0-1,0+1.5) -- (0-1,1+1.5) -- (0-1,2+1.5);
\draw (0+1.5,0+1.5) -- (0+1.5,1+1.5) -- (0+1.5,2+1.5);
\draw (0+3,0+2) -- (0+3,1+2) -- (0+3,2+2);
\draw (0+2,0+2.5) -- (0+2,1+2.5) -- (0+2,2+2.5);
\draw (0+.5,0+2) -- (0+.5,1+2) -- (0+.5,2+2);
\draw (0+.5,0+2) -- (0+1.5,1+0.5);
\draw (0+.5,0+2+1) -- (0+1.5,1+0.5+1);
\draw (0+.5,0+2+2) -- (0+1.5,1+0.5+2);
\draw (3,4) -- (3,5) -- (2,5.5) -- (2,4.5);
\node[right] at (0,0) {\tiny $\mathsf{T}$};
\node[left] at (0,1) {\tiny $\mathsf{PKL} =\VV(\alg K_3)$};
\node[left] at (0,2) {\tiny $K(\mathsf{BA}) = \VV(\alg K_4)$};
\node[right] at (1.5,1.5) {\tiny $\VV(\alg K_8)$};
\node[right] at (3,2) {\tiny $\VV(K_0(\alg 2\oplus \alg \L_2))$};
\node[left] at (1.5,3.5) {\tiny $\VV(\alg K_9)$};
\node[right] at (3,5) {\tiny $\VV(K(\alg 2\oplus \alg \L_2))$};
\node[left] at (-1,1.5) {\tiny $\VV(K_0(\alg 2\oplus \alg C_\o))$};
\node[left] at (-1,3.5) {\tiny $K(\mathsf{PA})=\VV(K(\alg 2\oplus \alg C_\o))$};
\node[above] at (2,5.5) {\tiny $\VV(K(\alg 2\oplus \alg C_\o),K(\alg 2\oplus \alg \L_2))$};
\end{tikzpicture}
\end{center}
\caption{$\Lambda(\VV(K(\alg 2\oplus \alg C_\o),K(\alg 2\oplus \alg \L_2)))$\label{stlattice2}}
\end{figure}

\section*{Conclusions and future work}

 One of the most interesting features of the Kalman construction (at least from our point of view) is that it allows us to explore previously unexplored parts of the lattice of subvarieties of $\mathsf{CRL}$, by {\em lifting} properties of {\em integral} commutative residuated lattices that we already know.  In fact Section \ref{S3.examples} of this paper and Section 6 of \cite{AglianoMarcos2020a} are completely devoted to the purpose and we believe that we have shown that such an enterprize is at least useful. In particular the combination of the Kalman construction with other well known construction, such as the ordinal sum or the disconnected rotation, seems to be a very powerful tool and deserves to be investigated more.

Another possibility is to concentrate on a specific variety, e.g.  $K(\mathsf{BL})$ and investigate its algebraic properties. What are its splitting algebras? What are its projective members? Is there a canonical representation for subdirectly irreducible algebras? We believe that these are all very interesting questions and we propose to investigate them further.

\providecommand{\bysame}{\leavevmode\hbox to3em{\hrulefill}\thinspace}
\providecommand{\MR}{\relax\ifhmode\unskip\space\fi MR }
\providecommand{\MRhref}[2]{%
  \href{http://www.ams.org/mathscinet-getitem?mr=#1}{#2}
}
\providecommand{\href}[2]{#2}

\end{document}